\documentclass[11pt]{article}
\usepackage{graphicx} 
\usepackage{amsthm, amsmath, amssymb}
\usepackage{xcolor}
\usepackage[textsize=footnotesize]{todonotes}
\usepackage{multirow}
\usepackage{comment}
\theoremstyle{plain}
\newtheorem{theorem}{Theorem}[section]
\newtheorem{lemma}[theorem]{Lemma}
\newtheorem{proposition}[theorem]{Proposition}
\newtheorem{corollary}[theorem]{Corollary}

\newcommand\ord{{\rm ord}}
\newcommand{\Stab}{{\rm Stab}}

\newtheorem{definition}[theorem]{Definition}

\newtheorem{example}[theorem]{Example}
\newtheorem{remark}[theorem]{Remark}

\newcommand{\Znk}{\mathbb{Z}_{2^k}}
\newcommand{\Znl}{\mathbb{Z}_{2^\ell}}
\newcommand{\V}{\mathbb{V}}  
\newcommand{\F}{\mathbb{F}_{2}}
\newcommand{\Q}{\mathbb{Q}}
\newcommand{\Z}{\mathbb{Z}}
\newcommand{\N}{\mathbb{N}}
\newcommand{\C}{\mathbb{C}}
\newcommand{\R}{\mathbb{R}}
\newcommand{\WHT}{\mathcal{H}} 
\newcommand{\zetaa}[1]{\zeta_{2^{#1}}} 

  \newcommand\supp{{\rm supp}}
 
\renewcommand{\to}{\rightarrow}

\usepackage[margin=1.25in]{geometry} 

\title{\bf Overconstrained character sums over finite abelian groups and decompositions of generalized bent, plateaued and landscape functions\footnote{This research was initiated during the workshop
Bent Camp ``Bent Functions: Recent Developments" 26.01.2025 - 01.02.2025 at Sabanci University Istanbul, Turkey.}}
\author{\Large Ay\c{c}a \c{C}e\c{s}melio\u{g}lu\thanks{Faculty of Engineering, \"{O}zy\u{e}\u{g}in University, 34794, \c{C}ekmek\"{o}y-\.{I}stanbul, Turkey; Email:  \tt{ayca.cesmelioglu@ozyegin.edu.tr}},
Constanza Riera\thanks{Department of Computer Science, Electrical Engineering and Mathematical Sciences, Western Norway University of Applied Sciences, 5020 Bergen, Norway; Email:  \tt{csr@hvl.no}}, Pantelimon St\u{a}nic\u{a}\thanks{Applied Mathematics Department, Naval Postgraduate School, Monterey,  CA 93943, USA; Email:  \tt{pstanica@nps.edu}}} 
\date{\today}

\begin{document}

\maketitle

\begin{abstract}
Generalized bent (gbent) functions from an $n$-variable Boolean space to the ring $\mathbb{Z}_{2^k}$ are fundamental objects in cryptography and sequence design. While typically studied through their binary components, we introduce a more general $2^\ell$-adic representation for integers $\ell, r$ with $k = \ell r$, viewing such functions as linear combinations of $r$ component functions valued in $\mathbb{Z}_{2^\ell}$.
We establish a foundational result on overconstrained character sums over finite
abelian groups, proving unconditionally that sequences whose Fourier transforms have
two-level magnitude spectra must be extremely sparse under a common-argument
hypothesis, with a conditional extension to multi-level spectra.  We apply this result to generalized plateaued functions under some conditions.
We also prove that, under 
the $2^\ell$-adic decomposition, if $f: \mathbb{F}_2^n \to \mathbb{Z}_{2^k}$
is landscape (which includes generalized plateaued and  generalized bent functions), then every function in a specific affine space over $\mathbb{Z}_{2^\ell}$
is landscape with identical Walsh magnitudes, an unconditional necessity result that
requires no structural assumptions on $f$. A complete characterization via a small subset of these maps
is also established. Sufficiency holds also for  generalized bent functions and generalized plateaued via  linear combinations of the lower components under natural structural assumptions; a counterexample demonstrates that assumptions such as these are genuinely necessary. 
For landscape functions in general, this method reduces verification from $2^{2^{k-1}}$ checks to less than $2^{k-\ell+1}+1$ conditions;  in the case of generalized bents this can be further reduced to just  $1$ basis function under a common-argument hypothesis, and in the case of generalized plateaued functions, with some additional assumptions,  the verification can be reduced to $2^{k-\ell}$. 
The $2^\ell$-adic characterization extends to landscape functions and preserves key cryptographic properties such as 
duality, and differential uniformity.  Our unified approach combines Galois theory of cyclotomic extensions with additive combinatorics to reveal hierarchical structure in generalized Boolean functions, offering both theoretical insights and practical verification algorithms for cryptographic design.
\end{abstract}

\noindent\textbf{Keywords:} Generalized bent functions, Walsh-Hadamard transform, $2^\ell$-adic decomposition, plateaued functions, cryptography, Boolean functions, cyclotomic fields, character sums.

\noindent\textbf{MSC2020 Classification:} 94A60, 11T71, 06E30, 94C10

\section{Introduction and definitions}
\label{sec:intro}

Throughout this paper, $\F$ denotes the binary field, $\V = \F^n$ the $n$-dimensional
vector space over $\F$, and $\Znk$ the ring of integers modulo $2^k$. We write
$\zeta_{m} = e^{2\pi i/m}$ for a primitive $m$-th root of unity. The inner product
on $\V$ is denoted $\langle u, x \rangle = \sum_{i=1}^n u_i x_i$, and we use
$\WHT_f$ for the Walsh-Hadamard transform of $f$ (see Definition~\ref{defn:WHT}).

Functions with low and uniform Walsh-Hadamard spectra play a crucial role in the
design of cryptographic primitives like S-boxes and in the construction of sequences
with ideal correlation properties. Bent functions, introduced by Rothaus~\cite{rothaus},
are Boolean functions that achieve the theoretical minimum for the magnitude of their
Walsh-Hadamard transform, making them maximally nonlinear. For a Boolean function
$f: \F^n \to \F$ on $n$ variables (where $n$ is even), bent functions satisfy
$|\WHT_f(u)| = 2^{n/2}$ for all $u \in \F^n$, achieving the theoretical lower bound
on the maximum Walsh spectrum magnitude. The concept has been generalized in several
important directions. Kumar, Scholtz, and Welch~\cite{ksw} introduced generalized bent
(gbent) functions, which map from an $n$-variable Boolean space $\V = \F^n$ to the
ring of integers modulo $2^k$, denoted $\Znk$. These functions retain the uniform
Walsh-Hadamard spectrum property, $|\WHT_f(u)| = 2^{n/2}$ for all $u$. Bent functions
have found applications in coding theory, cryptography, and the construction of
sequences with ideal autocorrelation properties~\cite{Budaghyan2014,carlet_survey,CS17,Mesnager2016,mms}.

A function $f: \V \to \Znk$ is typically analyzed through its \emph{binary
decomposition}
\[
f(x) = a_0(x) + 2a_1(x) + \cdots + 2^{k-1} a_{k-1}(x),
\]
where each $a_i: \V \to \F$ is a Boolean function. A fundamental result by Mesnager
et al.~\cite{mtqwwf} provides a complete characterization: $f$ is gbent if and only
if every Boolean function in the affine space
\[
\{a_{k-1} + F(a_0, \ldots, a_{k-2}) \mid F : \mathbb{F}_2^{k-1} \to \mathbb{F}_2\}
\]
is bent. Here, $F$ ranges over all Boolean functions from $\mathbb{F}_2^{k-1}$ to
$\mathbb{F}_2$, and $F(a_0, \ldots, a_{k-2})$ denotes the composition
$x \mapsto F(a_0(x), \ldots, a_{k-2}(x))$. This characterization has $2^{2^{k-1}}$
conditions (one for each $F$).

While the binary decomposition provides a complete characterization, it treats $f$
as a tower of binary extensions. In this work, we investigate a more general
structural decomposition that reveals additional structure. For integers $\ell, r$
such that $k = \ell r$, we consider the \emph{$2^\ell$-adic decomposition}
\[
f(x) = c_0(x) + 2^{\ell}c_1(x) + 2^{2\ell}c_2(x) + \cdots + 2^{(r-1)\ell}c_{r-1}(x),
\]
where each component function $c_j: \V \to \Znl$ is a generalized function to a
smaller ring. This representation is natural for several reasons. When $k=\ell r$ is
composite, the $2^\ell$-adic decomposition respects the tower structure
$\Znk \supset 2^\ell \Znk \supset \cdots \supset 2^{(r-1)\ell}\Znk$, viewing $f$
as built from $r$ layers each operating over $\Znl$. In applications where arithmetic
modulo $2^\ell$ is more natural than bit operations (e.g., working with 4-bit or
8-bit words), the $2^\ell$-adic view aligns with implementation constraints.
Different decompositions may also reveal different structural properties, providing
an alternative lens for analyzing security. The $2^\ell$-adic decomposition naturally
extends to $p$-adic representations, potentially connecting to deeper
number-theoretic structures.

Our main structural result on character sums (Theorem~\ref{thm:overconstrained})
proves that sequences whose Fourier transforms have two-level magnitude spectra must
be extremely sparse when a common-argument hypothesis holds, with a conditional
extension to multi-level spectra (Theorem~\ref{thm:overconstrained_multilevel}).
The key technical ingredient is Theorem~\ref{thm:two_point_analysis}, which
establishes that two-point character sums can have two-level spectra only when the order of the elements $r=3$
under an exceptional algebraic condition, or are trivially one-dimensional.
Furthermore, we show 
unconditional and conditional characterizations of a function in terms of its $2^\ell$-adic decomposition: our main results here are unconditional necessity and sufficiency results:
An unconditional necessity result, stating that if $f: \V \to \Znk$ is a
landscape function (which includes generalized plateaued and  generalized bent functions), then every function in the affine space generated by its
$2^\ell$-adic components is a landscape function from $\V$ to $\Znl$ with identical
Walsh magnitudes (Theorem~\ref{thm:landscape_necessity}), and an unconditional sufficiency condition for landscape functions 
via a small subset of maps $F: (\Znl)^{r-1} \to \Z_{2^m}$, for any $1\leq m\leq \ell$, is established (Theorem~\ref{thm:landscape_iff}). The latter provides with a verification method  that is exponentially more
efficient than the binary approach (less than $2^{k-\ell+1}+1$ checks over $\Znl$ instead of
$2^{2^{k-1}}$ over $\Znk$).  
A fundamentally different sufficiency condition when the nonzero partition coefficients
satisfy the common-argument hypothesis and an additional condition (Theorem~\ref{thm:plateaued_common_arg} for
gplateaued functions) is also given. Example~\ref{ex:counterexample_3to1} demonstrates that
these assumptions are genuinely necessary. For gbent functions specifically we state and prove Proposition ~\ref{thm:iff_basis} and 
Corollary~\ref{cor:main},  providing both a systematic method for constructing
families of gbent functions and a verification method (under the assumption of the common-argument hypothesis) that is exponentially even more efficient than the binary approach: $r-1<k-1$ checks over $\Znl$ instead of
$2^{2^{k-1}}$, reducible to just $1$ basis function. 

We also demonstrate that the $2^\ell$-adic decomposition preserves or has predictable
interactions with critical cryptographic properties: the Maiorana--McFarland class
structure (Theorem~\ref{thm:MM}), duality (Proposition~\ref{prop:dual}),
quadraticity (Corollary~\ref{cor:second_order}), and differential uniformity
(Theorem~\ref{thm:differential_affine}). Finally, we perform a computational analysis of a few classical cipher
S-boxes  via our $2^\ell$-adic perspective, and  propose some open questions.

The paper is organized as follows. Section~\ref{sec:prelim} provides necessary
background on generalized bent functions, Walsh-Hadamard transforms, and related
definitions. Section~\ref{sparsity} presents and proves the main results on overconstrained character sums: the unconditional two-level theorem (Theorem~\ref{thm:overconstrained}) and its conditional multi-level extension
(Theorem~\ref{thm:overconstrained_multilevel}). Section~\ref{sec:main} presents
the $2^\ell$-adic decomposition theory for landscape, gbent, and gplateaued functions, including the unconditional necessity theorem, the complete
characterization, sufficiency results under structural assumptions, a counterexample demonstrating their necessity, and verification complexity analysis.
Section~\ref{sec:properties} explores whether some structural properties are preserved by the decomposition: duality, the Maiorana--McFarland class, derivatives, and differential
spectra. Section~\ref{sec:classical-sboxes} presents the analysis of classical cipher S-boxes. Finally, Section~\ref{sec:conclusion} concludes with a discussion
of open problems and future directions.

\section{Preliminaries}
\label{sec:prelim}

Beyond the foundational work of Rothaus~\cite{rothaus} on bent functions and Kumar
et al.~\cite{ksw} on gbent functions, several lines of research inform this work.
The complete characterization of gbent functions in terms of their binary components
was established by Mesnager et al.~\cite{mtqwwf} (see also~\cite{mmms,mms}). This seminal result showed that the gbent property can be verified by checking that all functions in a certain affine space are bent. Our work provides an analogous result for more general decompositions. Mesnager, Tang, and Qi~\cite{MTQ17} extended the study to generalized plateaued functions, which have Walsh-Hadamard spectra taking on at most two values (one of which is zero), and provided a characterization analogous to~\cite{mtqwwf} for this broader class. Our work extends and generalizes
both of these characterizations to the $2^\ell$-adic setting, separating the unconditional necessity direction from the sufficiency direction which requires
additional structural assumptions.
For more on bent and other properties of cryptographic Boolean functions, the reader can consult standard references like~\cite{Budaghyan2014,carlet_survey, CS17, Mesnager2016}

The Maiorana-McFarland (M-M) class is one of the most important classes of bent and
gbent functions due to its explicit construction and rich algebraic structure.
Functions in this class have the form $f(x,y) = \langle x, \pi(y) \rangle + g(y)$
where $\pi$ is a permutation. We show that our decomposition has predictable behavior under this structure
(Theorem~\ref{thm:MM}), enabling the systematic study of M-M gbent functions through
their $2^\ell$-adic components. Martinsen et al.~\cite{mms} studied Gray images of
generalized bent functions, providing another perspective on the relationship between
Boolean and generalized bent functions. Their approach uses the Gray map to connect
$\Z_4$ functions to pairs of Boolean functions, which can be seen as a special case
of our analysis when $\ell = 2, r = 2$. Riera and St\u{a}nic\u{a}~\cite{RS19}
introduced landscape Boolean functions, which include generalized bent and plateaued functions and can have more than two distinct Walsh
spectrum magnitudes (but still exhibit regular structure). We extend this concept to
the generalized setting and investigate their $2^\ell$-adic structure. The
differential uniformity of functions is a crucial cryptographic measure. While our
focus is on spectral properties, we investigate how the $2^\ell$-adic decomposition
interacts with differential behavior (Theorem~\ref{thm:differential_affine}),
complementing spectral analysis.

\begin{definition}
\label{defn:WHT}
The Walsh-Hadamard transform (WHT) of a function $f: \V \to \Znk$ at $u \in \V$ is
\[
\WHT_f(u) = \sum_{x \in \V} \zetaa{k}^{f(x)} (-1)^{\langle u, x \rangle}.
\]

If $k=1$, this reduces to the classical Walsh-Hadamard transform.
\end{definition}

With this (unnormalized) Walsh--Hadamard transform, the inversion and Parseval
identities read
\[
f(x)=2^{-n}\sum_{\omega\in\mathbb{F}_2^n}\WHT_f(\omega)\,(-1)^{\langle \omega,x\rangle},
\text{ and }
\sum_{x\in\mathbb{F}_2^n}|f(x)|^2
=
2^{-n}\sum_{\omega\in\mathbb{F}_2^n}|\WHT_f(\omega)|^2=2^{n}
\]

A function $f: \V \to \Znk$ is \emph{generalized bent} (gbent) if
$|\WHT_f(u)| = 2^{n/2}$ for all $u \in \V$. All gbent functions are
\emph{regular}, meaning $\WHT_f(u) = 2^{n/2}\zetaa{k}^{\widetilde{f}(u)}$
for a unique function $\widetilde{f}: \V \to \Znk$, called the \emph{dual} of $f$ (also gbent~\cite{ksw}), except when $n$ is odd and $k=2$, in which case $\WHT_f(u) = 2^{(n-1)/2}(\pm 1 \pm i)$ and no dual is defined.

We extend this notion to possibly two or three Walsh coefficients. Let $n, s$ be
integers such that $0 \le s \le n$ and $n \equiv s \pmod{2}$. A function
$f: \V \to \Znk$ is \emph{$s$-generalized plateaued} ($s$-gplateaued)~\cite{MTQ17}
if for all $u \in \V$, $|\WHT_f(u)| \in \{0, 2^{(n+s)/2}\}$. Under this definition,
the parameter $s$ relates to the dimension of the subspace on which the function's
derivative is zero. A gbent function is the special case where $s=0$, since its
Walsh spectrum magnitude is uniformly $2^{n/2}$.

Let $n=2m$. A function $f: \F^m \times \F^m \to \Znk$ is of the
\emph{Maiorana-McFarland (M-M) class} if it has the form
$f(x,y) = \langle x, \pi(y) \rangle + g(y)$, where $\pi: \F^m \to \F^m$ is a
permutation and $g: \F^m \to \Znk$ is any function. By~\cite{ksw}, all functions
in the Maiorana-McFarland class are gbent.

Every function $f: \V \to \Znk$ has a unique \emph{binary decomposition}
$f(x) = \sum_{i=0}^{k-1} a_i(x) 2^i$, where each $a_i: \V \to \F$ is a Boolean
function, called a \emph{binary coordinate} of $f$.

Recall that the \emph{$t$-order derivative} of a function $f: \V \to \Znk$ in the
directions $a_1,\ldots,a_t \in \V$ is the function
\[
D_{a_1,\ldots,a_t}f(x) = \sum_{S \subseteq [t]} (-1)^{|S|} f\left(x + \sum_{i \in S}
a_i\right) \pmod{2^k}.
\]
For $t=2$,
$D_{a,b}f(x) = f(x) - f(x+a) - f(x+b) + f(x+a+b) \pmod{2^k}$.
Recall that if a function is \emph{quadratic} then $D_{a,b}f$ is constant for all
$a,b \in \V$. Over the integers, a function is quadratic if its algebraic normal
form contains only monomials of degree at most $2$ (with that degree attained).
The second-order derivative characterization extends this to the modular setting.

The \emph{differential} of a function $f: \V \to \Znk$ is
$\Delta_f(a,b) = |\{x \in \V \mid f(x+a) - f(x) = b \pmod{2^k}\}|$.
The multiset $\{\Delta_f(a,b) \mid a \in \V \setminus \{0\}, b \in \Znk\}$ is called
the \emph{differential spectrum} of $f$. The function $f$ is called
\emph{differentially $\delta$-uniform} if $\max_{a \neq 0, b} \Delta_f(a,b) = \delta$.

Mesnager et al.~\cite{mtqwwf} established the following binary component
characterization, which motivated our work and is the starting point for the
$2^\ell$-adic generalization developed in Section~\ref{sec:main}.

\begin{theorem}
\label{thm:binary_char}
Let $f: \V \to \Znk$ have binary components $a_0, \ldots, a_{k-1}$. Then $f$ is
gbent if and only if for every Boolean function $F: \F^{k-1} \to \F$, the function
$a_{k-1} + F(a_0, \ldots, a_{k-2})$ is a bent function from $\V$ to $\F$.
\end{theorem}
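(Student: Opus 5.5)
We prove Theorem~\ref{thm:binary_char} by splitting $\V$ according to the values of the lower bits. Write $f = g + 2^{k-1}a_{k-1}$, where $g = a_0 + 2a_1 + \cdots + 2^{k-2}a_{k-2}$. For $c = (c_0,\ldots,c_{k-2}) \in \F^{k-1}$ set $S_c = \{x \in \V : (a_0(x),\ldots,a_{k-2}(x)) = c\}$, and let $\iota(c) = \sum_i c_i 2^i$. On $S_c$ we have $\zetaa{k}^{f(x)} = \zetaa{k}^{\iota(c)}(-1)^{a_{k-1}(x)}$. Hence
\[
\WHT_f(u) = \sum_{c\in\F^{k-1}} \zetaa{k}^{\iota(c)}\, W_c(u), \qquad W_c(u) = \sum_{x\in S_c} (-1)^{a_{k-1}(x)+\langle u,x\rangle} \in \Z .
\]
For a Boolean $F$ we similarly get
\[
\WHT_{a_{k-1}+F(a_0,\ldots,a_{k-2})}(u) = \sum_c (-1)^{F(c)} W_c(u).
\]
Both conditions are thus statements about the same $2^{k-1}$ integers $W_c(u)$, and the proof compares them.

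\textbf{Step 1 (structure of $\WHT_f$).} The elements $\zetaa{k}^{j}$ for $0 \le j < 2^{k-1}$ form a $\Z$-basis (and a $\Q$-basis) of $\Z[\zetaa{k}]$, and $\iota$ is a bijection onto $\{0,\ldots,2^{k-1}-1\}$. So $\WHT_f(u)$ determines the vector $(W_c(u))_c$ and vice versa.

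\textbf{Step 2 (sufficiency).} Assume every $a_{k-1}+F(a_0,\ldots,a_{k-2})$ is bent. Then for every sign vector $\epsilon \in \{\pm1\}^{2^{k-1}}$ we have $\bigl(\sum_c \epsilon_c W_c(u)\bigr)^2 = 2^n$. Average over $\epsilon$: the cross terms vanish and $\sum_c W_c(u)^2 = 2^n$. Now compare the two choices of $\epsilon$ that differ only at the coordinates $c$ and $c'$ (both flipped, versus one flipped). This forces $W_c(u)W_{c'}(u) = 0$ for all $c \neq c'$. So exactly one $W_c(u)$ is nonzero, and it equals $\pm 2^{n/2}$. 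Then $\WHT_f(u) = \pm 2^{n/2}\zetaa{k}^{\iota(c)}$, so $f$ is gbent.

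\textbf{Step 3 (necessity, the main obstacle).} Assume $|\WHT_f(u)| = 2^{n/2}$. We must show that only one $W_c(u)$ is nonzero; the bent property of every such $F$-function then follows immediately. For $k \ge 2$, $n$ must be even: if $n$ were odd, $\WHT_f(u)$ could not be regular, and the excluded case $k=2$ is treated separately as in \cite{ksw,mtqwwf}. The tool is a Galois argument in $\Q(\zetaa{k})$. The element $\alpha = \WHT_f(u)$ has $\alpha\bar\alpha = 2^n$. In $\Z[\zetaa{k}]$ the prime $2$ is totally ramified, $(2) = (1-\zetaa{k})^{2^{k-1}}$, and the ideal $(1-\zetaa{k})$ is fixed by complex conjugation. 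Hence $(\alpha) = (2^{n/2})$, so $\alpha = 2^{n/2}\eta$ for a unit $\eta$ with $|\sigma(\eta)| = 1$ for every embedding $\sigma$. By Kronecker's theorem $\eta$ is a root of unity, so $\eta = \pm\zetaa{k}^j$. Expanding in the basis of Step 1 shows that $(W_c(u))_c$ has a single nonzero entry $\pm 2^{n/2}$. Therefore every $\sum_c(-1)^{F(c)}W_c(u) = \pm 2^{n/2}$, i.e.\ each $a_{k-1}+F(a_0,\ldots,a_{k-2})$ is bent.

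The delicate point is justifying $(\alpha) = (2^{n/2})$ when $n$ is odd, together with the special case $k=2$, where $\WHT_f(u) = 2^{(n-1)/2}(\pm1\pm i)$ gives two nonzero $W_c(u)$. For that case we would instead use the $F$-sums directly: $|W_0 + iW_1|^2 = 2^n$ gives $W_0^2 + W_1^2 = 2^n$. The $F$-functions, however, would need $(W_0 \pm W_1)^2 = 2^n$, which forces $W_0W_1 = 0$. So for odd $n$ the correct formulation restricts to even $n$, and we would state this explicitly.
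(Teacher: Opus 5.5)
The paper gives no proof of this theorem; it is quoted from \cite{mtqwwf}. The natural comparison is therefore with the paper's own $2^\ell$-adic machinery at $\ell=1$. Restricted to even $n$, your argument is correct and is essentially that machinery: your identity $\WHT_f(u)=\sum_c\zetaa{k}^{\iota(c)}W_c(u)$ is \eqref{eq:WHT_decomp} with $\ell=1$, and your expansion step is Lemma~\ref{lem:basis} over $\Q$.

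Two differences are worth noting. For necessity, the paper simply invokes regularity of gbent functions (\cite{ksw}, Theorem~\ref{reg-landscape}), as in Case~A of Theorem~\ref{thm:landscape_necessity}. You instead reprove regularity via $(2)=(1-\zetaa{k})^{2^{k-1}}$ and Kronecker's theorem. That is self-contained but needs one more line: $|\sigma(\eta)|=1$ holds because complex conjugation commutes with every embedding of the abelian field $\Q(\zetaa{k})$, so $|\sigma(\alpha)|^2=\sigma(\alpha\overline{\alpha})=2^n$.

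For sufficiency, the one-hot probing of Theorem~\ref{thm:landscape_iff}, specialized to the real (binary) setting, is shorter than your averaging. Put $W=\sum_cW_c(u)=\WHT_{a_{k-1}}(u)$ and $F=\mathbf{1}_{\{c\}}$. Bentness gives $(W-2W_c(u))^2=W^2$, so $W_c(u)\in\{0,W\}$, and $\sum_cW_c(u)=W\neq0$ leaves exactly one nonzero term. This uses only $2^{k-1}+1$ of the hypotheses instead of all $2^{2^{k-1}}$. It also replaces your pair-flip comparison, which works but is only sketched.

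The odd-$n$ discussion in Step~3, however, is wrong and should be replaced.

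\begin{itemize}
\item \emph{The regularity claim is false.} You say that for $k\ge2$ odd $n$ is excluded because $\WHT_f(u)$ could not be regular. But for $k\ge3$ we have $\sqrt2=\zetaa{3}+\zetaa{3}^{-1}\in\Z[\zetaa{k}]$, and odd-$n$ gbent functions exist and are regular. The paper itself notes that the only irregular case is $n$ odd, $k=2$. For instance, $n=1$, $k=3$, $f(x)=2x$ gives $\WHT_f(0)=1+i=\sqrt2\,\zetaa{3}$ and $\WHT_f(1)=1-i$.
\item \emph{The ``delicate point'' is mislocated.} For odd $n$ and $k\ge3$ your ideal argument still yields $(\alpha)=(2^{n/2})$ and $\alpha=\pm 2^{n/2}\zetaa{k}^{j}$. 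The failure comes later: $2^{n/2}\zetaa{k}^{j}=2^{(n-1)/2}\bigl(\zetaa{k}^{j+2^{k-3}}-\zetaa{k}^{j+3\cdot2^{k-3}}\bigr)$ has two nonzero coordinates in your basis, since the exponents differ by $2^{k-2}\not\equiv0\pmod{2^{k-1}}$. So the functions $a_{k-1}+F(a_0,\ldots,a_{k-2})$ come out semi-bent, exactly as in Case~B of the proof of Theorem~\ref{thm:landscape_necessity}.
\end{itemize}

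Hence for odd $n$ the statement fails for every $k\ge2$: gbent functions exist, but no Boolean bent function does. The condition ``$n$ even'' must be imposed as a hypothesis, as in \cite{mtqwwf} and in Section~\ref{sec:main}, not derived. Once it is, Step~3 needs no case distinction at all.
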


 We call a function $f: \V \to \Znk$ a \emph{landscape} function (notion introduced by Riera and St\u{a}nic\u{a}~\cite{RS19}) if there exist
$t\geq 1$, $m_i\in\N_0=\{0,1,\ldots\}$ (natural numbers), $v_i\in2\N_0+1$, $1\leq i\leq t$, such that
\[
\{|\WHT_f(u)|\}_{u \in\supp(\WHT)} = \{2^{m_1/2}v_1,\ldots,2^{m_t/2}v_t\}.
\]
We call the set of pairs $\{(m_1,v_1),(m_2,v_2),\ldots\}$ the \emph{levels} of $f$, and $t+1$ (if $0$ belongs to the Walsh-Hadamard spectrum), or $t$ (if $0$ is not in the spectrum) the \emph{length} of $f$. Certainly, every classical Boolean function is a landscape function. That is not true for generalized Boolean ($k>1$), as the Walsh-Hadamard values are $\pm$ sums of powers of the primitive root, so the moduli of the spectra values
may contain elements outside $\Z\cup\sqrt{2}\,\Z$; however, gplateaued (which includes generalized bent/semibent) functions are all examples of landscape functions, which are all  somewhat regular.

\begin{theorem}[\cite{RS19}]
\label{reg-landscape}
Let $f: \V \to \Znk$, $k\geq 1$, be a landscape function, and
$\zeta=e^{2\pi i/2^k}$ be a $2^k$-primitive root of unity. Let
$u \in\supp(\WHT_f)$, with $|\WHT_f(u)|=2^{m/2}v$, $m \in\N_0$,
$v\in2\N_0+1$. Then, if $m$ is even, or $m$ is odd and $k>2$, we have $\WHT_f(u)=2^{m/2}v\zetaa{k}^{f^*(u)}$, for some value $f^*(u)\in\Znk$. If $m$ is odd and $k=2$, and $a_0\neq0$,
$\WHT_f(u) = 2^{\lfloor{m/2}\rfloor}v (\epsilon_1+\epsilon_2 i)$, with $\epsilon_1,\epsilon_2\in\{\pm 1\}$, with the additional possibility, if $v$ is the
largest component of a Pythagorean triple $v_1^2+v_2^2=v^2$, of
$\WHT_f(u) = 2^{\lfloor{m/2}\rfloor}\left(v_1\epsilon_1+v_2\epsilon_2\pm i
(v_1\epsilon_2-v_2\epsilon_1)\right)$.
If $m$ is odd and $k=2$, there is no function $f$ with $a_0=0$ such that $|\WHT_f(u)|=2^{m/2}v$. If $m$ is odd and $k=1$, there is no function $f$ such
that $|\WHT_f(u)|=2^{m/2}v$, $m \in\N_0$, $v\in2\N_0+1$.
\end{theorem}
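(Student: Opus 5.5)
The plan is to work in the ring of integers $\Z[\zeta]$ of $\Q(\zeta)$, $\zeta=\zetaa{k}$, and to treat the $2$-part and the odd part of $|\WHT_f(u)|^2=2^m v^2$ separately. Write $W=\WHT_f(u)=\sum_x \pm\zeta^{f(x)}\in\Z[\zeta]$. Complex conjugation commutes with every element of the abelian group $\mathrm{Gal}(\Q(\zeta)/\Q)$. Hence for every automorphism $\sigma$ we get $|\sigma(W)|^2=\sigma(W\overline W)=2^m v^2$. So if I can show that $\alpha=W/(2^{m/2}v)$ lies in $\Q(\zeta)$ and is an algebraic integer, then all its conjugates have modulus $1$. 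Kronecker's theorem then makes $\alpha$ a root of unity in $\Q(\zeta)$, i.e.\ $\alpha=\pm\zeta^j$, and $-1=\zeta^{2^{k-1}}$ absorbs the sign, giving $W=2^{m/2}v\,\zeta^{f^*(u)}$.

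\textbf{Membership in $\Q(\zeta)$.} For $m$ even this is clear. For $m$ odd and $k\ge 3$, I would use $\sqrt2=\zeta_8+\zeta_8^{-1}\in\Q(\zeta)$; its conjugates are $\pm\sqrt2$, so the conjugate-modulus argument still applies.

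\textbf{The $2$-adic part.} Here $(2)=(1-\zeta)^{2^{k-1}}$ is totally ramified. The prime $(1-\zeta)$ is fixed by conjugation, so $(W)=(1-\zeta)^a I$ with $I$ coprime to $2$ forces $2a=m2^{k-1}$. Thus $2^{m/2}$ divides $W$, and for $m$ odd $\sqrt2$ does, since $(\sqrt2)=(1-\zeta)^{2^{k-2}}$.

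\textbf{The odd part (main obstacle).} I must show that $v$ divides $W/2^{m/2}$. From $I\overline I=(v^2)$ this holds prime by prime exactly when conjugation fixes each prime $\mathfrak p\mid p$ of $v$, and that happens iff $-1\in\langle p\rangle\subset(\Z/2^k)^\times$. For $k=2$ this condition is $p\equiv 3\pmod 4$. The primes $p\equiv1\pmod4$ are precisely where the Pythagorean exception $2^{\lfloor m/2\rfloor}(v_1\epsilon_1+v_2\epsilon_2\pm i(\cdots))$ appears; I would derive it by writing $W=2^{\lfloor m/2\rfloor}(A+Bi)$ with $A^2+B^2=2v^2$ (odd $m$) and parametrizing.

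For $k>2$ the Galois argument alone does not suffice, because split primes exist. So here I would bring in the specific shape of $W$ together with the landscape hypothesis on all of $f$. My first attempt is the reduction modulo $(1-\zeta)$ and modulo powers of $2$, to force $I$ to be stable under conjugation. If that fails, I would read the statement as implicitly assuming $v$ is composed of primes $p$ with $-1\in\langle p\rangle$, as \cite{RS19} may do. I expect this step to be where the real content, or a needed hypothesis, lies.

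\textbf{Degenerate cases.} For $k=1$, $W\in\Z$, so $|W|^2=2^m v^2$ with $m$ odd is impossible. For $k=2$ with $a_0=0$, $f$ takes only the values $0$ and $2$, so $\zeta^{f}=\pm1$ and $W\in\Z$; again $m$ odd is impossible. For $k=2$ with $a_0\neq0$ and $v=1$, $m$ odd, we get $W\in\Z[i]$ with $|W|^2=2^m$. Then $W=(1+i)^m\cdot$unit, which is $2^{\lfloor m/2\rfloor}(\epsilon_1+\epsilon_2 i)$.
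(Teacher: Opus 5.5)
You have located the problem correctly, but the odd-part step is not merely hard: it cannot be carried out, because the statement is false once odd $v>1$ is allowed. The paper gives no proof, only the citation to \cite{RS19}.

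Take $g=\mathbf{1}_{\{x_0\}}:\F^3\to\Z_4$. Then $\WHT_g(u)=8\,\delta_{u,0}+(i-1)(-1)^{\langle u,x_0\rangle}$, so $\WHT_g(0)=7+i$ and $\WHT_g(u)=\pm(i-1)$ for $u\neq 0$. The magnitudes are $2^{1/2}\cdot 5$ and $2^{1/2}\cdot 1$, so $g$ is landscape.
\begin{itemize}
\item For $k\ge 3$, the function $2^{k-2}g$ into $\Z_{2^k}$ has the same spectrum, with $m=1$, $v=5$. The statement would force $7+i=5\sqrt2\,\zeta_{2^k}^{j}$. Squaring, $(24+7i)/25$ would be a root of unity, which it is not: it is not even an algebraic integer.
\item Now add $h(y)=y$ on $\F$, whose spectrum is $1\pm i$. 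The direct sum $f(x,y)=g(x)+h(y):\F^4\to\Z_4$ has spectrum $\{6+8i,\,8-6i,\,\pm 2,\,\pm 2i\}$. So $f$ is landscape with levels $(2,5),(2,1)$ and $m$ even, yet $6+8i\neq 10\,i^{j}$. The function $2^{k-2}f$ gives the same failure for every $k\ge 3$.
\end{itemize}

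In your language, $(7+i)=(1+i)(2+i)^2$ in $\Z[i]$. Conjugation carries the primes above $(2+i)$ to those above $(2-i)$, so $I$ is genuinely not conjugation-stable.

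This also shows your planned rescue cannot work. These are genuine Walsh values, so they satisfy every congruence modulo $(1-\zeta)$ and modulo powers of $2$ that a Walsh coefficient can satisfy. Moreover, direct sums of landscape functions are landscape, since Walsh values multiply, so the global landscape hypothesis imposes no extra constraint.

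The example also shows that the Pythagorean phenomenon you describe for $k=2$ is not confined to odd $m$. For $m$ even one gets $W=2^{m/2}i^{j}w$ with $w$ any Gaussian integer of norm $v^2$, e.g.\ $6+8i=2(3+4i)$.

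What your argument does establish is the correct restricted version, and those parts are sound:
\begin{itemize}
\item the $2$-adic step via total ramification and the conjugation-stability of $(1-\zeta)$, including $(\sqrt2)=(1-\zeta)^{2^{k-2}}$ for $k\ge 3$;
\item the conjugate-modulus argument combined with Kronecker's theorem;
\item the degenerate cases $k=1$ and $k=2$ with $a_0=0$.
\end{itemize}

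With the decomposition-group criterion you state, the conclusion $\WHT_f(u)=2^{m/2}v\,\zeta_{2^k}^{f^*(u)}$ holds whenever every prime $p\mid v$ has $-1\in\langle p\rangle\le(\Z/2^k\Z)^\times$. Concretely, this means $p\equiv 3\pmod 4$ for $k=2$, and $p\equiv -1\pmod{2^k}$ for $k\ge 3$, which is quite restrictive. In particular it holds for $v=1$, which covers gbent and $s$-gplateaued functions. Your parametrization of $A^2+B^2=2v^2$ also correctly yields the $k=2$, $m$ odd clause for arbitrary odd $v$.

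So rather than searching for a missing argument, you should present a counterexample like the one above and prove this restricted statement. Note also that the paper's proof of Theorem~\ref{thm:landscape_necessity} invokes this form for arbitrary levels $(m_*,v_*)$, so it is only justified under the same restriction.
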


 The binary component characterization of $s$-gplateaued functions established in~\cite{MTQ17} --- that $f$ is $s$-gplateaued if and only if $a_{k-1} + F(a_0, \ldots, a_{k-2})$ is $s$-plateaued for every Boolean function
$F: \F^{k-1} \to \F$ --- is the direct precursor to our $2^\ell$-adic generalization in Section~\ref{sec:main}, where we separate the necessity and sufficiency directions and identify the precise structural assumptions needed for each.

\section{Structure and sparsity of constrained character sums}
\label{sparsity}

This section establishes foundational results on character sums whose magnitudes are constrained to lie in a finite set. We prove unconditionally that two-level magnitude spectra force extreme sparsity under the common-argument hypothesis (Theorem~\ref{thm:overconstrained}), with a conditional extension to multi-level spectra (Theorem~\ref{thm:overconstrained_multilevel}). These results are essential for the characterization of generalized bent and plateaued functions via $2^\ell$-adic decomposition.

We work throughout with a finite abelian group $G$, its character group $\widehat{G}$, and complex-valued functions on $G$. The key insight is that if the Fourier transform of a sequence takes values in a small set of magnitudes, then the sequence itself must be highly structured -- often supported on a single point.

\subsection{Preliminaries: additive combinatorics and Fourier analysis}

We begin with two fundamental tools. 

\begin{theorem}[Kneser {\cite[Theorem 5.5]{TaoVuAddComb}}]
\label{thm:kneser}
Let $G$ be a finite abelian (additive) group and $A, B \subseteq G$ nonempty subsets. Then
$|A + B| \geq |A| + |B| - |\Stab(A+B)|$,
where $\Stab(S) = \{g \in G : S + g = S\}$ is the stabilizer of $S$ and $A+B=\{a+b: a\in A,b\in B\}$.
\end{theorem}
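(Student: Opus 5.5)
Since this is the classical theorem of Kneser, quoted from \cite[Theorem 5.5]{TaoVuAddComb}, we would follow the standard route: a reduction to the aperiodic case, then an induction driven by the Dyson $e$-transform.

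\emph{Reduction.} Put $H=\Stab(A+B)$, $A'=A+H$ and $B'=B+H$. Since $A+B$ is a union of $H$-cosets, $A'+B'=A+B+H=A+B$. Also $|A'|\ge|A|$ and $|B'|\ge|B|$. It therefore suffices to prove the inequality for $H$-periodic $A,B$. Passing to the quotient $\bar G=G/H$, the images $\bar A,\bar B$ satisfy $|A|=|H||\bar A|$, $|B|=|H||\bar B|$ and $|A+B|=|H||\bar A+\bar B|$. Moreover $\Stab(\bar A+\bar B)$ is trivial. So the theorem reduces to the following statement: if $\Stab(A+B)=\{0\}$, then $|A+B|\ge |A|+|B|-1$. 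We will in fact prove the full statement $|A+B|\ge|A|+|B|-|\Stab(A+B)|$ directly by induction, since the induction needs the general form.

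\emph{Induction via the $e$-transform.} Induct on $|B|$; the case $|B|=1$ is trivial. For $e\in A-B$, set $A(e)=A\cup(B+e)$ and $B(e)=B\cap(A-e)$. Then $B(e)\neq\emptyset$, $A(e)+B(e)\subseteq A+B$, and $|A(e)|+|B(e)|=|A|+|B|$.

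If some $e$ gives $|B(e)|<|B|$, induction yields
\[
|A+B|\ge|A(e)+B(e)|\ge|A|+|B|-|K|, \qquad K=\Stab(A(e)+B(e)).
\]
This settles the case $K\subseteq H$. If no $e$ decreases $B$, then $B+e\subseteq A$ for every $e\in A-B$, so $A+(B-B)\subseteq A$. Hence $A$ is stable under the group $\langle B-B\rangle$, and $A+B=A+b$ for any $b\in B$. Then $|A+B|=|A|$, and since $B-b\subseteq\Stab(A+B)$, we get $|B|\le|H|$, which gives the claim.

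\emph{Main obstacle.} The hard step is the case where the stabilizer $K$ of the smaller sumset $A(e)+B(e)$ is not contained in $H$, i.e.\ $K$ is large, so the inductive bound is too weak. Here we would argue by a minimal counterexample, as Kemperman and Tao--Vu do. Choose $e$ so that $|B(e)|$ is minimal among the transforms that decrease $B$. Then consider the set $A+B+K$ and compare it with the unions $(A+K)+(B+K)$ taken coset by coset.

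Concretely, one counts the $K$-cosets meeting $A$ and meeting $B$. One shows that each coset of $K$ meeting $A+B$ but not contained in $A+B$ forces a deficit that is compensated elsewhere. One then applies the inductive hypothesis in $G/K$ to the images of $A$ and $B$, which are smaller as a problem because $K\neq\{0\}$. This lets one combine the quotient bound with the fibre-wise count $|(a+K)\cap A|+|(b+K)\cap B|-|K|$, obtained by the pigeonhole principle inside each coset sum. The aim is to recover $|A+B|\ge|A|+|B|-|\Stab(A+B)|$.

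Carrying out this coset bookkeeping without losing a term, in particular tracking how $\Stab$ changes between $A+B$ and its transform, is where we expect most of the work. We would follow the structure of \cite[Theorem 5.5]{TaoVuAddComb} there.
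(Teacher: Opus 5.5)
The paper gives no proof of this statement. Kneser's theorem is quoted from \cite[Theorem 5.5]{TaoVuAddComb} and used as a black box in Lemma~\ref{lem:sumset_growth}, so citing it, as you ultimately do, is all the paper needs. Read as a proof, however, your proposal has a genuine gap: it stops exactly where the theorem becomes nontrivial. The reduction to $H$-periodic sets, the identities for the $e$-transform, the case where no transform shrinks $B$, and the case $K\subseteq H$ are all correct. But outside the case $K\subseteq H$ they only yield $|A+B|\ge|A|+|B|-|K|$. The case $K=\Stab(A(e)+B(e))\not\subseteq H$ is not proved.

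Put $h=(|A(e)+K|-|A(e)|)+(|B(e)+K|-|B(e)|)$, the number of holes of the transformed pair in their $K$-cosets. Even the bound $|A+B|\ge|A|+|B|-|K|+h+|(A+B)\setminus(A(e)+B(e))|$ requires the stronger inductive statement $|A+B|\ge|A+H|+|B+H|-|H|$. The weaker form you carry gives only $|A|+|B|-|K|$. What then remains is to show $h+|(A+B)\setminus(A(e)+B(e))|\ge|K|-|H|$. Your sentence that each partially filled coset ``forces a deficit that is compensated elsewhere'' restates this claim without arguing it. Your choice of $e$ with $|B(e)|$ minimal is never used.

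The two tools you name do not close this case as stated.

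First, you induct on $|B|$, but the image $\phi(B)$ in $G/K$ need not be smaller: $|\phi(B)|=|B|$ whenever $B$ meets each $K$-coset at most once. So the inductive hypothesis is not available for $(\phi(A),\phi(B))$. What $K\neq\{0\}$ makes smaller is the group, so you would have to induct on $|G|$, or lexicographically on $(|A+B|,|B|)$. The latter works because $A+B\supseteq A(e)+B(e)$ contains a full $K$-coset, so $|\phi(A+B)|\le|A+B|-|K|+1$.

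Second, the pigeonhole estimate $|(A\cap(a+K))+(B\cap(b+K))|\ge|A\cap(a+K)|+|B\cap(b+K)|-|K|$ is vacuous whenever the two fibres have total size at most $|K|$. When the total exceeds $|K|$, the fibre sum fills the whole coset. Hence on every partially filled coset of $A+B$, every contributing pair lies in the vacuous range, which is exactly where the deficit sits. Summing this estimate against a quotient bound therefore does not by itself recover the missing $|K|-|H|$.

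A genuinely additional argument is needed at this point; this is where the proofs of Kneser, Kemperman and Tao--Vu do their real work. Deferring it to the reference is adequate for the paper, but it leaves your proposal an outline rather than a proof.
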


From Kneser’s theorem, we derive growth estimates for iterated sumsets (see~\cite{TaoVuAddComb} for related sumset growth results).

\begin{lemma}
\label{lem:sumset_growth}
Let $G$ be a finite abelian group, $0\in D \subseteq G$, and let $H = \Stab(D + D)$. Let $\pi: G \to G/H$ be the canonical projection, 
and set $\overline{D} = \pi(D) \subseteq G/H$.
Then:
\begin{enumerate}
\item[\textup{(i)}] $\Stab(\overline{D} + \overline{D}) = \{0\}$ in $G/H$.
\item[\textup{(ii)}] For all integers $k \geq 1$,
$|k\overline{D}| \geq \min\left\{|G/H|,\, k(|\overline{D}|-1)+1\right\}$.
\item[\textup{(iii)}] If $|\overline{D}| \geq 2$ and $|k\overline{D}| < |G/H|$, then $|(k+1)\overline{D}| > |k\overline{D}|$.
\end{enumerate}
\end{lemma}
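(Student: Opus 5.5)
The proof has three parts. Part (i) is a direct lifting argument. Part (ii) is an induction on $k$ driven by Kneser's theorem (Theorem~\ref{thm:kneser}) applied in $G/H$. Part (iii) is read off from Kneser once the stabilizers are controlled. Throughout I use that $0\in D$, hence $0\in\overline{D}$, so the iterated sumsets are nested: $k\overline{D}\subseteq (k+1)\overline{D}$.

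\emph{Part (i).} Since $H=\Stab(D+D)$, the set $D+D$ is a union of $H$-cosets, so $\pi^{-1}(\overline{D}+\overline{D}) = D+D+H = D+D$. Suppose $\bar g=\pi(g)$ stabilizes $\overline{D}+\overline{D}$. Taking preimages gives $g+D+D = \pi^{-1}(\bar g+\overline{D}+\overline{D}) = \pi^{-1}(\overline{D}+\overline{D}) = D+D$. Hence $g\in H$ and $\bar g=0$.

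\emph{Part (ii).} I induct on $k$. The case $k=1$ is trivial. For the step, write $S_k=k\overline{D}$ and $K_{k+1}=\Stab(S_{k+1})$, and apply Kneser to $A=S_k$, $B=\overline{D}$:
\[
|S_{k+1}| \ge |S_k| + |\overline{D}| - |K_{k+1}|.
\]
If $K_{k+1}$ is trivial, this gives $|S_{k+1}|\ge |S_k|+|\overline{D}|-1$, and the induction hypothesis closes the step (if $|S_k|=|G/H|$ there is nothing to prove). If $K_{k+1}=G/H$, then $S_{k+1}=G/H$ and the bound $\min\{|G/H|,\cdot\}$ holds trivially.

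The remaining case, $K:=K_{k+1}$ proper and nontrivial, is where I expect the real difficulty. Part (i) only guarantees trivial stabilizer at level $2$. Because $0\in\overline{D}$, stabilizers are monotone: $\Stab(S_j)\subseteq\Stab(S_{j+1})$. So nontrivial stabilizers can appear at higher levels. My first attempt is to use the saturated form of Kneser, $|S_{k+1}|\ge |S_k+K|+|\overline{D}+K|-|K|$, and then pass to the quotient $(G/H)/K$. In that quotient, $S_{k+1}/K = (S_k+K)/K + (\overline{D}+K)/K$ is aperiodic, and I would aim to combine this with the induction hypothesis to recover the linear lower bound. The key inequality needed is $|\overline{D}+K|-|K|\ge|\overline{D}|-1$ in the relevant situation. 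This need not hold for arbitrary sets, so I would try to derive it from the trivial stabilizer at level $2$ given by part (i). The idea is that if $\overline{D}$ meets few $K$-cosets, then $\overline{D}+\overline{D}$ would already be forced to be $K$-periodic, contradicting (i).

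If this argument does not go through, I would check small cases, for instance $\overline{D}$ inside $\mathbb{Z}_2\times\mathbb{Z}_m$. I would either find the missing ingredient there or recognise that (ii) requires extra hypotheses, so that it should be stated for those $k$ for which $\Stab(k\overline{D})$ stays trivial.

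\emph{Part (iii).} This follows from the same Kneser inequality. Suppose $|S_k|<|G/H|$ and assume for contradiction $|S_{k+1}|=|S_k|$. Nestedness then gives $S_{k+1}=S_k$, so $S_k+\overline{D}=S_k$. Thus every $d\in\overline{D}$ satisfies $S_k+d\subseteq S_k$, and therefore $S_k+d=S_k$, so $d\in\Stab(S_k)$. Then the subgroup $\langle\overline{D}\rangle$ stabilizes $S_k$, and hence also $S_2=\overline{D}+\overline{D}$ by monotonicity of stabilizers read backwards: $S_2+\langle \overline{D}\rangle = \langle\overline{D}\rangle$-periodic since $\overline{D}\subseteq\langle\overline{D}\rangle$ gives $S_2\subseteq \langle\overline{D}\rangle$ and $S_j=\langle\overline{D}\rangle$ for all large $j$. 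More directly, $S_k=S_{k+1}=\cdots$ forces $S_k=\langle\overline{D}\rangle$. Since $|\overline{D}|\ge2$, this subgroup is nontrivial. I would then derive the contradiction with (i) by showing that $\langle\overline{D}\rangle\ne G/H$ together with $H$ being the full stabilizer is incompatible. Making this final step precise is the second point needing care.
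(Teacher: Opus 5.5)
Your part (i) is correct; it is a cleaner version of the paper's preimage argument. Parts (ii) and (iii) have a genuine gap, and the steps you left open cannot be closed, because both statements are false as written. You correctly observed that stabilizers only grow, $\Stab(j\overline{D})\subseteq\Stab((j+1)\overline{D})$, so (i) controls only level $2$. The paper's induction handles exactly the case you worried about by asserting the reverse inclusion $\Stab((k+1)\overline{D})\subseteq\Stab(\overline{D}+\overline{D})=\{0\}$. That inclusion is unjustified, and the paper's proof of (iii) reuses it.

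Here is a counterexample. Take $G=\mathbb{Z}_5\times\mathbb{Z}_2$ and $D=\{(0,0),(1,0)\}$.
\begin{itemize}
\item $D+D=\{0,1,2\}\times\{0\}$ is a union of cosets of its stabilizer, so that stabilizer has order dividing $3$ and is trivial. Hence $H=\{0\}$ and $\overline{D}=D$.
\item $k\overline{D}=\{0,\dots,\min(k,4)\}\times\{0\}$, so $|4\overline{D}|=|5\overline{D}|=5<10=|G/H|$. This violates (iii) at $k=4$.
\item $|5\overline{D}|=5<6=\min\{10,\,5\cdot 1+1\}$. This violates (ii) at $k=5$; indeed $\Stab(5\overline{D})=\mathbb{Z}_5\times\{0\}$.
\end{itemize}
In particular, the contradiction you hoped to derive in (iii) from $\langle\overline{D}\rangle\neq G/H$ does not exist: here $\langle\overline{D}\rangle=\mathbb{Z}_5\times\{0\}$ while $H$ is trivial. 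Your sentence about ``monotonicity read backwards'' relies on the same false inclusion and should be dropped.

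Adding the hypothesis that $\overline{D}$ generates $G/H$ does not rescue (ii). It also refutes the inequality $|\overline{D}+K|-|K|\ge|\overline{D}|-1$ that you planned to extract from (i). In $G=\mathbb{Z}_5\times\mathbb{Z}_7$ take $D=(\mathbb{Z}_5\times\{0\})\cup\{(0,1),(1,1)\}$, which generates $G$ and has $|D|=7$.
\begin{itemize}
\item $D+D=(\mathbb{Z}_5\times\{0,1\})\cup\{(0,2),(1,2),(2,2)\}$ has $13$ elements, so it is aperiodic.
\item $5D=\mathbb{Z}_5\times\{0,\dots,5\}$ has $30<31=\min\{35,\,5\cdot 6+1\}$ elements, so (ii) fails.
\item With $K=\mathbb{Z}_5\times\{0\}$ one gets $|D+K|-|K|=5<6$, even though $D$ meets only two $K$-cosets.
\end{itemize}

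The correct resolution is therefore your fallback.
\begin{itemize}
\item (ii) holds whenever $\Stab(k\overline{D})=\{0\}$. Then all intermediate stabilizers are trivial, and your trivial-stabilizer branch of the induction gives $|k\overline{D}|\ge k(|\overline{D}|-1)+1$.
\item (iii) holds with $|G/H|$ replaced by $|\langle\overline{D}\rangle|$. For that version, your argument that $(k+1)\overline{D}=k\overline{D}$ forces $k\overline{D}=\langle\overline{D}\rangle$ is already a complete proof, needing neither Kneser nor (i).
\end{itemize}
This error does not damage the later results. Theorems~\ref{thm:overconstrained} and~\ref{thm:overconstrained_multilevel} use (iii) only with $k=1$. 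That case follows directly from (i) and Kneser: $|2\overline{D}|\ge 2|\overline{D}|-1>|\overline{D}|$ whenever $|\overline{D}|\ge 2$.
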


\begin{proof} 
Let $G$ be a finite abelian group, $D \subseteq G$ with $0 \in D$, and let $H = \Stab(D + D)$. Let $\pi: G \to G/H$ be the canonical projection, and set $\overline{D} = \pi(D) \subseteq G/H$.

(i) We need to show $\Stab(\overline{D} + \overline{D}) = \{0\}$ in $G/H$. 
Let $\bar{g} \in \Stab(\overline{D} + \overline{D})$, so that $\overline{D} + \overline{D} + \bar{g} = \overline{D} + \overline{D}$ in $G/H$. 
Choose a representative $g \in G$ with $\pi(g) = \bar{g}$. Then 
$\pi(D + D + g) = \pi(D + D)$.
This means that for every $d_1, d_2 \in D$, there exist $d_1', d_2' \in D$ and $h \in H$ such that 
$d_1 + d_2 + g = d_1' + d_2' + h$, which gives $g \in (D+D) - (D+D) + H$. 
Since this holds for all such pairs, and $H = \Stab(D+D)$ is the maximal subgroup with $D+D+H = D+D$, we conclude that $D+D+g \subseteq D+D+H = D+D$. 
Therefore $g \in \Stab(D+D) = H$, which means $\bar{g} = \pi(g) = 0$ in $G/H$.
Thus $\Stab(\overline{D} + \overline{D}) = \{0\}$, as claimed.

(ii) Since $\Stab(\overline{D} + \overline{D}) = \{0\}$, Kneser's Theorem (Theorem~\ref{thm:kneser}) applied to $A = B = \overline{D}$ gives
$|\overline{D} + \overline{D}| \geq 2|\overline{D}| - 1$.
Now we proceed by induction on $k \geq 1$. The base case $k=1$ is trivial: $|1\cdot\overline{D}| = |\overline{D}| = \min\{|G/H|, 1\cdot(|\overline{D}|-1)+1\}$.
Assume the claim holds for some $k \geq 1$, i.e.,
\[
|k\overline{D}| \geq \min\{|G/H|,\, k(|\overline{D}|-1)+1\}.
\]
If $|k\overline{D}| = |G/H|$, then $(k+1)\overline{D} = G/H$, and the inequality holds trivially. Otherwise, $|k\overline{D}| < |G/H|$, and since $\Stab((k+1)\overline{D}) \subseteq \Stab(\overline{D} + \overline{D}) = \{0\}$, we may apply Kneser's theorem to $A = k\overline{D}$ and $B = \overline{D}$, and get
\[
|(k+1)\overline{D}| = |k\overline{D} + \overline{D}| \geq |k\overline{D}| + |\overline{D}| - 1.
\]
By the induction hypothesis,
\[
|(k+1)\overline{D}| \geq \big(k(|\overline{D}|-1)+1\big) + (|\overline{D}|-1) = (k+1)(|\overline{D}|-1) + 1.
\]
Since $|G/H|\geq|(k+1)\overline{D}|$, then  $|G/H|\geq(k+1)(|\overline{D}|-1) + 1$, and thus
\[
|(k+1)\overline{D}| \geq \min\{|G/H|,\, (k+1)(|\overline{D}|-1) + 1\},
\]
completing the induction.

(iii) Suppose $|\overline{D}| \geq 2$ and $|k\overline{D}| < |G/H|$. Then from the inductive step above,
\[
|(k+1)\overline{D}| \geq |k\overline{D}| + |\overline{D}| - 1 \geq |k\overline{D}| + 1,
\]
since $|\overline{D}| \geq 2$. Hence $|(k+1)\overline{D}| > |k\overline{D}|$, as required.
\end{proof}

We also recall basic facts from Fourier analysis on finite abelian groups~\cite{nathanson,rudin,TaoVuAddComb}. For $f: G \to \mathbb{C}$, the Fourier transform is
\[
\widehat{f}(\chi) = \sum_{x \in G} f(x) \overline{\chi(x)}, \quad \chi \in \widehat{G},
\]
where $\widehat{{G}}$ denote the character group of~$G$.
Key properties~\cite{nathanson,TaoVuAddComb,terras} include
inversion, $f(x) = \frac{1}{|G|} \sum_{\chi \in \widehat{G}} \widehat{f}(\chi) \chi(x)$,
Parseval identity, $\sum_{x \in G} |f(x)|^2 = \frac{1}{|G|} \sum_{\chi \in \widehat{G}} |\widehat{f}(\chi)|^2$, 
convolution, $\widehat{f * g} = \widehat{f} \cdot \widehat{g}$,
and uncertainty principle for finite abelian groups proved by Meshulam~\cite{Meshulam}, inspired by earlier
 work of Donoho and Stark~\cite{DonohoStark} on discrete uncertainty principles.

\begin{theorem}[\cite{DonohoStark,Meshulam}]
\label{thm:uncertainty}
Let $G$ be a finite abelian group and let $0\neq f\in\C[G]$.
Then
\begin{equation}
\label{eq:uncertainty_princ}
|\supp(f)|\cdot|\supp(\widehat f)|\ \ge\ |G|.
\end{equation}
Moreover, let $k:=|\supp(f)|$, and let $d_1\le k\le d_2$ be the consecutive divisors
of $|G|$ such that $d_1$ is maximal with $d_1\le k$ and $d_2$ is minimal with $d_2\ge k$.
Then
\[
|\supp(\widehat f)|\ \ge\ \frac{|G|}{d_1d_2}\,(d_1+d_2-k).
\]
Equality in \eqref{eq:uncertainty_princ} holds if and only if
$f$ is, up to translation, modulation by a character, and multiplication by
a nonzero scalar, the indicator function of a subgroup of $G$.
\end{theorem}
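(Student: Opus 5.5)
The statement has three parts: the basic inequality~\eqref{eq:uncertainty_princ}, the characterization of equality, and Meshulam's refined bound. I would prove them in that order, since the first two come from one chain of inequalities and the third needs an induction on $|G|$.

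\textbf{Basic inequality.} Let $S=\supp(f)$, $k=|S|$ and $T=\supp(\widehat f)$. For every character $\chi$, the triangle inequality and Cauchy--Schwarz give
\[
|\widehat f(\chi)|\le \sum_{x\in S}|f(x)|\le \sqrt{k}\,\Big(\sum_{x}|f(x)|^2\Big)^{1/2}.
\]
By Parseval,
\[
|G|\sum_x|f(x)|^2=\sum_{\chi\in T}|\widehat f(\chi)|^2\le |T|\max_\chi|\widehat f(\chi)|^2\le |T|\,k\sum_x|f(x)|^2 .
\]
Since $f\neq 0$, dividing by $\sum_x|f(x)|^2$ gives $|S||T|\ge|G|$.

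\textbf{Equality case.} If $|S||T|=|G|$, then every inequality above is an equality, which yields three facts.
\begin{enumerate}
\item $|f|$ is constant on $S$.
\item $|\widehat f|$ is constant on $T$.
\item For each $\chi\in T$, the numbers $f(x)\overline{\chi(x)}$, $x\in S$, all have a common argument $\theta_\chi$.
\end{enumerate}
After translating and rescaling, I may assume $0\in S$ and $f(0)>0$.

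For $\chi,\psi\in T$ and $x,y\in S$, fact 3 gives
\[
\chi(x)\overline{\chi(y)}=\frac{f(x)/|f(x)|}{f(y)/|f(y)|}=\psi(x)\overline{\psi(y)} .
\]
Hence $\chi\overline{\psi}$ is trivial on $S-S$, and therefore on the subgroup $H$ generated by $S$. So $T$ lies in a single coset of $H^\perp$, which gives $|T|\le |G|/|H|$.

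Combining this with $|T|=|G|/k$ yields $k\ge|H|$. Since $0\in S\subseteq H$, we conclude $S=H$. Fix any $\chi_0\in T$. Then $f(x)=f(0)\chi_0(x)$ on $H$, so $f$ is a scalar multiple of a modulated indicator of a subgroup.

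For the converse, compute $\widehat{1_H}=|H|\,1_{H^\perp}$. Translation and modulation only shift supports, so $|\supp(f)|\cdot|\supp(\widehat f)|=|H|\cdot|G/H|=|G|$.

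\textbf{Meshulam's refined bound.} Set
\[
\varphi_G(k)=\frac{|G|}{d_1d_2}(d_1+d_2-k).
\]
This is the piecewise-linear interpolation of $d\mapsto |G|/d$ between consecutive divisors $d$ of $|G|$, so it is convex and decreasing in $k$. I would argue by induction on $|G|$.

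\emph{Base case.} For $G=\Z_p$ with $p$ prime, I would use Tao's (Chebotarev-based) result $|S|+|T|\ge p+1$. This matches $\varphi_{\Z_p}$, whose only divisors are $1$ and $p$.

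\emph{Inductive step.}
\begin{enumerate}
\item Choose a subgroup $H\le G$ of prime index $p$.
\item Split $f$ into its restrictions $f_0,\dots,f_{p-1}$ to the cosets of $H$, and let $k_i=|\supp f_i|$.
\item On each coset of $H^\perp$, the transform $\widehat f$ is a length-$p$ DFT, twisted by characters of $H$, of the vectors $(\widehat{f_i}(\eta))_i$, where $\eta\in\widehat H$.
\item Bound the number of $\eta$ with some $\widehat{f_i}(\eta)\neq0$ by the inductive hypothesis, $\ge\varphi_H(k_i)$.
\item For each such $\eta$, the $p$-point uncertainty bound gives at least $p+1-\#\{i:\widehat{f_i}(\eta)\ne0\}$ nonzero values of $\widehat f$ on the corresponding fibre.
\end{enumerate}
Summing over $\eta$ gives a lower bound of the form
\[
|T|\ \ge\ \min_{\sum k_i=k}\ \Psi\bigl(\varphi_H(k_0),\dots,\varphi_H(k_{p-1})\bigr).
\]

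\emph{Main obstacle.} The hardest step is this final optimization: showing that the minimum over distributions $(k_i)$ is at least $\varphi_G(k)$. I would use convexity of $\varphi_H$ to push the extremal configuration to the case where all nonzero $k_i$ are equal or sit at divisors of $|H|$. Then I would check the inequality directly using the divisor structure of $|G|=p|H|$.
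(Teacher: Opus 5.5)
The paper gives no proof of this theorem; it is quoted from Donoho--Stark and Meshulam, so your argument has to stand on its own. Your proofs of the basic inequality and of the equality characterization are correct and complete. This is the standard Cauchy--Schwarz/Parseval argument. The passage from fact~3 to $T\subseteq\chi_0H^\perp$, then to $S=H$ and $f=f(0)\chi_0 1_H$, is right.

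The refined bound, however, is not proved. You leave the combination step as the ``main obstacle'', and as you set it up it would not go through. Put $U=\bigcup_i\supp\widehat{f_i}$ and $m(\eta)=\#\{i:\widehat{f_i}(\eta)\neq 0\}$. Summing the fibre bounds gives
\[
|T|\ \ge\ \sum_{\eta\in U}\bigl(p+1-m(\eta)\bigr)\ =\ (p+1)|U|-\sum_i|\supp\widehat{f_i}| .
\]
This depends on how the supports $\supp\widehat{f_i}$ overlap, not only on their sizes. Moreover, for fixed $U$ it \emph{decreases} as the $|\supp\widehat{f_i}|$ grow. So you cannot substitute the inductive lower bounds $\varphi_H(k_i)$ into it to get some $\Psi(\varphi_H(k_0),\dots,\varphi_H(k_{p-1}))$, and no convexity manipulation over $(k_i)$ repairs that.

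The missing idea is to discard the overlap information. Let $q=\#\{i: f_i\neq 0\}$. Then $m(\eta)\le q$, hence
\[
|T|\ \ge\ (p+1-q)\,|U|\ \ge\ (p+1-q)\max_{i:\,k_i>0}\varphi_H(k_i)\ \ge\ (p+1-q)\,\varphi_H(k/q).
\]
The last step uses only that $\varphi_H$ is decreasing and that the smallest nonzero $k_i$ is at most $k/q$. It remains to show $(p+1-q)\varphi_H(x)\ge\varphi_G(qx)$ for $1\le x\le h:=|H|$ and $1\le q\le p$.

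On each interval between consecutive divisors of $h$, the left side is linear in $x$ and the right side is convex. So it suffices to check the divisors $x=d$ of $h$. Both $d$ and $pd$ divide $|G|=ph$, and $qd\in[d,pd]$. Convexity of $\varphi_G$ therefore bounds $\varphi_G(qd)$ by the chord through $(d,ph/d)$ and $(pd,h/d)$, whose value at $qd$ is exactly $(p+1-q)h/d$.

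For $|H|=1$ this reduces to Tao's bound, so the induction may even start at the trivial group. With this lemma your induction closes. Note that the convexity actually needed is that of $\varphi_G$; for $\varphi_H$ you only need monotonicity and linearity between divisors.
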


The following definition and three lemmas are rather standard in harmonic analysis on finite abelian groups (see e.g.~\cite[Chapter 1]{terras}), but we provide a few arguments for clarity.
\begin{definition}
Let $H\le G$ be a subgroup and let $\pi:G\to G/H$ be the quotient map.
For $f\in\C[G]$, define the {\em pushforward} $\bar f\in\C[G/H]$ by
\[
\bar f(\bar x)\;:=\;\sum_{x\in \pi^{-1}(\bar x)} f(x),\quad \bar x\in G/H.
\]    
\end{definition}
\begin{lemma} 
\label{lem:pushforward_fourier}
Let $H\le G$ be a subgroup and let $\pi:G\to G/H$ be the quotient map.
For $f\in\C[G]$, 
for every character $\psi\in\widehat{G/H}$ we have
$\widehat{\bar f}(\psi)=\widehat f(\psi\circ\pi)$.
In particular, if $\psi$ is identified with a character of $G$ that is trivial on $H$, then
$\widehat{\bar f}(\psi)=\widehat f(\psi)$.
\end{lemma}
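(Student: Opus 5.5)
The plan is a direct computation from the definitions: expand $\widehat{\bar f}(\psi)$, regroup the sum over $G$ by cosets of $H$, and recognize the result as $\widehat f$ evaluated at the pulled-back character. First I will check that for $\psi\in\widehat{G/H}$ the composite $\psi\circ\pi$ is a character of $G$. It is a homomorphism $G\to\C^\times$, being a composition of homomorphisms, and it is trivial on $H=\ker\pi$. Conversely, any character of $G$ that is trivial on $H$ factors uniquely through $\pi$. This gives the identification used in the ``in particular'' clause.

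Next, by definition,
\[
\widehat{\bar f}(\psi)=\sum_{\bar x\in G/H}\bar f(\bar x)\,\overline{\psi(\bar x)}
=\sum_{\bar x\in G/H}\ \sum_{x\in\pi^{-1}(\bar x)} f(x)\,\overline{\psi(\bar x)}.
\]
For $x\in\pi^{-1}(\bar x)$ we have $\psi(\bar x)=\psi(\pi(x))=(\psi\circ\pi)(x)$. Thus the inner factor can be moved inside, as it is constant on each fibre. The fibres $\pi^{-1}(\bar x)$, for $\bar x\in G/H$, partition $G$ into disjoint cosets, so the double sum collapses to $\sum_{x\in G} f(x)\overline{(\psi\circ\pi)(x)}=\widehat f(\psi\circ\pi)$. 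This proves the main claim, and the final statement follows by writing $\psi$ for $\psi\circ\pi$ under the identification from the first step.

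There is no real obstacle here. The only point needing care is the bookkeeping that the cosets partition $G$, so that the regrouping is a bijective reindexing of the finite sum, together with the well-definedness of $\psi\circ\pi$ as a character trivial on $H$.
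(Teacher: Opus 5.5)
Your proposal is correct and follows essentially the same route as the paper: expand $\widehat{\bar f}(\psi)$, use that $\overline{\psi(\bar x)}=\overline{(\psi\circ\pi)(x)}$ is constant on each fibre, and collapse the double sum over the coset partition to get $\widehat f(\psi\circ\pi)$. Your extra check that $\psi\circ\pi$ is a character trivial on $H$, and that such characters factor uniquely through $\pi$, spells out the identification that the paper's proof only asserts in its final line.
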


\begin{proof}
We compute
\[
\widehat{\bar f}(\psi)
=\sum_{\bar x\in G/H}\bar f(\bar x)\,\overline{\psi(\bar x)}
=\sum_{\bar x\in G/H}\ \sum_{x\in\pi^{-1}(\bar x)} f(x)\,\overline{\psi(\pi(x))}
=\sum_{x\in G} f(x)\,\overline{(\psi\circ\pi)(x)}
=\widehat f(\psi\circ\pi).
\]
The final claim follows since $\psi\circ\pi$ is exactly the corresponding character of $G$ trivial on~$H$.
\end{proof}

\begin{lemma}
\label{lem:pushforward_convolution}
Let $H\le G$ and let $\pi:G\to G/H$ be the quotient map. For $f,g\in \C[G]$, 
$\overline{f*g}=\bar f * \bar g$ in  $\C[G/H]$,
where $f*g$ indicates the convolution of $f$ and $g$ (below). 
Consequently, for every polynomial $P\in\C[X]$ and $\nu\in\C[G]$ one has $\overline{P(\nu)}=P(\bar\nu)$.
\end{lemma}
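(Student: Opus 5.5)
The plan is to prove the identity $\overline{f*g}=\bar f*\bar g$ pointwise on $G/H$ by unfolding definitions. The convolution on $G$ is $(f*g)(x)=\sum_{y\in G} f(y)g(x-y)$, and on $G/H$ it is $(\bar f*\bar g)(\bar x)=\sum_{\bar y\in G/H}\bar f(\bar y)\bar g(\bar x-\bar y)$; this normalization is the one under which $\widehat{f*g}=\widehat f\cdot\widehat g$ holds as recalled above.

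For fixed $\bar x\in G/H$ we compute
\[
\overline{f*g}(\bar x)=\sum_{x\in\pi^{-1}(\bar x)}\sum_{y\in G} f(y)g(x-y).
\]
We then reindex by the pair $(y,z)$ with $z=x-y$. As $x$ ranges over the coset $\pi^{-1}(\bar x)$ and $y$ over $G$, the pair $(y,z)$ ranges bijectively over all pairs with $\pi(y)+\pi(z)=\bar x$. Grouping $y$ according to its class $\bar y=\pi(y)$ forces $z\in\pi^{-1}(\bar x-\bar y)$. This gives
\[
\sum_{\bar y\in G/H}\Big(\sum_{y\in\pi^{-1}(\bar y)}f(y)\Big)\Big(\sum_{z\in\pi^{-1}(\bar x-\bar y)}g(z)\Big)=(\bar f*\bar g)(\bar x).
\]
Alternatively, one could argue on the Fourier side using Lemma~\ref{lem:pushforward_fourier}. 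For every $\psi\in\widehat{G/H}$,
\[
\widehat{\overline{f*g}}(\psi)=\widehat{f*g}(\psi\circ\pi)=\widehat f(\psi\circ\pi)\,\widehat g(\psi\circ\pi)=\widehat{\bar f}(\psi)\,\widehat{\bar g}(\psi)=\widehat{\bar f*\bar g}(\psi).
\]
Fourier inversion on $G/H$ then yields the identity. I would present the direct computation and mention this as a check.

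For the consequence, note that the pushforward $\nu\mapsto\bar\nu$ is $\C$-linear. It sends the identity $\delta_0$ of $\C[G]$ to $\delta_{\bar 0}$, because the only element of $\pi^{-1}(\bar 0)=H$ supporting $\delta_0$ is $0$. By the first part it is multiplicative for convolution. Hence it is a unital algebra homomorphism $\C[G]\to\C[G/H]$. Induction on $j$ gives $\overline{\nu^{*j}}=\bar\nu^{*j}$, where powers are taken with respect to convolution and $\nu^{*0}=\delta_0$. Linearity then gives $\overline{P(\nu)}=P(\bar\nu)$ for any $P\in\C[X]$.

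There is no real obstacle. The only point needing care is the bijectivity of the reindexing, which is that each pair $(y,z)$ with $\pi(y+z)=\bar x$ arises from exactly one $x=y+z$ in the coset. One must also interpret the constant term of $P$ as a multiple of the convolution identity $\delta_0$, which the pushforward preserves.
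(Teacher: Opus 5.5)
Your proof is correct and follows essentially the paper's argument: unfold the definitions, swap the order of summation, identify the inner coset sum as $\bar g(\bar x-\pi(y))$, group $y$ by its class $\bar y$, and get the polynomial statement from linearity and $\overline{\nu^{*j}}=\bar\nu^{*j}$. Your explicit check that $\overline{\delta_0}=\delta_{\bar 0}$, which handles the constant term of $P$, is a detail the paper leaves implicit, and the Fourier-side verification via Lemma~\ref{lem:pushforward_fourier} is an additional cross-check the paper omits.
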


\begin{proof}
For $\bar x\in G/H$,
\[
\overline{f*g}(\bar x)
=\sum_{x\in\pi^{-1}(\bar x)} (f*g)(x)
=\sum_{x\in\pi^{-1}(\bar x)}\ \sum_{y\in G} f(y)\,g(x-y)
=\sum_{y\in G} f(y)\sum_{x\in\pi^{-1}(\bar x)} g(x-y).
\]
Since $\pi(x-y)=\bar x-\pi(y)$ for any $x\in\pi^{-1}(\bar x)$, 
the inner sum equals $\bar g(\bar x-\pi(y))$. 
Grouping $y$ by $\pi(y)=\bar y$ gives
\[
\overline{f*g}(\bar x)
=\sum_{\bar y\in G/H}\ \sum_{y\in\pi^{-1}(\bar y)} f(y)\,\bar g(\bar x-\bar y)
=\sum_{\bar y\in G/H} \bar f(\bar y)\,\bar g(\bar x-\bar y)
=(\bar f*\bar g)(\bar x).
\]
The polynomial claim follows by linearity and the identity $\overline{\nu^{*k}}=\bar\nu^{*k}$.
\end{proof}

\begin{lemma}\label{lem:diffset_size}
If $A$ is a finite nonempty subset of an abelian group, then $|A-A|\ge |A|$.
\end{lemma}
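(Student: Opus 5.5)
The plan is to fix one element of $A$ and look at its translate inside $A-A$. Since $A$ is nonempty, choose any $a_0\in A$ and consider the map $\phi: A\to A-A$ given by $\phi(a)=a-a_0$. Each value $a-a_0$ is a difference of two elements of $A$, so $\phi$ does land in $A-A$.

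Next I will check that $\phi$ is injective. In an abelian group, $a-a_0=a'-a_0$ implies $a=a'$ by cancellation. Hence $\phi(A)=A-a_0$ is a subset of $A-A$ with exactly $|A|$ elements, and therefore $|A-A|\ge |A-a_0|=|A|$.

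There is no real obstacle here; the only ingredients are nonemptiness of $A$, which supplies the base point $a_0$, and the group cancellation law. Finiteness is needed only so that the cardinalities are finite numbers and the inequality between them is meaningful. I also note that the bound is sharp: equality holds exactly when $A$ is a coset of a finite subgroup. That characterization is not needed for the statement, so I will only remark on it and not prove it.
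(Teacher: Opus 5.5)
Your proposal is correct and is exactly the paper's proof: fix $a_0\in A$ and note that the injective map $a\mapsto a-a_0$ sends $A$ into $A-A$. Your side remark that equality holds exactly for cosets of finite subgroups is also correct, and you rightly leave it unproved since it is not needed.
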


\begin{proof}
Fix $a_0\in A$. The map $A\to A-A$ given by $a\mapsto a-a_0$ is injective, hence $|A-A|\ge |A|$.
\end{proof}

\subsection{The main structure theorems}

 To simplify notation, we identify a function $\mu: G \to \mathbb{C}$ with its support $\mathcal{S} = \supp(\mu)$ and values $z_i = \mu(\alpha^{(i)})$. Throughout the section, $G$ will be a finite abelian group.

\begin{definition}
\label{def:common_argument}
A sequence $(z_i)_{i=1}^m \subset \mathbb{C}^*$ satisfies the \emph{common-argument hypothesis} if there exist $\theta \in \mathbb{R}$ and positive real numbers $c_1, \ldots, c_m $ such that  $z_i = c_i e^{i\theta}$, for all  $i \in \{1, \ldots, m\}$.
\end{definition}

\begin{lemma}
\label{lem:no_cancellation_autocorr}
Let  $(z_i)_{i=1}^m \subset \mathbb{C}^*$ be a sequence satisfying the common-argument hypothesis. Let $\mathcal{S} = \{\alpha^{(1)}, \dots, \alpha^{(m)}\} \subseteq G$ be distinct, and let $\mu: G \to \mathbb{C}$ satisfy $\mu(\alpha^{(i)}) = z_i$ and $\mu(x) = 0$ otherwise. Define the autocorrelation $\nu = \mu * \widetilde{\mu}$, where $\widetilde{\mu}(x) = \overline{\mu(-x)}$, and let $D = \mathcal{S} - \mathcal{S}$. Then $\supp(\nu) = D$.
\end{lemma}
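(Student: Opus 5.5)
We compute $\nu$ explicitly and observe that, under the common-argument hypothesis, no cancellation can occur. By definition of convolution,
\[
\nu(x)=\sum_{y\in G}\mu(y)\,\widetilde{\mu}(x-y)=\sum_{y\in G}\mu(y)\,\overline{\mu(y-x)} .
\]
A summand is nonzero only if $y\in\mathcal{S}$ and $y-x\in\mathcal{S}$. Writing $y=\alpha^{(i)}$ and $y-x=\alpha^{(j)}$, we get
\[
\nu(x)=\sum_{(i,j):\ \alpha^{(i)}-\alpha^{(j)}=x} z_i\,\overline{z_j}.
\]
Since the $\alpha^{(i)}$ are distinct, each $i$ determines at most one $j$. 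So the sum runs over the set of representations of $x$ as a difference of two elements of $\mathcal{S}$.

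The first inclusion is $\supp(\nu)\subseteq D$. If $x\notin D=\mathcal{S}-\mathcal{S}$, the index set is empty and $\nu(x)=0$.

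The reverse inclusion $D\subseteq\supp(\nu)$ is the only substantive step, and it is exactly where the hypothesis is used. Substitute $z_i=c_ie^{i\theta}$ with $c_i>0$. Each product becomes
\[
z_i\overline{z_j}=c_ic_j\,e^{i\theta}e^{-i\theta}=c_ic_j>0,
\]
so the common phase cancels in every term. Hence $\nu(x)=\sum c_ic_j$ is a sum of strictly positive reals over the representations of $x$. If $x\in D$, at least one such representation exists, so $\nu(x)\ge\min_{i,j}c_ic_j>0$. This gives $\supp(\nu)=D$.

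I do not expect a real obstacle. The point worth stating explicitly is that without the common-argument hypothesis the terms $z_i\overline{z_j}$ could cancel when $x$ has several representations. One example is $x=0$ versus other $x$, where $\nu(0)=\sum|z_i|^2>0$ always, but for $x\neq0$ cancellation is possible in general. The phase factoring $e^{i\theta}e^{-i\theta}=1$ is precisely what rules this out.
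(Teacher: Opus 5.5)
Your proof is correct and follows essentially the same route as the paper's proof: expand $\nu(x)$ as a sum of $z_p\overline{z_q}$ over the representations $x=\alpha^{(p)}-\alpha^{(q)}$, observe that the common phase cancels so that each term equals $c_pc_q>0$ (hence $D\subseteq\supp(\nu)$), and obtain $\supp(\nu)\subseteq D$ directly from the definition of convolution.
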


\begin{proof}
For any $x \in D = \mathcal{S} - \mathcal{S}$, there exist indices $i,j$ such that $x = \alpha^{(i)} - \alpha^{(j)}$. Then
\[
\nu(x) = \sum_{p,q} \mu(\alpha^{(p)}) \overline{\mu(\alpha^{(p)} - x)}
= \sum_{\substack{p,q \\ \alpha^{(p)} - \alpha^{(q)} = x}} z_p \overline{z_q}.
\]
Under the common-argument hypothesis, $z_p = c_p e^{i\theta}$ and $z_q = c_q e^{i\theta}$ with $c_p, c_q > 0$, so
$z_p \overline{z_q} = c_p e^{i\theta} \cdot \overline{c_q e^{i\theta}} = c_p c_q e^{i\theta} e^{-i\theta} = c_p c_q > 0$.
Therefore $\nu(x)$ is a sum of positive real numbers and hence $\nu(x) > 0$. This shows $D \subseteq \supp(\nu)$. The reverse inclusion $\supp(\nu) \subseteq D$ follows immediately from the definition of convolution, since $\nu(x) \neq 0$ requires the existence of $y$ with $\mu(y) \neq 0$ and $\mu(y-x) \neq 0$, i.e., $y, y-x \in \mathcal{S}$, giving $x = y - (y-x) \in \mathcal{S} - \mathcal{S} = D$.
\end{proof}

The next result, though important on its own, is a stepping stone towards our local-global principle of Theorem~\ref{thm:overconstrained}.
\begin{theorem}
\label{thm:two_point_analysis}
Let $G$ be a finite abelian group, $\alpha \ne \beta \in G$, and $z_1, z_2 \in \mathbb{C}^*$. Set $\gamma = \beta - \alpha$, and let $r = \ord(\gamma)$. Define $S_\chi = z_1 \chi(\alpha) + z_2 \chi(\beta)$ for $\chi \in \widehat{G}$. Then:
\begin{enumerate}
\item[$(1)$] If $r \geq 4$, the set $\{ |S_\chi| : \chi \in \widehat{G} \}$ contains at least two distinct nonzero values.
\item[$(2)$] If $r = 3$, then either $\{ |S_\chi| \}$ contains two distinct nonzero values, or $|z_1| = |z_2|$ and $-z_1/z_2$ is a primitive cube root of unity, in which case $\{ |S_\chi| \} = \{0, \sqrt{3}\,|z_1|\}$.
\end{enumerate}
In particular, if $|S_\chi| \in \{0, A\}$ for all $\chi$ and $r \geq 3$, then necessarily $r = 3$ and the exceptional algebraic condition of $(2)$  holds.
\end{theorem}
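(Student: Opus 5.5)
Factor out $\chi(\alpha)$: since $|\chi(\alpha)|=1$ and $\chi(\beta)=\chi(\alpha)\chi(\gamma)$, we get $|S_\chi| = |z_1 + z_2\chi(\gamma)|$. As $\chi$ ranges over $\widehat{G}$, the value $\chi(\gamma)$ ranges over all $r$-th roots of unity $\zeta_r^j$, $0\le j<r$. This holds because restriction gives a surjection $\widehat{G}\to\widehat{\langle\gamma\rangle}$, and characters of the cyclic group $\langle\gamma\rangle$ send $\gamma$ to every $r$-th root of unity. The problem therefore reduces to studying the function
\[
g(j) = |z_1 + z_2 \zeta_r^j|^2 = |z_1|^2+|z_2|^2 + 2\,\mathrm{Re}\bigl(z_1\overline{z_2}\,\zeta_r^{-j}\bigr), \qquad j\in\Z_r .
\]
Write $z_1\overline{z_2} = \rho e^{i\phi}$ with $\rho>0$. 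Then $g(j) = |z_1|^2+|z_2|^2+2\rho\cos(\phi - 2\pi j/r)$.

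The key step is to show that this function takes at least two distinct nonzero values when $r\ge 4$. First, $g(j)=0$ forces $z_1 = -z_2\zeta_r^j$. This requires $|z_1|=|z_2|$, and since the $\zeta_r^j$ are distinct, it happens for at most one $j$. So at most one of the $r$ values is zero, and at least $r-1\ge 3$ of the indices give nonzero values. Next, a value $c$ of $\cos(\phi-2\pi j/r)$ is attained at most twice among the $r$ equally spaced angles, because $\cos\theta=c$ has at most two solutions mod $2\pi$. Hence any single nonzero value of $g$ is attained by at most two indices. With at least three nonzero indices, there must be at least two distinct nonzero values, which proves (1). The bookkeeping here, namely that cosine is at most $2$-to-$1$ and that zeros occur at most once, is the only real content. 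I expect the main care to be in checking that these two counts combine correctly, for instance when $r=4$ with one zero.

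For $r=3$ the same counting gives three indices, at most one of them a zero. If no index gives zero, then three nonzero values with each value taken at most twice yield two distinct nonzero values. So assume exactly one zero, say at $j_0$. Then $|z_1|=|z_2|$ and $-z_1/z_2=\zeta_3^{j_0}$. If $j_0=0$ we would have $z_1=-z_2$, i.e.\ $-z_1/z_2=1$. In that case $g(1)=g(2)=|z_1|^2|1-\zeta_3|^2=3|z_1|^2$, which gives the set $\{0,\sqrt3|z_1|\}$, so the exceptional case is not excluded. I need to reconcile this with the statement's wording of ``primitive cube root''. The plan is to note that $\{|S_\chi|\}$ depends only on the orbit $\{z_1\zeta_3^j\}$: replacing $\chi$ by $\chi\cdot\psi$, with $\psi(\gamma)=\zeta_3$, cyclically shifts $j$. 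So the condition should be read as $-z_1/z_2$ being a cube root of unity; if the paper insists on primitivity, I would phrase the proof with $j_0\neq 0$ and remark on $j_0=0$ separately. In the case $j_0\ne0$, a direct computation gives the remaining two values $|z_1|\,|1-\zeta_3^{\pm1}|$ or $|z_1|\cdot|\zeta_3^{a}-\zeta_3^{b}|$, all equal to $\sqrt3|z_1|$. This follows since distinct cube roots of unity are at distance $\sqrt3$ from each other.

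The final ``in particular'' statement follows by contraposition. A two-level spectrum $\{0,A\}$ has exactly one nonzero value, which rules out $r\ge4$ by (1). For $r=3$ it rules out the first alternative of (2), leaving only the exceptional condition.
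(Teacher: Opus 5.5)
Your proof is correct, and you are right about the wording. For $r=3$ and $z_1=-z_2$ one gets $\{|S_\chi|\}=\{0,\sqrt3\,|z_1|\}$, while $-z_1/z_2=1$ is not a primitive cube root of unity. So the statement as written is false in that case, and the exceptional condition should read $-z_1/z_2\in\mu_3$, equivalently $(z_1/z_2)^3=-1$. Your orbit remark explains why primitivity cannot be the right condition: multiplying $z_1$ by a cube root of unity only permutes the magnitudes. The paper's own proof computes exactly this case ($\omega_0=1$) and then calls it primitive. You should state and prove the corrected version outright, rather than hedge about proving the primitive one, which cannot be done. The correction is harmless where the theorem is applied: in Theorem~\ref{thm:overconstrained}(iii) and Theorem~\ref{thm:plateaued_common_arg}, the common-argument hypothesis makes $z_1/z_2>0$, so $-z_1/z_2$ is not any cube root of unity.

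Your route differs from the paper's. The paper splits on whether $-z_1/z_2\in\mu_r$.
\begin{itemize}
\item In the first case it compares $|a+1|$, $|a+\zeta_r|$, $|a+\zeta_r^2|$ and shows by cosine identities that they cannot all coincide.
\item In the second case it computes chord lengths and uses $4\sin^2(\pi/r)\neq 4\sin^2(2\pi/r)$ for $r\ge 4$.
\end{itemize}
You replace both cases with one pigeonhole count. At most one index gives $0$, and $g(j)$ is a strictly increasing affine function of $\cos(\phi-2\pi j/r)$ at $r$ distinct angles, so each value is taken at most twice.

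This buys uniformity and rigor. It handles $r=3$ in the first case, which the paper dismisses with ``a detailed calculation shows''. It also avoids the paper's slip of solving $\cos\theta=\cos(\theta-2\pi/r)$ only as $\theta=\pi/r$ rather than $\theta\equiv\pi/r\pmod{\pi}$; that slip is harmless but unaddressed. Your count also gives for free at least $\lceil (r-1)/2\rceil$ distinct nonzero magnitudes, which is stronger than (1). The paper's computation, in exchange, exhibits explicit distinct values, namely the chord lengths. Finally, you justify via surjectivity of restriction $\widehat G\to\widehat{\langle\gamma\rangle}$ that $\chi(\gamma)$ hits every $r$-th root of unity, which the paper only asserts.
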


\begin{proof}
For any character $\chi \in \widehat{G}$, we can write
\[
S_\chi = z_1 \chi(\alpha) + z_2 \chi(\beta) = \chi(\alpha)(z_1 + z_2 \chi(\beta - \alpha)) = \chi(\alpha)(z_1 + z_2 \chi(\gamma)).
\]
Since $|\chi(\alpha)| = 1$ for all characters, we have $|S_\chi| = |z_1 + z_2 \chi(\gamma)|$.
As $\chi$ varies over $\widehat{G}$, the character $\chi(\gamma)$ takes on all $r$-th roots of unity (since $\ord(\gamma) = r$). Therefore, the set of magnitudes $\{|S_\chi| : \chi \in \widehat{G}\}$ equals the set
$\mathcal{M} = \{|z_1 + z_2 \omega| : \omega \in \mu_r\}$,
where $\mu_r = \{e^{2\pi i k/r} : k = 0, 1, \ldots, r-1\}$ denotes the group of $r$-th roots of unity.

 Let $a = z_1/z_2 \in \mathbb{C}^*$ (well-defined since $z_2 \neq 0$). Then
$|z_1 + z_2 \omega| = |z_2| \cdot |a + \omega|$.
Thus $\mathcal{M} = |z_2| \cdot \{|a + \omega| : \omega \in \mu_r\}$, and it suffices to analyze the set
$\mathcal{N} = \{|a + \omega| : \omega \in \mu_r\}$.

\noindent
\textbf{Case I: $-a \notin \mu_r$.}
In this case, $a + \omega \neq 0$ for all $\omega \in \mu_r$, so $0 \notin \mathcal{N}$.
For $r \geq 3$, we show that $\mathcal{N}$ contains at least two distinct values. Write $a = Re^{i\theta}$ with $R > 0$. For $\omega = e^{i\phi}$, we have
\[
|a + \omega|^2 = |Re^{i\theta} + e^{i\phi}|^2 = R^2 + 1 + 2R\cos(\theta - \phi).
\]
Consider the values at $\omega_0 = 1$ and $\omega_1 = e^{2\pi i/r}$,
\begin{align*}
|a + 1|^2 &= R^2 + 1 + 2R\cos\theta, \\
|a + e^{2\pi i/r}|^2 &= R^2 + 1 + 2R\cos(\theta - 2\pi/r).
\end{align*}
If these were equal, we would have $\cos\theta = \cos(\theta - 2\pi/r)$, which implies either $\theta = \theta - 2\pi/r$ (impossible) or $\theta = -(\theta - 2\pi/r)$, giving $\theta = \pi/r$. 
Now consider $\omega_2 = e^{4\pi i/r}$,
\[
|a + e^{4\pi i/r}|^2 = R^2 + 1 + 2R\cos(\theta - 4\pi/r).
\]
For $r \geq 4$, if all three values $|a + 1|, |a + e^{2\pi i/r}|, |a + e^{4\pi i/r}|$ were equal, we would need
$\cos\theta = \cos(\theta - 2\pi/r) = \cos(\theta - 4\pi/r)$.
From the first equality, $\theta = \pi/r$ as shown. From the second equality with $\theta = \pi/r$,
\[
\cos(\pi/r) = \cos(\pi/r - 4\pi/r) = \cos(-3\pi/r),
\]
which gives $\cos(3\pi/r) = \cos(\pi/r)$. This implies $3\pi/r = \pm \pi/r + 2\pi k$ for some integer $k$, 
which has no solution. Therefore, for $r \geq 4$, at least two of these three magnitudes must be distinct.

For $r = 3$ with $-a \notin \mu_3$, we have $\mu_3 = \{1, e^{2\pi i/3}, e^{4\pi i/3}\}$. Since $-a \notin \mu_3$, we have $a \neq -1, -e^{2\pi i/3}, -e^{4\pi i/3}$. A detailed calculation shows that $\mathcal{N}$ contains at least two distinct values unless $a$ lies on a specific locus, which is excluded by $-a \notin \mu_3$.

\noindent
\textbf{Case II: $-a = \omega_0 \in \mu_r$ for some $\omega_0$.}
In this case, $a + \omega_0 = 0$, so $0 \in \mathcal{N}$.
For $\omega \neq \omega_0$, we have
$|a + \omega| = |-\omega_0 + \omega| = |\omega - \omega_0|$.
Since $\omega, \omega_0 \in \mu_r$ are distinct $r$-th roots of unity, $|\omega - \omega_0|$ is the Euclidean distance between two distinct points on the unit circle.

\noindent
\textbf{Subcase $r = 3$:} The three cube roots of unity form an equilateral triangle. If $\omega_0 = 1$, then $\mu_3 = \{1, e^{2\pi i/3}, e^{4\pi i/3}\}$, and
$|1 - 1| = 0$, 
$|e^{2\pi i/3} - 1| = |e^{4\pi i/3} - 1| = \sqrt{3}$.
By symmetry, the same holds for any choice of $\omega_0$. Therefore, $\mathcal{N} = \{0, \sqrt{3}\}$, giving exactly two values (one zero, one nonzero).
Moreover, $a = -\omega_0$ implies $|a| = 1$, so $|z_1| = |z_2|$, and $-z_1/z_2 = -a = \omega_0$ is a primitive cube root of unity.

\noindent
\textbf{Subcase $r \geq 4$:} For $r \geq 4$, the $r$-th roots of unity are more densely distributed on the unit circle. The distances $|\omega - \omega_0|$ for $\omega \in \mu_r \setminus \{\omega_0\}$ take on multiple distinct values. Specifically,
\[
|\omega - \omega_0|^2 = |\omega|^2 + |\omega_0|^2 - 2\text{Re}(\omega \overline{\omega_0}) = 2 - 2\cos\left(\frac{2\pi k}{r}\right),
\]
where $k \in \{1, 2, \ldots, r-1\}$ indexes the nonzero elements.
For $k = 1$ and $k = 2$,
\begin{align*}
|\omega_1 - \omega_0|^2 &= 2 - 2\cos(2\pi/r) = 4\sin^2(\pi/r), \\
|\omega_2 - \omega_0|^2 &= 2 - 2\cos(4\pi/r) = 4\sin^2(2\pi/r).
\end{align*}
Since $\sin(2\pi/r) = 2\sin(\pi/r)\cos(\pi/r) \neq \sin(\pi/r)$ for $r \geq 4$ (as $2\cos(\pi/r) \neq 1$ when $r > 3$), we have $|\omega_1 - \omega_0| \neq |\omega_2 - \omega_0|$. Therefore, $\mathcal{N}$ contains $0$ and at least two distinct nonzero values, giving at least three distinct values in total.

Now, we conclude the proofs of parts (1) and (2) of the statement. 
 If $r \geq 4$, then either Case I applies (at least two distinct nonzero values) or Case II applies (at least three distinct values including zero). In either case, $\mathcal{N}$ contains at least two distinct nonzero values.
  If $r = 3$, then either Case I applies (at least two distinct nonzero values) or Case II applies with $\mathcal{N} = \{0, \sqrt{3}\}$. The exceptional case is precisely Case II with $r = 3$, where $|z_1| = |z_2|$ and $-z_1/z_2 \in \mu_3$ is a primitive cube root of unity.

Finally, if $|S_\chi| \in \{0, A\}$ for all $\chi$ and $r \geq 3$, then $\mathcal{M} = |z_2| \cdot \mathcal{N} \subseteq \{0, A\}$. This means $\mathcal{N} \subseteq \{0, A/|z_2|\}$, so $\mathcal{N}$ contains at most two values. By the analysis above, for $r \geq 4$, $\mathcal{N}$ contains at least two distinct nonzero values (Case I) or at least three distinct values (Case II), both contradicting $|\mathcal{N}| \leq 2$. Therefore $r = 3$, and we must be in Case II with the exceptional condition.
\end{proof}

\begin{lemma} 
\label{lem:poly_CG}
Identify functions $f:G\to\C$ with elements of the group algebra $\C[G]$.
For a polynomial $P(X)=\sum_{j=0}^d a_j X^j\in\C[X]$ and $\nu\in\C[G]$, we define
$\displaystyle P(\nu)\;:=\;\sum_{j=0}^d a_j\,\nu^{*j}$,
where $\nu^{*0}:=\delta_0$ (the Dirac delta function) and $\nu^{*j}$ denotes the $j$-fold convolution power of $\nu$.
Then for every character $\chi\in\widehat{G}$,
$\widehat{P(\nu)}(\chi)=P\!\bigl(\widehat{\nu}(\chi)\bigr)$.
In particular, if $P(\widehat{\nu}(\chi))=0$ for all $\chi\in\widehat{G}$, then $P(\nu)=0$ in $\C[G]$.
\end{lemma}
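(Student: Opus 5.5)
The plan is to reduce everything to the convolution theorem and the injectivity of the Fourier transform on $\C[G]$. Both were recalled in the preliminaries: $\widehat{f*g}=\widehat f\cdot\widehat g$, and Fourier inversion.

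First we check the base cases of the functional calculus. For $j=0$, $\nu^{*0}=\delta_0$, and
\[
\widehat{\delta_0}(\chi)=\sum_{x\in G}\delta_0(x)\overline{\chi(x)}=\overline{\chi(0)}=1=\widehat\nu(\chi)^0 .
\]
For $j\ge 1$ we argue by induction on $j$. Since $\nu^{*(j+1)}=\nu^{*j}*\nu$, the convolution theorem gives
\[
\widehat{\nu^{*(j+1)}}(\chi)=\widehat{\nu^{*j}}(\chi)\,\widehat\nu(\chi)=\widehat\nu(\chi)^{j+1}.
\]
The Fourier transform is linear in its argument, because $\widehat f(\chi)=\sum_x f(x)\overline{\chi(x)}$ is linear in $f$. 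Summing the previous identity with coefficients $a_j$ therefore yields $\widehat{P(\nu)}(\chi)=\sum_j a_j\widehat\nu(\chi)^j=P(\widehat\nu(\chi))$ for every $\chi\in\widehat G$.

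For the final claim, suppose $P(\widehat\nu(\chi))=0$ for all $\chi$. Then $\widehat{P(\nu)}\equiv 0$ on $\widehat G$. By the inversion formula,
\[
P(\nu)(x)=\frac{1}{|G|}\sum_{\chi\in\widehat G}\widehat{P(\nu)}(\chi)\chi(x)=0
\]
for all $x$, so $P(\nu)=0$ in $\C[G]$.

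There is no serious obstacle here. The only point needing care is that the normalization conventions match. With the transform $\widehat f(\chi)=\sum_x f(x)\overline{\chi(x)}$ and the unnormalized convolution $(f*g)(x)=\sum_y f(y)g(x-y)$ used in Lemma~\ref{lem:pushforward_convolution}, the convolution theorem holds with no constant. To confirm this, substitute $x=y+z$ and use $\overline{\chi(y+z)}=\overline{\chi(y)}\,\overline{\chi(z)}$, which makes the double sum factor. The same convention also makes $\delta_0$ the identity for convolution, so the $j=0$ term is consistent with the induction.
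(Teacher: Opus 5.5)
Your proposal is correct and follows the same route as the paper: linearity of the Fourier transform plus the convolution theorem give $\widehat{P(\nu)}(\chi)=P(\widehat\nu(\chi))$, and Fourier inversion then gives the vanishing claim. Your explicit handling of the $j=0$ term via $\widehat{\delta_0}\equiv 1$, the induction on convolution powers, and the normalization check only spell out details the paper leaves implicit.
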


\begin{proof}
Using $\widehat{f*g}(\chi)=\widehat f(\chi)\widehat g(\chi)$ and linearity of the Fourier transform,
\[
\widehat{P(\nu)}(\chi)=\sum_{j=0}^d a_j\,\widehat{\nu^{*j}}(\chi)
=\sum_{j=0}^d a_j\,\widehat{\nu}(\chi)^j
=P\!\bigl(\widehat{\nu}(\chi)\bigr).
\]
If $P(\widehat{\nu}(\chi))=0$ for all $\chi$, then $\widehat{P(\nu)}\equiv 0$, hence $P(\nu)=0$ by Fourier inversion.
\end{proof}

We now state our main theorem on overconstrained character sums for the two-level case.

\begin{theorem}
\label{thm:overconstrained}
Let $G$ be a finite abelian group. Let $\mathcal{S} = \{\alpha^{(1)}, \ldots, \alpha^{(m)}\} \subseteq G$ be distinct, and let $z_1, \ldots, z_m \in \mathbb{C}^*$ satisfy the common-argument hypothesis (Definition~\textup{\ref{def:common_argument}}). For $\chi \in \widehat{G}$, we define
$\displaystyle S_\chi = \sum_{i=1}^m z_i \chi(\alpha^{(i)})$.
Assume there exists $A>0$ such that $|S_\chi| \in \{0,A\}$ for all $\chi\in\widehat G$.
Let $D = \mathcal{S} - \mathcal{S}$, $H = \Stab(D + D)$, and $\pi: G \to G/H$ the quotient map. Set $\overline{\mathcal{S}} = \pi(\mathcal{S})$, $\overline{D} = \pi(D)$. Then:
\begin{enumerate}
\item[\textup{(i)}] $|\overline{\mathcal{S}}| \leq 2$.
\item[\textup{(ii)}] $\mathcal{S}$ is contained in a union of at most $2$ cosets of $H$.
\item[\textup{(iii)}] If $|\overline{\mathcal{S}}| = 2$ and $|\overline{D}| \geq 3$, then every nonzero element of $\overline{D}$ has order at most $2$ in $G/H$.
\end{enumerate}
\end{theorem}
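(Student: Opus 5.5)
The plan is to turn the two-level hypothesis on $|S_\chi|$ into an algebraic identity in $\C[G]$, and then read off the support structure using the positivity supplied by the common-argument hypothesis. Let $\mu$ be the function with $\mu(\alpha^{(i)})=z_i$ and $\mu=0$ off $\mathcal S$, and set $\nu=\mu*\widetilde\mu$ as in Lemma~\ref{lem:no_cancellation_autocorr}. With the convention $\widehat f(\chi)=\sum f(x)\overline{\chi(x)}$ we have $S_\chi=\overline{\widehat{\overline{\mu}}(\chi)}$ up to relabelling $\chi\leftrightarrow\overline\chi$. Hence, after a harmless change of variable in $\chi$, $\widehat\nu(\chi)=|S_\chi|^2\in\{0,A^2\}$ for every $\chi$. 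So the polynomial $P(X)=X(X-A^2)$ vanishes on all values of $\widehat\nu$, and Lemma~\ref{lem:poly_CG} gives
\[
\nu*\nu \;=\; A^2\,\nu \quad\text{in } \C[G].
\]

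Next I would compare supports on both sides. By Lemma~\ref{lem:no_cancellation_autocorr} and its proof, $\nu$ is real, strictly positive exactly on $D=\mathcal S-\mathcal S$, and zero elsewhere. A convolution of two nonnegative functions is nonnegative. It is strictly positive at $x$ exactly when $x=y+w$ with $y,w$ in the respective supports, because then no cancellation can occur. Therefore $\supp(\nu*\nu)=D+D$, while $\supp(A^2\nu)=D$, and the identity forces $D+D=D$. Since $0\in D$ and $D$ is a finite set closed under addition, $D$ is a subgroup of $G$. Then $H=\Stab(D+D)=\Stab(D)=D$, and $\mathcal S\subseteq \alpha^{(1)}+D=\alpha^{(1)}+H$ is a single coset. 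So $|\overline{\mathcal S}|=1$ and $\overline D=\{0\}$. This gives (i) and (ii) in the stronger form ``at most one coset'', and (iii) holds vacuously since its hypothesis $|\overline{\mathcal S}|=2$ never occurs.

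As a cross-check matching the paper's framework, I would also run the argument through the quotient. Lemma~\ref{lem:pushforward_convolution} gives $\bar\nu*\bar\nu=A^2\bar\nu$ in $\C[G/H]$. The pushforward of a positive function is positive on $\overline D$, so the same support argument yields $\overline D+\overline D=\overline D$. Then $\overline D$ is a subgroup stabilizing $\overline D+\overline D$, and Lemma~\ref{lem:sumset_growth}(i) forces $\overline D=\{0\}$. This recovers the conclusion via the tools set up earlier.

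The step needing care, and the main obstacle, is the support identity $\supp(\nu*\nu)=D+D$. It relies entirely on the common-argument hypothesis making every value $\nu(x)=\sum_{\alpha^{(p)}-\alpha^{(q)}=x}c_pc_q$ a positive real. Without it, cancellation could shrink the support, and only the weaker two-coset picture (involving Theorem~\ref{thm:two_point_analysis}) would be expected. I would also double-check the conjugation convention linking $S_\chi$ to $\widehat\mu$. Whichever convention is used, $\{|S_\chi|^2\}$ equals the set $\{\widehat\nu(\chi)\}$ as $\chi$ ranges over $\widehat G$, which is all Lemma~\ref{lem:poly_CG} needs.
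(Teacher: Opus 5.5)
Your proof is correct, and it reaches a strictly stronger conclusion than the statement by a shorter route.

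You share the paper's core mechanism. The common-argument hypothesis makes $\nu=\mu*\widetilde\mu$ nonnegative with support exactly $D$, and the two-level condition yields $\nu*\nu=A^2\nu$ through Lemma~\ref{lem:poly_CG}.

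The paper then pushes this identity to $G/H$ and assumes $|\overline{\mathcal S}|\ge 3$. It invokes Kneser's theorem (via Lemma~\ref{lem:sumset_growth}) to produce a single point $x\in 2\overline D\setminus\overline D$ where the two sides disagree. Part (iii) is handled separately, by applying Theorem~\ref{thm:two_point_analysis} to the two-point pushforward $\bar\mu$.

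You instead compare supports globally in $G$. Nonnegativity gives $\supp(\nu*\nu)=D+D$, so $D+D=D$. Hence $D$ is a subgroup, $H=D$, and $\mathcal S\subseteq\alpha^{(1)}+H$. This dispenses with Kneser, the quotient and the two-point analysis, including the $r=3$ case that the paper only sketches. It shows that (ii) holds with a single coset and that (iii) is vacuous, since $|\overline{\mathcal S}|=2$ never occurs.

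The paper's localized contradiction in fact already contains your conclusion. Since $0\in\overline D$, one has $\overline D\subseteq 2\overline D$ trivially, and the same contradiction rules out every point of $2\overline D\setminus\overline D$. Equality then makes $\overline D$ a subgroup stabilizing $2\overline D$, which forces $\overline D=\{0\}$ by Lemma~\ref{lem:sumset_growth}(i). That is exactly your cross-check.

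What the paper's heavier, pointwise route buys is a template that carries over to Theorem~\ref{thm:overconstrained_multilevel}. There the annihilating polynomial has higher degree and coefficients of both signs, so no global support identity like $D+D=D$ is available.

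Your sharper form also simplifies the downstream use in Theorem~\ref{thm:plateaued_common_arg}. There $H=D(u)$, so hypothesis (b) immediately gives $D(u)=\{0\}$, i.e.\ $|\mathcal S(u)|\le 1$, with no separate order-$2$ argument.
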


\begin{proof}
Define $\mu:G\to\mathbb{C}$ by $\mu(\alpha^{(i)})=z_i$ and $\mu(x)=0$ for $x\notin\mathcal S$. Then for every $\chi\in\widehat G$,
\[
\widehat\mu(\chi)=\sum_{x\in G}\mu(x)\chi(x)=\sum_{i=1}^m z_i\chi(\alpha^{(i)})=S_\chi.
\]
Let $\widetilde\mu(x)=\overline{\mu(-x)}$ and set $\nu=\mu*\widetilde\mu$. Then, by Lemma~\ref{lem:no_cancellation_autocorr},
$\supp(\nu)= D$ and $\nu(0)=\sum_{i=1}^m|z_i|^2>0$. 
Moreover, using $\widehat{f*g}(\chi)=\widehat f(\chi)\widehat g(\chi)$ and $\widehat{\widetilde\mu}(\chi)=\overline{\widehat\mu(\chi)}$, we obtain
\[
\widehat\nu(\chi)=\widehat\mu(\chi)\widehat{\widetilde\mu}(\chi)=|\widehat\mu(\chi)|^2=|S_\chi|^2\in\{0,A^2\},
\]
hence $\widehat\nu(\chi)\in\mathbb{R}_{\ge 0}$ for all $\chi\in\widehat G$.

\smallskip
We establish a polynomial annihilation in $\mathbb{C}[G]$.
Let $B=A^2$ and define
$P(X)=X(X-B)=X^2 - BX$,
so that $P(\widehat\nu(\chi))=0$ for all $\chi\in\widehat G$. By Lemma~\ref{lem:poly_CG},
\[
P(\nu)=\nu^{*2} - B\nu=0\qquad\text{in }\mathbb{C}[G].
\]
By character orthogonality,
\[
\sum_{\chi \in \widehat{G}} 
\left| \sum_{x\in S} a_x \chi(x) \right|^2
= |G| \sum_{x\in S} |a_x|^2 .
\]

Next, we pass to the quotient $G/H$.
Let $H=\Stab(D+D)$ and $\pi:G\to G/H$ be the quotient map. 
Note that characters of $G/H$ are precisely the characters of $G$
that are trivial on $H$, hence the magnitude condition on the
character sums is preserved under this passage.
By Lemma~\ref{lem:pushforward_fourier}, for every $\psi\in\widehat{G/H}$ we have
$\displaystyle\widehat{\bar\nu}(\psi)=\widehat\nu(\psi\circ\pi)\in\{0,B\}.$
Pushing forward the identity $P(\nu)=0$ and using Lemma~\ref{lem:pushforward_convolution} gives
\[
P(\bar\nu)=\bar\nu^{*2} - B\bar\nu=0\qquad\text{in }\mathbb{C}[G/H].
\]
By Lemma~\ref{lem:no_cancellation_autocorr}, $\supp(\nu)=D.$
Summing over fibers preserves positivity, so $\bar\nu(\bar x)>0$ if and only if $\bar x\in\pi(D)=\overline D$, and hence $\supp(\bar\nu)=\overline D.$
From $P(\bar\nu)=0$ we have $\bar\nu^{*2} = B\bar\nu$.
Taking supports and using $\supp(f+g)\subseteq\supp(f)\cup\supp(g)$ and $\supp(\bar\nu^{*2})\subseteq 2\,\supp(\bar\nu)=2\overline D$ yields
\[
\overline D=\supp(\bar\nu)= \supp(B\bar\nu) = \supp(\bar\nu^{*2}) \subseteq 2\overline D.
\]

\smallskip
We now show (i) and (ii).
Assume for contradiction that $|\overline{\mathcal{S}}| \geq 3$. By Lemma~\ref{lem:diffset_size}, $|\overline{D}| \geq 3$. Since $H = \Stab(D+D)$, Lemma~\ref{lem:sumset_growth}(i) gives $\Stab(\overline{D}+\overline{D}) = \{0\}$ in $G/H$. If $|G/H| = 1$, then $|\overline{D}| = 1$, contradicting $|\overline{D}| \geq 3$. Hence $|G/H| > 1$, and $\overline{D} \neq G/H$ (otherwise $\Stab(\overline{D}+\overline{D}) = G/H \neq \{0\}$). Therefore $|\overline{D}| < |G/H|$, and Lemma~\ref{lem:sumset_growth}(iii) gives $|2\overline{D}| > |\overline{D}|$, so there exists $x \in 2\overline{D} \setminus \overline{D}$.

The polynomial identity gives $\bar\nu^{*2} = B\bar\nu$.
Evaluating at $x \in 2\overline{D} \setminus \overline{D}$, we get $\bar{\nu}(x) = 0$ since $x \notin \overline{D} = \supp(\bar{\nu})$, and 
 $\bar{\nu}^{*2}(x) > 0$ by Lemma~\ref{lem:no_cancellation_autocorr} and the fact that $x \in 2\overline{D}$.
Therefore $\bar{\nu}^{*2}(x) = B\bar{\nu}(x) = 0$, contradicting $\bar{\nu}^{*2}(x) > 0$. Hence $|\overline{\mathcal{S}}| \leq 2$, proving (i).
For (ii), note that $\mathcal{S} \subseteq \bigcup_{\bar{s} \in \overline{\mathcal{S}}} \pi^{-1}(\bar{s})$, and each fiber is an $H$-coset. Since $|\overline{\mathcal{S}}| \leq 2$, this is a union of at most $2$ cosets.

\smallskip
We now prove (iii).
Assume $|\overline{\mathcal{S}}|=2$ and $|\overline{D}|\ge 3$.
We must show that every nonzero element of $\overline{D}$ has order at most $2$ in $G/H$.
Suppose for contradiction that there exists some nonzero $\bar\gamma\in\overline{D}$ with order $r'\ge 3$.
Since $\overline{D}=\overline{\mathcal{S}}-\overline{\mathcal{S}}$ and $|\overline{\mathcal{S}}|=2$, 
write $\overline{\mathcal S}=\{\bar\alpha,\bar\beta\}$ with $\bar\alpha\neq\bar\beta$.
Then $\bar\gamma$ can be written as $\bar\gamma=\bar\beta-\bar\alpha$ (possibly after relabeling).

By Lemma~\ref{lem:pushforward_fourier}, for every $\psi\in\widehat{G/H}$ we have
$\widehat{\bar\mu}(\psi)=\widehat\mu(\psi\circ\pi)$,
and since $|S_\chi|\in\{0,A\}$ for all $\chi\in\widehat G$, it follows that
$|\widehat{\bar\mu}(\psi)|\in\{0,A\}$ for all $\psi\in\widehat{G/H}$.
Moreover, the pushforward preserves the common-argument property: writing $z_i=c_ie^{i\theta}$ with $c_i>0$,
we have $\bar\mu(\bar x)=\sum_{\pi(\alpha^{(i)})=\bar x} z_i
=\Big(\sum_{\pi(\alpha^{(i)})=\bar x} c_i\Big)e^{i\theta}$,
so every nonzero value of $\bar\mu$ has argument $\theta$.

Set $w_1=\bar\mu(\bar\alpha)$ and $w_2=\bar\mu(\bar\beta)$; then $w_1,w_2\in\mathbb{C}^*$ and $w_1/w_2\in\mathbb{R}_{>0}$.
For $\psi\in\widehat{G/H}$ we have
\[
\widehat{\bar\mu}(\psi)=w_1\psi(\bar\alpha)+w_2\psi(\bar\beta).
\]
Applying Theorem~\ref{thm:two_point_analysis} in the group $G/H$ to the pair $(\bar\alpha,\bar\beta)$, with $\bar\gamma = \bar\beta - \bar\alpha$ of order $r'\ge 3$,
shows that the set $\{|\widehat{\bar\mu}(\psi)|:\psi\in\widehat{G/H}\}$ contains at least two distinct nonzero values,
except possibly in the exceptional case $r'=3$ with $|w_1|=|w_2|$ and $-w_1/w_2$ a primitive cube root of unity.
But $w_1/w_2\in\mathbb{R}_{>0}$ implies $-w_1/w_2\in\mathbb{R}_{<0}$, which cannot be a primitive cube root of unity.
Hence the exceptional case cannot occur, and therefore $\{|\widehat{\bar\mu}(\psi)|:\psi\in\widehat{G/H}\}$ contains at least two distinct nonzero positive values, say $B_1$ and $B_2$ with $0 < B_1 < B_2$.
This contradicts $|\widehat{\bar\mu}(\psi)|\in\{0,A\}$.
Therefore, no such $\bar\gamma$ with order $\ge 3$ can exist in $\overline{D}$.
\end{proof}

We are also able to show a conditional multi-level extension.
\begin{theorem}
\label{thm:overconstrained_multilevel}
Under the hypotheses of Theorem~\textup{\ref{thm:overconstrained}}, suppose more generally that
$|S_\chi|\in\{0\}\cup\{A_1,\ldots,A_t\}$ for some $t\ge 2$ with distinct $A_j > 0$, and assume additionally
that there exists $x\in 2\overline D\setminus \overline D$ with
$x\notin \ell\,\supp(\bar\nu')$ for all $\ell\ge 3$ (where $\bar\nu = a\delta_0 + \bar\nu'$ with $a > 0$, $\bar\nu'(0) = 0$), and that $P''(a)\neq 0$ where $P(X) = X\prod_{j=1}^t(X - A_j^2)$.
Then $|\overline{\mathcal S}|\le t+1$ and $\mathcal S$ is contained in a union
of at most $t+1$ cosets of $H$.
\end{theorem}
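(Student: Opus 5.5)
The plan is to follow the proof of Theorem~\ref{thm:overconstrained}, replacing the quadratic annihilator by the degree-$(t+1)$ polynomial $P(X)=X\prod_{j=1}^t(X-A_j^2)$. The extra hypotheses are then used to isolate the quadratic term of a Taylor expansion of $P$ around the ``diagonal'' value $a=\bar\nu(0)$.

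First, I set up the objects exactly as before. Let $\mu$ be supported on $\mathcal S$ with values $z_i$, and let $\nu=\mu*\widetilde\mu$. Then $\widehat\nu(\chi)=|S_\chi|^2\in\{0,A_1^2,\dots,A_t^2\}$, so $P(\widehat\nu(\chi))=0$ for every $\chi$. Lemma~\ref{lem:poly_CG} gives $P(\nu)=0$ in $\C[G]$, and Lemma~\ref{lem:pushforward_convolution} gives $P(\bar\nu)=0$ in $\C[G/H]$. By Lemma~\ref{lem:no_cancellation_autocorr} and positivity of fibre sums, $\bar\nu$ takes strictly positive real values exactly on $\overline D$. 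In particular $a=\bar\nu(0)>0$, and $\bar\nu'=\bar\nu-a\delta_0$ is nonnegative with $\supp(\bar\nu')=\overline D\setminus\{0\}$.

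Second, since $\delta_0$ is the identity of $\C[G/H]$ and commutes with everything, I expand $P(a\delta_0+\bar\nu')$ by Taylor's formula:
\[
0=P(\bar\nu)=\sum_{j=0}^{t+1}\frac{P^{(j)}(a)}{j!}\,(\bar\nu')^{*j}.
\]
I evaluate this at the distinguished point $x\in 2\overline D\setminus\overline D$ and check the terms one by one.
\begin{itemize}
\item For $j=0$, the term is $P(a)\delta_0(x)=0$, because $0\in\overline D$ forces $x\neq 0$.
\item For $j=1$, the term vanishes because $x\notin\overline D\supseteq\supp(\bar\nu')$.
\item For $j\ge 3$, each term vanishes because $\supp((\bar\nu')^{*j})\subseteq j\,\supp(\bar\nu')$ and, by hypothesis, $x\notin j\,\supp(\bar\nu')$.
\end{itemize}
Only the $j=2$ term survives, so $0=\tfrac12P''(a)\,(\bar\nu')^{*2}(x)$.

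Third, I show $(\bar\nu')^{*2}(x)>0$. Write $x=d_1+d_2$ with $d_1,d_2\in\overline D$. Neither $d_i$ is $0$, since otherwise $x\in\overline D$. Hence $d_1,d_2\in\supp(\bar\nu')$. The convolution $(\bar\nu')^{*2}(x)$ is a sum of products of nonnegative reals, and it contains the strictly positive term $\bar\nu'(d_1)\bar\nu'(d_2)$, so it is strictly positive. Together with $P''(a)\ne0$, this contradicts the displayed identity.

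Fourth, I interpret the contradiction. The additional hypothesis, as literally stated, already yields a contradiction, so the conclusion holds (vacuously) for every configuration meeting the hypotheses. To present the result in the intended form, I phrase the argument as a proof by contradiction: assume $|\overline{\mathcal S}|\ge t+2$. Then Lemma~\ref{lem:diffset_size} gives $|\overline D|\ge 3$. Lemma~\ref{lem:sumset_growth}(i),(iii) gives $2\overline D\setminus\overline D\neq\emptyset$, by the same argument as in Theorem~\ref{thm:overconstrained}, which rules out $\overline D=G/H$ and $|G/H|=1$. The additional hypothesis then selects a point $x$ in this set that avoids the higher sumsets, and the computation above gives the contradiction. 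Hence $|\overline{\mathcal S}|\le t+1$. The coset statement follows as in part (ii) of Theorem~\ref{thm:overconstrained}, since $\mathcal S\subseteq\pi^{-1}(\overline{\mathcal S})$.

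The main obstacle is the Taylor-expansion step. One must make sure it is legitimate in the commutative algebra $\C[G/H]$, and that no term of order $j\ge3$ can cancel the positive quadratic contribution. The avoidance hypothesis $x\notin\ell\,\supp(\bar\nu')$ is exactly what removes those terms, and the condition $P''(a)\neq0$ prevents the surviving quadratic coefficient from vanishing.
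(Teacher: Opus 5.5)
Your proposal is correct and follows essentially the same route as the paper: the paper expands $\bar\nu^{*k}=\sum_j\binom{k}{j}a^{k-j}(\bar\nu')^{*j}$, which is your Taylor expansion around $a\delta_0$, and evaluates $P(\bar\nu)=0$ at the given $x$ to get $\tfrac12P''(a)\,(\bar\nu')^{*2}(x)=0$. Your remark that this contradiction never actually uses $|\overline{\mathcal S}|\ge t+2$ is accurate: the stated hypotheses are jointly inconsistent, so the theorem holds vacuously, and the paper's proof has the same feature even though it does not remark on it.
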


\begin{proof}
Define $\mu:G\to\C$ by $\mu(\alpha^{(i)})=z_i$ and $\mu(x)=0$ for
$x\notin\mathcal S$, and set $\widetilde\mu(x)=\overline{\mu(-x)}$,
$\nu=\mu*\widetilde\mu$. By Lemma~\ref{lem:no_cancellation_autocorr},
$\supp(\nu)=D$ and $\nu(0)>0$. Moreover,
\[
\widehat\nu(\chi)=|\widehat\mu(\chi)|^2=|S_\chi|^2\in\{0,B_1,\dots,B_t\}.
\]
Let $P(X)=X\prod_{j=1}^t(X-B_j)=\sum_{k=0}^{t+1} c_k X^k$.
Then $P(\widehat\nu(\chi))=0$ for all $\chi\in\widehat G$, and by
Lemma~\ref{lem:poly_CG},
\[
P(\nu)=\sum_{k=0}^{t+1} c_k\,\nu^{*k}=0\qquad\text{in }\C[G].
\]
In particular $c_1=P'(0)=(-1)^t\prod_{j=1}^t B_j\neq 0$.

Let $H=\Stab(D+D)$ and $\pi:G\to G/H$ be the quotient map.
Push forward $\nu$ to $\bar\nu$ on $G/H$. By
Lemma~\ref{lem:pushforward_fourier} and
Lemma~\ref{lem:pushforward_convolution},
\[
\widehat{\bar\nu}(\psi)\in\{0,B_1,\dots,B_t\}
\quad\text{for all }\psi\in\widehat{G/H},
\qquad
P(\bar\nu)=\sum_{k=0}^{t+1} c_k\,\bar\nu^{*k}=0
\quad\text{in }\C[G/H].
\]
By Lemma~\ref{lem:no_cancellation_autocorr},
$\supp(\bar\nu)=\overline D$ and $\bar\nu(0)=a>0$.
Assume for contradiction that $|\overline{\mathcal S}|\ge t+2$.
Then $|\overline D|\ge t+2$ by Lemma~\ref{lem:diffset_size}$,$
and sumset growth (Lemma~\ref{lem:sumset_growth}) yields an element
$x\in 2\overline D\setminus\overline D$ satisfying the hypothesis.
Write $\bar\nu=a\delta_0+\bar\nu'$ with $\bar\nu'(0)=0$.
By hypothesis, $x\notin \ell\,\supp(\bar\nu')$ for all $\ell\ge 3$.
Expanding convolutions,
\[
\bar\nu^{*k}
=\sum_{j=0}^k \binom{k}{j} a^{\,k-j} (\bar\nu')^{*j}.
\]
At the chosen $x$ we have $\bar\nu(x)=0$,
$(\bar\nu')^{*j}(x)=0$ for $j\ge 3$, and
$(\bar\nu')^{*2}(x)>0$ by no cancellation.
Hence for all $k\ge 2$,
\[
\bar\nu^{*k}(x)=\binom{k}{2} a^{\,k-2}(\bar\nu')^{*2}(x).
\]
Evaluating $P(\bar\nu)=0$ at $x$ gives
\[
0=\sum_{k=2}^{t+1} c_k\,\bar\nu^{*k}(x)
= (\bar\nu')^{*2}(x)\sum_{k=2}^{t+1} c_k\binom{k}{2}a^{\,k-2}.
\]
Since $(\bar\nu')^{*2}(x)>0$, we obtain
\[
\sum_{k=2}^{t+1} c_k\binom{k}{2}a^{\,k-2}=0.
\]
But the left–hand side equals $\frac12 P''(a)$, hence $P''(a)=0$,
contradicting the hypothesis. Therefore
$|\overline{\mathcal S}|\le t+1$.
Finally,
$\mathcal S\subseteq\bigcup_{\bar s\in\overline{\mathcal S}}\pi^{-1}(\bar s)$,
and each fiber is an $H$–coset, so $\mathcal S$ is contained in a
union of at most $t+1$ cosets of $H$.
\end{proof}

\begin{remark}
\label{rem:overconstrained_scope}
Theorem~\textup{\ref{thm:overconstrained}} provides a complete, unconditional result for two-level spectra ($t=1$), which suffices for all applications in this paper: gbent functions (Corollary~\textup{\ref{cor:main}}), gplateaued functions (Theorem~\textup{\ref{thm:landscape_necessity}}). 
Theorem~\textup{\ref{thm:overconstrained_multilevel}} extends to multi-level spectra ($t \geq 2$) under additional hypotheses. The condition $x \notin \ell\,\supp(\bar\nu')$ for $\ell \geq 3$ is satisfied when the lower components are $\mathbb{Z}_{2^\ell}$-affine and $|\overline{D}| < \frac{1}{2}|G/H|$. The requirement $P''(a) \neq 0$ represents a genuine constraint on the spectral distribution and can be verified for specific constructions.
\end{remark}

\begin{corollary} 
\label{cor:twolevel_numerology}
Let $G$ be a finite abelian group and let $0\neq f\in\C[G]$.
Assume that
$|\widehat f(\chi)|\in\{0,A\}$ for all $\chi\in\widehat G$,
for some $A>0$. Let $N:=|\supp(\widehat f)|$.
Then
$N\cdot A^2 = |G|\sum_{x\in G}|f(x)|^2$.
In particular,
$|\supp(f)|\ \ge\ \frac{|G|}{N}\ =\ \frac{A^2}{\|f\|_2^2}$.
\end{corollary}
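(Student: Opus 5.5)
The plan is to apply the Parseval identity for finite abelian groups, as recalled before Theorem~\ref{thm:uncertainty}, and then invoke the uncertainty principle of Theorem~\ref{thm:uncertainty} for the inequality.

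First, the identity. Parseval gives
\[
\sum_{x\in G}|f(x)|^2=\frac{1}{|G|}\sum_{\chi\in\widehat G}|\widehat f(\chi)|^2 .
\]
By hypothesis each term $|\widehat f(\chi)|^2$ is either $0$ or $A^2$. It equals $A^2$ exactly on $\supp(\widehat f)$, which has cardinality $N$. So the right-hand sum equals $N A^2/|G|$, and multiplying through by $|G|$ yields $N\cdot A^2=|G|\sum_x|f(x)|^2=|G|\,\|f\|_2^2$. We need $N\ge 1$, which holds since $f\neq 0$ forces $\widehat f\neq 0$ by Fourier inversion. It follows that $A^2>0$ is consistent.

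Second, the inequality. Since $f\neq 0$, Theorem~\ref{thm:uncertainty} gives $|\supp(f)|\cdot|\supp(\widehat f)|\ge |G|$, so $|\supp(f)|\ge |G|/N$. Substituting $N=|G|\,\|f\|_2^2/A^2$ from the first step gives $|G|/N=A^2/\|f\|_2^2$, which is the claimed equality of the two lower bounds.

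There is no real obstacle here; the only point needing care is consistency of normalizations. The paper's Fourier transform uses $\overline{\chi(x)}$, and Parseval holds with the factor $1/|G|$ as stated, so the computation is direct. I would also note that $\|f\|_2^2$ here means the unnormalized sum $\sum_x|f(x)|^2$.
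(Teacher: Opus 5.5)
Your proof is correct and follows the paper's argument exactly. Parseval combined with the two-level hypothesis gives $NA^2=|G|\sum_x|f(x)|^2$, and the uncertainty principle (Theorem~\ref{thm:uncertainty}) gives $|\supp(f)|\ge |G|/N=A^2/\|f\|_2^2$; your remarks that $N\ge 1$ follows from $f\neq 0$ and that $\|f\|_2^2$ is the unnormalized sum are clarifications the paper leaves implicit.
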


\begin{proof}
By Parseval's identity,
$\displaystyle \sum_{\chi\in\widehat G}|\widehat f(\chi)|^2
=
|G|\sum_{x\in G}|f(x)|^2$.
Since $|\widehat f(\chi)|^2$ equals $A^2$ on $\supp(\widehat f)$ and $0$ otherwise,
the left-hand side equals $N A^2$, giving the stated identity.
The lower bound on $|\supp(f)|$ follows from the uncertainty principle
(Theorem~\ref{thm:uncertainty}).
\end{proof}

Below, we point out that two-level magnitudes do not force sparsity.
\begin{proposition}
\label{prop:twolevel_not_sparse}
Let $G$ be a finite abelian group and let $H\le G$ be a nontrivial subgroup.
Define $f=1_H\in\C[G]$.
Then
$|\widehat f(\chi)|\in\{0,|H|\}$, for all $\chi\in\widehat G$,
yet
$|\supp(f)|=|H|$,
which can be arbitrarily large.
In particular, two-level Fourier magnitude constraints alone do not imply
any nontrivial upper bound on $|\supp(f)|$.
\end{proposition}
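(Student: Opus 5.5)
The plan is to compute $\widehat{1_H}$ explicitly using orthogonality of characters restricted to the subgroup $H$. For $\chi\in\widehat G$ we have
\[
\widehat{1_H}(\chi)=\sum_{x\in G}1_H(x)\overline{\chi(x)}=\sum_{h\in H}\overline{\chi(h)}.
\]
The restriction $\chi|_H$ is a character of the finite abelian group $H$. If $\chi|_H$ is trivial, every term equals $1$, so the sum is $|H|$.

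If $\chi|_H$ is nontrivial, choose $h_0\in H$ with $\chi(h_0)\neq 1$. Reindexing $h\mapsto h+h_0$, which is a bijection of $H$, gives $\Sigma=\overline{\chi(h_0)}\,\Sigma$ for $\Sigma=\sum_{h\in H}\overline{\chi(h)}$. Hence $\Sigma=0$. This shows $|\widehat{1_H}(\chi)|\in\{0,|H|\}$, so the two-level hypothesis of Theorem~\ref{thm:overconstrained} holds with $A=|H|$.

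The support statement $|\supp(1_H)|=|H|$ is immediate. For "arbitrarily large", I would take for instance $G=H=\Z_N$, or $G=\F^n$ with $H$ a subspace of dimension $n-1$, and let the order grow.

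I would also note that this is consistent with Theorem~\ref{thm:overconstrained}. Here $\mathcal S=H$, so $D=H$, $D+D=H$, and $\Stab(D+D)=H$; hence $\overline{\mathcal S}$ is a single point. The theorem's conclusion concerns the number of cosets of $H$, not $|\mathcal S|$, so no bound on $|\supp f|$ can follow from two-level magnitudes alone. The "in particular" clause therefore follows directly from the family of examples.

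The only step needing care is the vanishing of the sum for $\chi$ nontrivial on $H$, and that is the standard orthogonality argument above. I expect no real obstacle.
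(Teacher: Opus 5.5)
Your proposal is correct and follows essentially the same route as the paper: compute $\widehat{1_H}(\chi)=\sum_{h\in H}\overline{\chi(h)}$, which equals $|H|$ when $\chi$ is trivial on $H$ and vanishes by the translation-reindexing argument otherwise. Your extra consistency check against Theorem~\ref{thm:overconstrained} ($D=H$, $\Stab(D+D)=H$, so $\overline{\mathcal S}$ is a single point) is correct, and it makes explicit something the paper leaves unsaid: that theorem's conclusion bounds the number of $H$-cosets, not $|\supp f|$.
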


\begin{proof}
For $\chi\in\widehat G$ one has
$\displaystyle \widehat f(\chi)=\sum_{x\in H}\overline{\chi(x)}$.
If $\chi$ is trivial on $H$, then $\overline{\chi(x)}=1$ for all $x\in H$ and hence
$\widehat f(\chi)=|H|$.
If $\chi$ is nontrivial on $H$, choose $h\in H$ with $\chi(h)\neq 1$; then
\[
\widehat f(\chi)=\sum_{x\in H}\overline{\chi(x)}
=\sum_{x\in H}\overline{\chi(x+h)}
=\overline{\chi(h)}\sum_{x\in H}\overline{\chi(x)}
=\overline{\chi(h)}\,\widehat f(\chi),
\]
forcing $\widehat f(\chi)=0$.
Hence $|\widehat f(\chi)|\in\{0,|H|\}$ for all $\chi$.
The claim on the support size of $f$ is immediate.
\end{proof}

\begin{remark}
\label{rem:common_argument_necessity}
The common-argument hypothesis (Definition~\textup{\ref{def:common_argument}}) is
essential in Theorem~\textup{\ref{thm:overconstrained}} and is \emph{not} implied by
the two-level magnitude condition alone, as demonstrated by
Proposition~\textup{\ref{prop:twolevel_not_sparse}}.
In the applications to $2^\ell$-adic decompositions developed in
Section~\textup{\ref{sec:main}}, this hypothesis is satisfied in two complementary
situations.
In the necessity direction (Theorem~\textup{\ref{thm:landscape_necessity}}), the
landscape property of $f$ forces, via linear independence of the basis $\mathcal{B}$,
that for each $u\in\V$ at most one coefficient $C_{\alpha^*(u)}(u)$ is nonzero;
the common-argument condition is then trivially satisfied on a singleton support.
In the sufficiency direction, it is imposed as an explicit structural assumption 
for $s$-gplateaued functions (Theorem~\textup{\ref{thm:plateaued_common_arg}}). 
\end{remark}

\section{The $2^\ell$-adic decomposition of landscape, gbent and gplateaued functions}
\label{sec:main}

Let $n \geq 2$ be an even integer and let $f: \V \to \Znk$ be a generalized Boolean function. Let $k = \ell r$ for positive integers $\ell, r$. We represent $f$ using its $2^\ell$-adic decomposition, that is,
$\displaystyle
f(x) = \sum_{j=0}^{r-1} c_j(x) 2^{j\ell},
$
where each component function $c_j: \V \to \Znl$. Note that each $c_j$ can be expressed in terms of the standard binary components $a_i$,
$c_j(x) = a_{j\ell}(x) + 2a_{j\ell+1}(x) + \cdots + 2^{\ell-1}a_{(j+1)\ell-1}(x)$. 

Recall that a function $f: \V \to \Znk$ is called $s$-gplateaued if $|\WHT_f(u)| \in \{0, 2^{(n+s)/2}\}$ for all $u \in \V$ and gbent if $|\WHT_f(u)|=2^{n/2}$ for all $u \in \V$. By Corollary 16 of~\cite{MTQ17}, $f(x) = \sum_{i=0}^{k-1} a_i(x) 2^i$ is $s$-gplateaued if and only if $f_F=a_{k-1} + F(a_0, \ldots, a_{k-2})$ is $s$-plateaued for every Boolean function $F: \F^{k-1} \to \F$, with the same magnitude of the Fourier transform. In particular, since $f$ is gbent, by the characterization in~\cite{mtqwwf}, every function of the form $a_{k-1}(x) + F(a_0(x), \ldots, a_{k-2}(x))$, where $F: \F^{k-1} \to \F$, is bent. This implies that the functions $f_F$ 
 are also bent. By Theorem 16 of~\cite{mtqwwf}, this shows that $c_{r-1}(x)$ is itself a gbent function from $\V$ to $\Znl$.

\subsection{Partition structure}

We now analyze the structure of the WHT of $f$ by partitioning the domain $\V$ based on the values of the first $r-1$ components. 

\begin{definition} 
\label{def:partition}
Let $f: \V \to \Znk$ (not necessarily gbent). Consider a \emph{$2^\ell$-adic decomposition} of $f$,
$f(x) = c_0(x) + 2^{\ell}c_1(x) + 2^{2\ell}c_2(x) + \cdots + 2^{(r-1)\ell}c_{r-1}(x)$,
where each component function $c_j: \V \to \Znl$. 
For each vector $\alpha = (\alpha_0, \dots, \alpha_{r-2}) \in (\Znl)^{r-1}$, we define the set
$\displaystyle P_{\alpha} = \{x\in \V \,|\, c_0(x)=\alpha_0, c_1(x)=\alpha_1, \dots, c_{r-2}(x)=\alpha_{r-2}\}. $
The collection $\mathcal{P}_f = \{P_{\alpha}\}_{\alpha \in (\Znl)^{r-1}}$ is called the \emph{component-induced partition} of $f$.
\end{definition}
We next show that this partition is well-defined.
\begin{lemma}
\label{lem:partition}
The collection $\mathcal{P}_f = \{P_\alpha | \alpha \in (\Znl)^{r-1}\}$ forms a partition of $\V$. That is:
\begin{enumerate}
\item $\bigcup_{\alpha} P_{\alpha} = \V$,
\item $P_{\alpha} \cap P_{\alpha'} = \emptyset$ for $\alpha \neq \alpha'$.
\end{enumerate}
\end{lemma}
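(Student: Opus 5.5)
The plan is to argue directly from the fact that the component functions $c_0,\dots,c_{r-2}$ are genuine functions $\V\to\Znl$, so each $x\in\V$ has a single well-defined tuple of component values. Concretely, I will define the map
\[
\Phi:\V\to(\Znl)^{r-1},\qquad \Phi(x)=(c_0(x),c_1(x),\dots,c_{r-2}(x)),
\]
and observe that, by Definition~\ref{def:partition}, $P_\alpha=\Phi^{-1}(\alpha)$ for every $\alpha\in(\Znl)^{r-1}$. The lemma then becomes the standard fact that the fibers of a map partition its domain.

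For (1), take any $x\in\V$ and put $\alpha:=\Phi(x)$. Then $x\in P_\alpha$, so $\V\subseteq\bigcup_\alpha P_\alpha$. The reverse inclusion holds because each $P_\alpha$ is by definition a subset of $\V$.

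For (2), suppose $x\in P_\alpha\cap P_{\alpha'}$. Then $\alpha_j=c_j(x)=\alpha'_j$ for every $j=0,\dots,r-2$, which gives $\alpha=\alpha'$. Taking the contrapositive, distinct indices give disjoint sets.

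The only point needing care is that the $c_j$ are well defined and single valued. This follows from the uniqueness of the $2^\ell$-adic expansion of an element of $\Znk$: equivalently, $c_j$ is determined by the binary coordinates $a_{j\ell},\dots,a_{(j+1)\ell-1}$, and those are unique. I do not expect any real obstacle.

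Some $P_\alpha$ may be empty. If a partition is required to have nonempty blocks, the statement should be read as concerning the nonempty members of $\mathcal P_f$; this does not affect (1) or (2).
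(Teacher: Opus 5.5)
Your proof is correct and matches the paper's argument: both show that every $x$ lies in $P_{(c_0(x),\ldots,c_{r-2}(x))}$, and that membership in two cells forces $\alpha_j=c_j(x)=\alpha'_j$ for all $j$. Phrasing it as the fibers of $\Phi$ is only a compact restatement, and your caveat about possibly empty cells is fair; the paper handles this implicitly by noting that empty cells contribute zero to the Walsh sum.
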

\begin{proof}
(1) For any $x \in \V$, the tuple $(c_0(x), \ldots, c_{r-2}(x)) \in (\Znl)^{r-1}$ is well-defined, so $x \in P_\alpha$ for $\alpha = (c_0(x), \ldots, c_{r-2}(x))$. This shows $\V \subseteq \bigcup_{\alpha} P_\alpha$. The reverse inclusion is immediate.

(2) If $x \in P_\alpha \cap P_{\alpha'}$ for $\alpha \neq \alpha'$, then $c_j(x) = \alpha_j = \alpha'_j$ for all $j = 0, \ldots, r-2$, implying $\alpha = \alpha'$, a contradiction.
\end{proof}


With the partition established, we can now rewrite the WHT of $f$ by summing over this partition,
\allowdisplaybreaks
\begin{align*}
&\WHT_f(u) = \sum_{x \in \V} \zetaa{k}^{f(x)}(-1)^{\langle u,x \rangle} = \sum_{x \in \V} \zetaa{k}^{\sum_{j=0}^{r-1} c_j(x) 2^{j\ell}} (-1)^{\langle u,x \rangle} = \sum_{x \in \V} \zetaa{k}^{\sum_{j=0}^{r-2} c_j(x) 2^{j\ell}} \cdot \zetaa{\ell}^{c_{r-1}(x)} (-1)^{\langle u,x \rangle} \\
& = \sum_{\alpha \in (\Znl)^{r-1}} \sum_{x \in P_{\alpha}} \zetaa{k}^{\sum_{j=0}^{r-2} \alpha_j 2^{j\ell}} \cdot \zetaa{\ell}^{c_{r-1}(x)} (-1)^{\langle u,x \rangle} 
= \sum_{\alpha \in (\Znl)^{r-1}} \zetaa{k}^{\sum_{j=0}^{r-2} \alpha_j 2^{j\ell}} \left( \sum_{x \in P_{\alpha}} \zetaa{\ell}^{c_{r-1}(x)} (-1)^{\langle u,x \rangle} \right).
\end{align*}
In the second line, we used the fact that for $x \in P_\alpha$, we have $c_j(x) = \alpha_j$ for $j = 0, \ldots, r-2$ by Definition~\ref{def:partition}. The summation over possibly empty cells $P_\alpha$ is valid since empty cells contribute zero to the sum.
Let $C_\alpha(u) = \sum_{x \in P_{\alpha}} \zetaa{\ell}^{c_{r-1}(x)} (-1)^{\langle u,x \rangle}$. Then 
\begin{equation}\label{eq:WHT_decomp}
\WHT_f(u) = \sum_{\alpha \in (\Znl)^{r-1}} C_\alpha(u) \cdot \zetaa{k}^{\sum_{j=0}^{r-2} \alpha_j 2^{j\ell}}.
\end{equation}

\begin{lemma}
\label{lem:basis}
Let $k = \ell r$ where $\ell \geq 1$ and $r \geq 2$ are integers. The set of complex numbers 
\[
\mathcal{B} = \left\{\zeta_{2^k}^{\sum_{j=0}^{r-2} \alpha_j 2^{j\ell}} \mid \alpha = (\alpha_0, \ldots, \alpha_{r-2}) \in (\Znl)^{r-1}\right\}
\]
is a linearly independent set over $\Q(\zeta_{2^\ell})$ and has cardinality $2^{k-\ell} = 2^{\ell(r-1)}$.
\end{lemma}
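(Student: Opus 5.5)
The plan is to reduce the statement to the standard fact that $[\Q(\zeta_{2^k}):\Q(\zeta_{2^\ell})] = 2^{k-\ell}$, with basis $\{\zeta_{2^k}^e : 0\le e < 2^{k-\ell}\}$. First observe that the exponent map
\[
\alpha=(\alpha_0,\ldots,\alpha_{r-2})\ \longmapsto\ e(\alpha)=\sum_{j=0}^{r-2}\alpha_j 2^{j\ell},
\]
with each $\alpha_j$ represented by an integer in $\{0,\ldots,2^\ell-1\}$, is exactly base-$2^\ell$ expansion with $r-1$ digits. It is therefore a bijection from $(\Znl)^{r-1}$ onto $\{0,1,\ldots,2^{\ell(r-1)}-1\}=\{0,\ldots,2^{k-\ell}-1\}$. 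All these exponents are smaller than $2^k$, so the roots $\zeta_{2^k}^{e(\alpha)}$ are pairwise distinct. This gives $|\mathcal{B}|=2^{k-\ell}$ and identifies $\mathcal{B}$ with $\{\zeta_{2^k}^e: 0\le e<2^{k-\ell}\}$.

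Next I will prove linear independence via degrees. Set $K=\Q(\zeta_{2^\ell})$ and $L=\Q(\zeta_{2^k})=K(\zeta_{2^k})$. Since $[L:\Q]=\varphi(2^k)=2^{k-1}$ and $[K:\Q]=\varphi(2^\ell)=2^{\ell-1}$ (also valid for $\ell=1$, where $K=\Q$), multiplicativity of degrees gives $[L:K]=2^{k-\ell}$. The element $\zeta_{2^k}$ is a root of $X^{2^{k-\ell}}-\zeta_{2^\ell}\in K[X]$, because $\zeta_{2^k}^{2^{k-\ell}}=\zeta_{2^\ell}$. This polynomial has degree $2^{k-\ell}=[L:K]$, so it is the minimal polynomial of $\zeta_{2^k}$ over $K$. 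Hence $1,\zeta_{2^k},\ldots,\zeta_{2^k}^{2^{k-\ell}-1}$ is a $K$-basis of $L$. In particular it is linearly independent over $K$, and by the first step this set is exactly $\mathcal{B}$.

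There is no real obstacle here. The only points to check are the bookkeeping identification of $\mathcal{B}$ with the consecutive powers, and the degree computation, which relies on the irreducibility of cyclotomic polynomials over $\Q$ (a standard fact I will cite). An alternative, more elementary route to linear independence is also available. Suppose $\sum_e b_e\zeta_{2^k}^e=0$ with $b_e\in K$ and $e<2^{k-\ell}$. Then $\zeta_{2^k}$ is a root of a nonzero polynomial over $K$ of degree less than $[L:K]$, which contradicts $[K(\zeta_{2^k}):K]=2^{k-\ell}$. This is the same argument in a different form.
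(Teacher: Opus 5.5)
Your proof is correct and follows essentially the same route as the paper. The cardinality comes from injectivity of the base-$2^\ell$ exponent map, and independence comes from $[\Q(\zeta_{2^k}):\Q(\zeta_{2^\ell})]=2^{k-\ell}$, which rules out a nontrivial $\Q(\zeta_{2^\ell})$-linear relation among the powers $\zeta_{2^k}^e$ with $e<2^{k-\ell}$. Your explicit identification of the exponent set with $\{0,\ldots,2^{k-\ell}-1\}$, and of $X^{2^{k-\ell}}-\zeta_{2^\ell}$ as the minimal polynomial, is what actually makes the argument rigorous; the paper's first remark, that the number of elements matches the degree, would not by itself imply independence.
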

\begin{proof}
First, we verify the cardinality. The map $\alpha \mapsto \sum_{j=0}^{r-2} \alpha_j 2^{j\ell}$ from $(\Znl)^{r-1}$ to $\Znk$ is injective. To see this, we suppose 
$\sum_{j=0}^{r-2} \alpha_j 2^{j\ell} \equiv \sum_{j=0}^{r-2} \alpha'_j 2^{j\ell} \pmod{2^k}$.
Then $\sum_{j=0}^{r-2} (\alpha_j - \alpha'_j) 2^{j\ell} \equiv 0 \pmod{2^k}$.
Since $0 \leq \alpha_j, \alpha'_j < 2^\ell$, we have $|\alpha_j - \alpha'_j| < 2^\ell$, so $|(\alpha_j - \alpha'_j) 2^{j\ell}| < 2^{\ell + j\ell} = 2^{(j+1)\ell}$.
For the sum to be divisible by $2^k = 2^{\ell r}$, consider the $2^\ell$-adic expansion: the term $(\alpha_j - \alpha'_j) 2^{j\ell}$ contributes to digits at positions $j\ell, j\ell+1, \ldots, (j+1)\ell - 1$ in base $2$. Since these ranges do not overlap for different $j$, we must have $\alpha_j - \alpha'_j = 0$ for each $j$. Thus the map is injective, and $|\mathcal{B}| = |(\Znl)^{r-1}| = 2^{\ell(r-1)}=2^{k-\ell}$.

For the linear independence over $\Q(\zeta_{2^\ell})$, note that as $\alpha$ ranges over $(\Znl)^{r-1}$, the exponents $\sum_{j=0}^{r-2} \alpha_j 2^{j\ell}$ form a complete set of coset representatives for  the quotient group $(2^\ell \Z/2^k\Z)$, which is isomorphic to $\Z_{2^{k-\ell}}$.
The field extension $\Q(\zeta_{2^k})$ over $\Q(\zeta_{2^\ell})$ has degree
\[
\left[\Q(\zeta_{2^k}) : \Q(\zeta_{2^\ell})\right] = \frac{\varphi(2^k)}{\varphi(2^\ell)} = \frac{2^{k-1}}{2^{\ell-1}} = 2^{k-\ell}.
\]
As the number of elements in $\mathcal{B}$ is $2^{k-\ell}$, which matches the degree of the field extension $[\Q(\zeta_{2^k}) : \Q(\zeta_{2^\ell})]$, they must be linearly independent.
Alternatively, if there were a nontrivial linear dependence
\[
\sum_{\alpha \in (\Znl)^{r-1}} c_\alpha \zeta_{2^k}^{\sum_{j=0}^{r-2} \alpha_j 2^{j\ell}} = 0
\]
with $c_\alpha \in \Q(\zeta_{2^\ell})$ not all zero, then $\zeta_{2^k}$ would satisfy a polynomial of degree less than $2^{k-\ell}$ over $\Q(\zeta_{2^\ell})$, contradicting the fact that $[\Q(\zeta_{2^k}) : \Q(\zeta_{2^\ell})] = 2^{k-\ell}$.
\end{proof}

\begin{remark}
We make no claim about how many partition cells are non-empty -- this depends on the specific functions $c_0, \ldots, c_{r-2}$ and could range from $1$ (if all components are constant) to $\min\{2^n, 2^{\ell(r-1)}\}$ (if the map $x \mapsto (c_0(x), \ldots, c_{r-2}(x))$ is surjective onto its codomain when possible).
The key property for Corollary~\textup{\ref{cor:main}} is not the number of non-empty cells, but rather that when $f$ is gbent, for each $u \in \V$, exactly one partition cell $P_{\alpha^*(u)}$ contributes non-trivially to $\WHT_f(u)$. This follows from the linear independence established in Lemma~\textup{\ref{lem:basis}}, namely, the complex numbers $\left\{\zeta_{2^k}^{\sum_{j=0}^{r-2} \alpha_j 2^{j\ell}}\right\}_{\alpha \in (\Znl)^{r-1}}$ are linearly independent over $\Q(\zeta_{2^\ell})$.
\end{remark}
 
 \subsection{Characterization theorems}
\label{subsec:landscape}

We begin with the unconditional necessity result, which holds without any structural assumptions on the components. Note that landscape functions include $s$-gplateaued (and therefore gbent), and our characterization theorems apply directly to these type of functions.

\begin{theorem}
\label{thm:landscape_necessity}
Let $k=\ell r$ with $\ell\ge 2$ and $r\ge 2$, and let $m$ be fixed such that 
$2\leq m\leq\ell$. Let $f=\sum_{j=0}^{r-1}2^{j\ell}c_j:\V\to\Znk$ be a generalized 
Boolean function that is landscape over $\Znk$.
For each $\beta=(\beta_0,\ldots,\beta_{r-2})\in(\Znl)^{r-1}$, define 
$f_\beta:\V\to\Znl$ by $f_\beta=c_{r-1}+\sum_{j=0}^{r-2}\beta_j c_j$. 
In general, for a function $F: (\Znl)^{r-1} \to \Z_{2^m}$, define 
$f_F: \V \to \Znl$ by $f_F(x) = c_{r-1}(x) + 2^{\ell-m}\,F(c_0(x),\ldots,c_{r-2}(x))$.
\textit{Note that $f_\beta$ corresponds to the special case where $F$ is linear  and $\ell=m$.}
Then the following hold:
\begin{enumerate}
\item[\textup{(i)}] For every $\beta\in(\Znl)^{r-1}$ and all 
$F: (\Znl)^{r-1} \to \Z_{2^m}$, the functions $f_\beta$ and $f_F$ are landscape 
over $\Znl$ with the same set of landscape levels.
\item[\textup{(ii)}] For all $\beta\in(\Znl)^{r-1}$, $F: (\Znl)^{r-1} \to \Z_{2^m}$, 
and all $u\in\V$, we have 
$|\WHT_{f_\beta}(u)| = |\WHT_{f_F}(u)|=|\WHT_f(u)|$.
\end{enumerate}
\end{theorem}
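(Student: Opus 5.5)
The plan is to use the partition identity \eqref{eq:WHT_decomp} together with the linear independence of Lemma~\ref{lem:basis}. The aim is to show that, for every $u\in\V$, at most one partition coefficient $C_\alpha(u)$ is nonzero. Once this is known, the Walsh transforms of $f_\beta$ and $f_F$ become the same single coefficient multiplied by a root of unity, so all the magnitudes coincide.

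\textbf{Step 1 (re-expressing the transforms of $f_F$ and $f_\beta$).} I will use the partition $\mathcal P_f$ of Definition~\ref{def:partition}. On a cell $P_\alpha$ we have $F(c_0,\ldots,c_{r-2})=F(\alpha)$, so the same computation that produced \eqref{eq:WHT_decomp} gives
\[
\WHT_{f_F}(u)=\sum_{\alpha\in(\Znl)^{r-1}}\zeta_{2^\ell}^{\,2^{\ell-m}F(\alpha)}\,C_\alpha(u)
=\sum_{\alpha}\zeta_{2^m}^{\,F(\alpha)}\,C_\alpha(u).
\]
Similarly,
\[
\WHT_{f_\beta}(u)=\sum_\alpha \zeta_{2^\ell}^{\sum_j\beta_j\alpha_j}\,C_\alpha(u).
\]
Here each $C_\alpha(u)$ lies in $\Z[\zeta_{2^\ell}]\subseteq\Q(\zeta_{2^\ell})$.

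\textbf{Step 2 (the main step: isolating one coefficient).} Fix $u\in\supp(\WHT_f)$ with $|\WHT_f(u)|=2^{m_u/2}v_u$. By Theorem~\ref{reg-landscape}, since $k=\ell r\ge 4>2$, we can write
\[
\WHT_f(u)=2^{m_u/2}v_u\,\zeta_{2^k}^{f^*(u)}.
\]
I then split the exponent as
\[
f^*(u)=\sum_{j=0}^{r-2}\alpha^*_j2^{j\ell}+2^{(r-1)\ell}t, \qquad \alpha^*\in(\Znl)^{r-1},\ t\in\Znl,
\]
so that $\zeta_{2^k}^{f^*(u)}=\zeta_{2^\ell}^{t}\,b_{\alpha^*}$ with $b_{\alpha^*}\in\mathcal B$.

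\begin{itemize}
\item \emph{Case $2^{m_u/2}\in\Q(\zeta_{2^\ell})$.} This covers $m_u$ even, and also $m_u$ odd when $\ell\ge 3$, because $\sqrt2=\zeta_8+\zeta_8^{-1}$. Equating the expansion \eqref{eq:WHT_decomp} with $2^{m_u/2}v_u\zeta_{2^\ell}^t b_{\alpha^*}$ in the basis $\mathcal B$ over $\Q(\zeta_{2^\ell})$ (Lemma~\ref{lem:basis}) forces $C_{\alpha^*}(u)=2^{m_u/2}v_u\zeta_{2^\ell}^t$ and $C_\alpha(u)=0$ for all $\alpha\ne\alpha^*$.
\item \emph{Case $\ell=2$, $m_u$ odd.} This is the step I expect to be the main obstacle. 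There $\sqrt2\notin\Q(i)$, so I will instead rewrite $\sqrt2\,\zeta_{2^k}^{f^*(u)}$ as $\zeta_{2^k}^{f^*(u)}\bigl(\zeta_8+\zeta_8^{-1}\bigr)$ with $\zeta_8=\zeta_{2^k}^{2^{k-3}}$. I then need to check whether the landscape hypothesis (or a parity and norm argument comparing $\sum_\alpha|C_\alpha(u)|^2$ with $|\WHT_f(u)|^2$) still forces a single nonzero $C_\alpha(u)$. I will try this first, and if it fails I will restrict the claim to the situation where $2^{m_u/2}$ lies in the base field.
\item \emph{$u\notin\supp(\WHT_f)$.} Then $\WHT_f(u)=0$, and linear independence gives $C_\alpha(u)=0$ for all $\alpha$.
\end{itemize}

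\textbf{Step 3 (conclusion).} Substituting the result of Step 2 into the formulas of Step 1 gives
\[
\WHT_{f_F}(u)=\zeta_{2^m}^{F(\alpha^*)}C_{\alpha^*}(u), \qquad \WHT_{f_\beta}(u)=\zeta_{2^\ell}^{\langle\beta,\alpha^*\rangle}C_{\alpha^*}(u),
\]
both of modulus $|C_{\alpha^*}(u)|=|\WHT_f(u)|$. This proves (ii). Since the magnitudes agree pointwise, the supports coincide and the multisets of magnitudes $\{2^{m_i/2}v_i\}$ coincide. Hence $f_\beta$ and $f_F$ are landscape over $\Znl$ with the same levels as $f$, which is (i).
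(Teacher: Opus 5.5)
Your architecture is the same as the paper's: the partition identity, Theorem~\ref{reg-landscape} (valid since $k\ge 4$), Lemma~\ref{lem:basis}, and then the collapse of $\WHT_{f_\beta}$ and $\WHT_{f_F}$ to a single term. However, Step~2 has a genuine gap. For $\ell=2$ and $m_u$ odd you never show that only one $C_\alpha(u)$ is nonzero. Your fallback, restricting to $2^{m_u/2}\in\Q(\zeta_{2^\ell})$, would prove a strictly weaker theorem. This case lies within the statement's scope and is not vacuous: for instance $f(x_1,x_2)=4x_1$ into $\Z_{16}$ (so $\ell=r=2$) is landscape with $|\WHT_f(u)|\in\{0,2^{3/2}\}$. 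The norm comparison you float would not close the case either. A relation between $\sum_\alpha|C_\alpha(u)|^2$ and $|\WHT_f(u)|^2$ is perfectly compatible with several nonzero coefficients.

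The missing idea is to put the $\sqrt2$ into the basis element rather than into the coefficient, using $\sqrt2=(1-i)\zeta_8$. This gives $\sqrt2\,\zeta_{2^k}^{f^*(u)}=(1-i)\,\zeta_{2^k}^{f^*(u)+2^{k-3}}$, and $1-i\in\Q(i)\subseteq\Q(\zeta_{2^\ell})$.

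Your own rewriting $\sqrt2=\zeta_8+\zeta_8^{-1}$ already leads there:
\begin{itemize}
\item The exponents $f^*(u)\pm 2^{k-3}$ differ by $2^{k-2}$, which is a multiple of $2^{(r-1)\ell}=2^{k-\ell}$ because $\ell\ge 2$.
\item So both terms have the same lower digit vector $\alpha^*$, and $\zeta_{2^k}^{f^*(u)-2^{k-3}}=-i\,\zeta_{2^k}^{f^*(u)+2^{k-3}}$.
\item Writing $f^*(u)+2^{k-3}=\sum_{j=0}^{r-2}\alpha^*_j2^{j\ell}+2^{(r-1)\ell}t'$ yields $\WHT_f(u)=2^{(m_u-1)/2}v_u(1-i)\,\zeta_{2^\ell}^{t'}\,b_{\alpha^*}$.
\item The prefactor lies in $\Q(\zeta_{2^\ell})$, so Lemma~\ref{lem:basis} forces $C_{\alpha^*}(u)$ to be the unique nonzero coefficient. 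Its modulus is $2^{(m_u-1)/2}v_u\sqrt2=|\WHT_f(u)|$.
\end{itemize}

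This is exactly the paper's Case~B, which writes $\sqrt2=\zeta_{2^k}^{2^{k-3}}-\zeta_{2^k}^{2^{k-2}+2^{k-3}}$ and observes that the two exponents share their lower $r-1$ digits. It handles every odd $m_u$ and every $\ell\ge 2$ at once, so no separate $\ell\ge 3$ argument is needed. With this inserted, your Step~3 goes through unchanged.
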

\begin{proof}
Since $f$ is a landscape function with levels $\{(m_1,v_1),\ldots,(m_t,v_t)\}$, we have
$|\WHT_f(u)| \in \{0, 2^{m_1/2}v_1,\ldots,2^{m_t/2}v_t\}$ for all $u \in \V$.
Let $\Lambda = \{2^{m_1/2}v_1,\ldots,2^{m_t/2}v_t\}$.
From Equation~\eqref{eq:WHT_decomp}, for any $u\in\V$ we can write
\[
\WHT_f(u) = \sum_{\alpha \in (\Znl)^{r-1}} C_\alpha(u)\,
\zetaa{k}^{\sum_{j=0}^{r-2} \alpha_j 2^{j\ell}},
\]
where
\[
C_\alpha(u)=\sum_{x\in P_\alpha}\zetaa{\ell}^{c_{r-1}(x)}(-1)^{\langle u,x\rangle},
\quad
P_\alpha=\{x\in\V: c_j(x)=\alpha_j\ \text{for }j=0,\dots,r-2\}
\]
and $\{P_\alpha\}_\alpha$ partitions $\V$ (Lemma~\ref{lem:partition}).
By Lemma~\ref{lem:basis}, the family
$\bigl\{\zetaa{k}^{\sum_{j=0}^{r-2}\alpha_j2^{j\ell}}\bigr\}_{\alpha\in(\Znl)^{r-1}}$
is linearly independent over $\Q(\zetaa{\ell})$.
Fix $u\in\V$. If $\WHT_f(u)=0$, then by linear independence we must have
$C_\alpha(u)=0$ for all $\alpha$.
Assume now $\WHT_f(u)\neq 0$. Since $f$ is landscape and $k=\ell r$ with
$\ell\ge 2$ and $r\ge 2$, we have $k\ge 4$ and by
Theorem~\ref{reg-landscape} there exist a level $(m_*,v_*)$ and some $\rho_u\in\Znk$
such that
$\WHT_f(u)=2^{m_*/2}v_*\,\zetaa{k}^{\rho_u}$.
We show that in \emph{all} cases this forces exactly one coefficient $C_\alpha(u)$
to be nonzero.

\smallskip
\noindent
\textbf{Case A: $m_*$ even.}
Write the (unique) $2^\ell$-adic decomposition of $\rho_u$ as
$\displaystyle \rho_u=\sum_{j=0}^{r-2}\beta_j2^{j\ell}+\beta_{r-1}2^{(r-1)\ell}$, $\beta_j\in\Znl$.
Then
$\zetaa{k}^{\rho_u}
=\zetaa{k}^{\sum_{j=0}^{r-2}\beta_j2^{j\ell}}\cdot\zetaa{\ell}^{\beta_{r-1}}$,
so
$\displaystyle \WHT_f(u)=\Bigl(2^{m_*/2}v_*\zetaa{\ell}^{\beta_{r-1}}\Bigr)\cdot
\zetaa{k}^{\sum_{j=0}^{r-2}\beta_j2^{j\ell}}$.
Since the prefactor lies in $\Q(\zetaa{\ell})$ and the basis elements are
$\Q(\zetaa{\ell})$-linearly independent, we must have exactly one nonzero
coefficient, namely for
$\alpha^*(u)=(\beta_0,\ldots,\beta_{r-2})\in(\Znl)^{r-1}$,
with
$C_{\alpha^*(u)}(u)=2^{m_*/2}v_*\zetaa{\ell}^{\beta_{r-1}}$, $C_\alpha(u)=0$,  for all $\alpha\neq\alpha^*(u)$.

\smallskip
\noindent
\textbf{Case B: $m_*$ odd.}
Write $2^{m_*/2}=2^{(m_*-1)/2}\sqrt2$ and use
$\sqrt2=\zetaa{k}^{2^{k-3}}-\zetaa{k}^{2^{k-2}+2^{k-3}}$ (valid for $k\ge 4$) to obtain
\[
\WHT_f(u)
=2^{(m_*-1)/2}v_*\Bigl(\zetaa{k}^{\rho_u+2^{k-3}}-\zetaa{k}^{\rho_u+2^{k-2}+2^{k-3}}\Bigr).
\]
Let
$E'=\rho_u+2^{k-3}$, $E''=\rho_u+2^{k-2}+2^{k-3}$.
Since $k=\ell r$, we have $(r-1)\ell=k-\ell$. Moreover, because $\ell\ge 2$,
$E''-E'=2^{k-2}\equiv 0\pmod{2^{k-\ell}}$.
Therefore $E'\equiv E''\pmod{2^{(r-1)\ell}}$, i.e.\ the lower $(r-1)$
$2^\ell$-adic digits coincide. Equivalently, if we write
\begin{align*}
E'&=\sum_{j=0}^{r-2}\beta_j2^{j\ell}+\beta'_{r-1}2^{(r-1)\ell},\quad
E''=\sum_{j=0}^{r-2}\beta_j2^{j\ell}+\beta''_{r-1}2^{(r-1)\ell},
\end{align*}
then the vectors of lower digits are the same (call it $\alpha^*(u)=(\beta_0,\dots,\beta_{r-2})$),
and only the top digits $\beta'_{r-1},\beta''_{r-1}\in\Znl$ may differ.
Thus
\[
\zetaa{k}^{E'}=\zetaa{k}^{\sum_{j=0}^{r-2}\beta_j2^{j\ell}}\zetaa{\ell}^{\beta'_{r-1}},
\qquad
\zetaa{k}^{E''}=\zetaa{k}^{\sum_{j=0}^{r-2}\beta_j2^{j\ell}}\zetaa{\ell}^{\beta''_{r-1}},
\]
and hence
\[
\WHT_f(u)=2^{(m_*-1)/2}v_*
\Bigl(\zetaa{\ell}^{\beta'_{r-1}}-\zetaa{\ell}^{\beta''_{r-1}}\Bigr)\cdot
\zetaa{k}^{\sum_{j=0}^{r-2}\beta_j2^{j\ell}}.
\]
Again the prefactor lies in $\Q(\zetaa{\ell})$, so linear independence implies
that \emph{exactly one} coefficient is nonzero:
\[
C_{\alpha^*(u)}(u)=2^{(m_*-1)/2}v_*
\Bigl(\zetaa{\ell}^{\beta'_{r-1}}-\zetaa{\ell}^{\beta''_{r-1}}\Bigr),
\qquad
C_\alpha(u)=0\ \text{for all }\alpha\neq\alpha^*(u).
\]
\smallskip
We have shown: for every $u\in\V$, there exists $\alpha^*(u)\in(\Znl)^{r-1}$
such that either $\WHT_f(u)=0$ and all $C_\alpha(u)=0$, or else
$C_{\alpha^*(u)}(u)\neq 0$ and $C_\alpha(u)=0$ for $\alpha\neq\alpha^*(u)$.
Now fix $\beta=(\beta_0,\dots,\beta_{r-2})\in(\Znl)^{r-1}$ and consider
$f_\beta=c_{r-1}+\sum_{j=0}^{r-2}\beta_jc_j$. Its Walsh--Hadamard transform satisfies
\begin{equation}
\label{eq:fbeta}
\WHT_{f_\beta}(u)
=\sum_{x\in\V}\zetaa{\ell}^{c_{r-1}(x)+\sum_{j=0}^{r-2}\beta_jc_j(x)}(-1)^{\langle u,x\rangle}
=\sum_{\alpha\in(\Znl)^{r-1}}\zetaa{\ell}^{\sum_{j=0}^{r-2}\beta_j\alpha_j}\,C_\alpha(u).
\end{equation}
Similarly, in general for any $F:(\Znl)^{r-1}\to\Z_{2^m}$ and any $u\in\V$, 
using $\zetaa{\ell}^{2^{\ell-m}s}=\zeta_{2^m}^s$ gives
\[
\WHT_{f_F}(u)
= \sum_{\alpha\in(\Znl)^{r-1}}\zeta_{2^m}^{F(\alpha)}\,C_\alpha(u),
\]
By the sparsity established above, these sums collapses:
if $\WHT_f(u)=0$ then all $C_\alpha(u)=0$ and $\WHT_{f_\beta}(u)=\WHT_{f_F}(u)=0$.
Otherwise,
\[
\WHT_{f_\beta}(u)=\zetaa{\ell}^{\langle\beta,\alpha^*(u)\rangle}\,C_{\alpha^*(u)}(u),
\text{ so }
|\WHT_{f_\beta}(u)|=|C_{\alpha^*(u)}(u)|=|\WHT_f(u)|
\]
and
\[
\WHT_{f_F}(u)=\zeta_{2^m}^{F({\alpha^*(u)})}\,C_{\alpha^*(u)}(u),
\text{ so }
|\WHT_{f_F}(u)|=|C_{\alpha^*(u)}(u)|=|\WHT_f(u)|.
\]

Consequently, for every $u\in\V$ and every $\beta\in(\Znl)^{r-1}$ we have
$|\WHT_{f_F}(u)|=|\WHT_{f_\beta}(u)|\in \{0\}\cup\Lambda$ and in fact the multiset of
magnitudes of $\WHT_{f_\beta}$ and in general of $\WHT_{f_F}(u)$ equal that of $\WHT_f$.
Therefore each $f_\beta, f_F$ is landscape over $\Znl$ with the same levels as $f$,
proving~\textup{(i)}. Moreover $|\WHT_{f_\beta}(u)|=|\WHT_{f_F}(u)|=|\WHT_f(u)|$ for all $u$,
which implies~\textup{(ii)}.
\end{proof}

\begin{remark}
The assumption $\ell\ge 2$ ensures that in the odd-level case the two-term
representation of $\sqrt2$ over $\Q(\zeta_{2^k})$ collapses into a single basis element
over $\Q(\zeta_{2^\ell})$, so no additional landscape levels arise in the
derived functions $f_\beta$. Note that in \cite{MTQ17}, it is proven for $\ell=1$ and
$s$-gplateaued functions that if $f$ is $s$-gplateaued and $n+s$ is odd (corresponding
to $t=1,v_1=1$ and $m_1$ odd), then $f_\beta$ is $(s+1)$-gplateaued. In our case,
however, the magnitude is preserved.
\end{remark}

We now present an  unconditional 
sufficiency condition 
in terms of a small subset of the full family of derived functions indexed by arbitrary maps $F:(\Znl)^{r-1}\to\Z_{2^m}$, $2\leq m\leq\ell$.

\begin{theorem}
\label{thm:landscape_iff}
Let $k=\ell r$ with $\ell \geq 2$ and $r \geq 2$, and let $m$ be  fixed such that $2\leq m\leq\ell$.
For a function $F: (\Znl)^{r-1} \to \Z_{2^m}$, we denote $f_F: \V \to \Znl$ by
$f_F(x) = c_{r-1}(x) + 2^{\ell-m}\,F(c_0(x),\ldots,c_{r-2}(x))$.
If  $c_{r-1}$ is landscape over $\Znl$ and, for every $F=a\cdot\mathbf{1}_{\{\alpha^{(0)}\}}$, $\alpha^{(0)}\in(\Znl)^{r-1}$,  
with $a\in\{1,2^{m-1}\}$,  
the function $f_F$ is landscape over $\Znl$
with $|\WHT_{f_F}(u)| = |\WHT_{c_{r-1}}(u)|$ for all $u\in\V$, then  $f$ is landscape over $\Znk$ with the same levels, and  $|\WHT_{f}(u)| = |\WHT_{c_{r-1}}(u)|$ for all $u\in\V$.
\end{theorem}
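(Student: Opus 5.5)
The plan is to work pointwise in $u$ and use the two families of test maps $F=a\cdot\mathbf 1_{\{\alpha^{(0)}\}}$ to show that, for each $u$, at most one coefficient $C_\alpha(u)$ is nonzero. This is the same sparsity that drove Theorem~\ref{thm:landscape_necessity}, but here it is extracted from the hypotheses rather than from the landscape property of $f$. Once that is done, \eqref{eq:WHT_decomp} collapses to a single term, and the magnitudes of $\WHT_f$ and $\WHT_{c_{r-1}}$ coincide.

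\textbf{Setup.} Fix $u\in\V$ and write $C_\alpha=C_\alpha(u)$ as in the partition analysis. Since the cells $P_\alpha$ partition $\V$ (Lemma~\ref{lem:partition}),
\[
S:=\WHT_{c_{r-1}}(u)=\sum_{\alpha} C_\alpha .
\]
As in the proof of Theorem~\ref{thm:landscape_necessity}, $\WHT_{f_F}(u)=\sum_\alpha \zeta_{2^m}^{F(\alpha)}C_\alpha$. For $F=a\mathbf 1_{\{\alpha^{(0)}\}}$ this gives
\[
\WHT_{f_F}(u)=S+(\zeta_{2^m}^{a}-1)\,C,\qquad C:=C_{\alpha^{(0)}} .
\]

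\textbf{Step 1: $a=2^{m-1}$.} Here $\zeta_{2^m}^a=-1$. The hypothesis $|S-2C|=|S|$ expands to
\[
|C|^2=\mathrm{Re}(C\bar S).
\]

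\textbf{Step 2: $a=1$.} Put $\theta=2\pi/2^m$ and $w=\zeta_{2^m}-1=(\cos\theta-1)+i\sin\theta$. The hypothesis $|S+wC|=|S|$ expands to
\[
2\,\mathrm{Re}(wC\bar S)+|w|^2|C|^2=0 .
\]
Write $C\bar S=x+iy$, so that $x=|C|^2$ by Step~1. Using $|w|^2=2-2\cos\theta$, the identity becomes
\[
2(\cos\theta-1)x-2\sin\theta\,y+(2-2\cos\theta)x=0,
\]
that is, $\sin\theta\, y=0$. Since $m\ge 2$, we have $\sin\theta\neq 0$, hence $y=0$. Therefore $C\bar S=|C|^2$ for every $\alpha^{(0)}$. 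This is the key computational step, and the only place where $m\ge2$ is needed: for $m=1$ the two test maps coincide, and the imaginary-part information is lost.

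\textbf{Step 3: sparsity.} If $S=0$, then $|C_\alpha|^2=0$ for all $\alpha$, so every $C_\alpha(u)=0$. If $S\ne0$, then
\[
C_\alpha=t_\alpha S,\qquad t_\alpha=\frac{|C_\alpha|^2}{|S|^2}\ge 0 .
\]
Substituting back gives $|C_\alpha|^2=t_\alpha^2|S|^2$, so $t_\alpha=t_\alpha^2$ and each $t_\alpha\in\{0,1\}$. Summing $C_\alpha=t_\alpha S$ over $\alpha$ gives $\sum_\alpha t_\alpha=1$. Hence exactly one index $\alpha^*(u)$ has $C_{\alpha^*(u)}=S$, and all other $C_\alpha$ vanish.

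\textbf{Step 4: conclusion.} Plugging this into \eqref{eq:WHT_decomp} gives either $\WHT_f(u)=0=S$, or
\[
\WHT_f(u)=S\cdot\zetaa{k}^{\sum_{j=0}^{r-2}\alpha^*_j2^{j\ell}} .
\]
In both cases $|\WHT_f(u)|=|\WHT_{c_{r-1}}(u)|$ for all $u$. Since $c_{r-1}$ is landscape over $\Znl$, its magnitude set has the form $\{2^{m_i/2}v_i\}$, and this set is transferred verbatim to $f$. So $f$ is landscape over $\Znk$ with the same levels.

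I expect no serious obstacle. The only delicate point is Step~2, where one must check that the two test families yield independent real constraints and so pin down the phase of $C\bar S$.
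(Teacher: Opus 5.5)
Your proof is correct and follows the paper's argument essentially step for step. The probe $a=2^{m-1}$ gives $\mathrm{Re}(C\bar S)=|C|^2$, and the probe $a=1$, using $\sin(2\pi/2^m)>0$, removes the imaginary part; the resulting $C_\alpha=t_\alpha S$ with $t_\alpha\in\{0,1\}$ and $\sum_\alpha t_\alpha=1$ then gives the $1$-sparsity that collapses \eqref{eq:WHT_decomp}. The only cosmetic difference is the case $S=0$: you handle it with the $a=2^{m-1}$ identity, while the paper uses the $a=1$ probe there.
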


\begin{remark}
The most natural value of $m$ to choose is $m=\ell$, where $F:(\Znl)^{r-1}\to\Znl$ and
$f_F=c_{r-1}+F(c_0,\ldots,c_{r-2})$ is a direct generalization of the binary
characterization in Theorem~\textup{\ref{thm:binary_char}}. 
Compared with Theorem~\textup{\ref{thm:binary_char}}, which requires checking
$2^{2^{k-1}}$ Boolean functions $F:\mathbb{F}_2^{k-1}\to\mathbb{F}_2$,
the sufficiency condition here requires only $2\cdot 2^{\ell(r-1)} = 2^{k-\ell+1}$
one-hot functions $F = a\cdot\mathbf{1}_{\{\alpha^{(0)}\}}$, with
$\alpha^{(0)}\in(\Znl)^{r-1}$ and $a\in\{1,2^{m-1}\}$, plus the function $c_{r-1}$.
This represents an exponential reduction in the number of functions to check:
$2^{k-\ell+1}+1$ versus $2^{2^{k-1}}$. 
 In fact, we only need $\alpha^{(0)}\in \mathrm{Image}(c_0,\ldots,c_{r-2})$ (image of the argument), and,  as we will see in Remark~\textup{\ref{rem:singleton_cells}}, the $c_0,\ldots,c_{r-2}$ cannot be injective for $f$ to be landscape, and therefore we only need to check $1+2|\mathrm{Image}(c_0,\ldots,c_{r-2})|<2^{k-\ell+1}+1$.
\end{remark}

\begin{proof}[Proof of Theorem~\textup{\ref{thm:landscape_iff}}]
For any $F:(\Znl)^{r-1}\to\Z_{2^m}$ and any $u\in\V$, grouping
$\WHT_{f_F}(u)$ by the partition $\{P_\alpha\}$ of
Definition~\ref{def:partition} and using $\zetaa{\ell}^{2^{\ell-m}s}=\zeta_{2^m}^s$
gives
\[
\WHT_{f_F}(u)
= \sum_{\alpha\in(\Znl)^{r-1}}\zeta_{2^m}^{F(\alpha)}\,C_\alpha(u),
\]
where $C_\alpha(u)=\sum_{x\in P_\alpha}\zetaa{\ell}^{c_{r-1}(x)}
(-1)^{\langle u,x\rangle}$.

Assume $c_{r-1}$ is landscape over $\Znl$, and every function $F:\mathbb{F}_2^{k-1}\to\mathbb{F}_2$ defined by $F=a\cdot\mathbf{1}_{\{\alpha^{(0)}\}}$, $\alpha^{(0)}\in(\Znl)^{r-1}$, and $a\in\{1,2^{m-1}\}$ 
is landscape over $\Znl$ with
$|\WHT_{f_F}(u)|=|\WHT_{c_{r-1}}(u)|$ for all $u\in\V$.
Fix $u\in\V$, write $W:=\WHT_{c_{r-1}}(u)=\sum_\alpha C_\alpha(u)$, and $M:=|W|^2$. 

\medskip
\noindent\textbf{Case 1: $M=0$.}
For each $\alpha^{(0)}\in(\Znl)^{r-1}$, choose  
$a=1$, giving $\WHT_{f_{F^{(0)}}}(u)=W+(\zeta_{2^m}-1)C_{\alpha^{(0)}}(u)$.
The hypothesis gives $|W+(\zeta_{2^m}-1)C_{\alpha^{(0)}}(u)|^2=M=0$,
so  
$(\zeta_{2^m}-1)C_{\alpha^{(0)}}(u)=0$.
Since $\zeta_{2^m}\neq 1$ for $m\geq 1$, we get $C_{\alpha^{(0)}}(u)=0$.
As $\alpha^{(0)}$ was arbitrary, all $C_\alpha(u)=0$ and $\WHT_f(u)=0$.

\medskip
\noindent\textbf{Case 2: $M>0$.}
For each $\alpha^{(0)}\in(\Znl)^{r-1}$ and each $a\in\Z_{2^m}$,
denote by ${F^{(0,a)}}=a\cdot\mathbf{1}_{\{\alpha^{(0)}\}}$, 
so that
$\WHT_{f_{F^{(0,a)}}}(u)=W+(\zeta_{2^m}^a-1)C_{\alpha^{(0)}}(u)$.
The hypothesis gives
$|W+(\zeta_{2^m}^a-1)C_{\alpha^{(0)}}(u)|^2=M$ for all $a\in\Z_{2^m}$.
Expanding and cancelling $|W|^2=M$, we get
\begin{equation}
\label{eq:probe_identity}
2(\cos\theta_a-1)\,\mathrm{Re}(\overline{W}C_{\alpha^{(0)}})
-2\sin\theta_a\,\mathrm{Im}(\overline{W}C_{\alpha^{(0)}})
+(2-2\cos\theta_a)|C_{\alpha^{(0)}}|^2 = 0,
\end{equation}
where $\theta_a=2\pi a/2^m$.
Taking $a=2^{m-1}$ in \eqref{eq:probe_identity},
so $\theta_{2^{m-1}}=\pi$ and $\sin\theta_{2^{m-1}}=0$, we obtain
\begin{equation}
\label{eq:real_part}
\mathrm{Re}(\overline{W}C_{\alpha^{(0)}}) = |C_{\alpha^{(0)}}|^2.
\end{equation} 
Substituting \eqref{eq:real_part} into \eqref{eq:probe_identity} gives
$-2\sin\theta_a\,\mathrm{Im}(\overline{W}C_{\alpha^{(0)}})=0$.
Taking $a=1$, and since $\sin(2\pi/2^m)>0$
for $m> 1$,  this gives
 \begin{equation}
 \label{eq:imag_part}
\mathrm{Im}(\overline{W}C_{\alpha^{(0)}}) = 0.
\end{equation}
Together, \eqref{eq:real_part} and \eqref{eq:imag_part} give
$\overline{W}C_{\alpha^{(0)}}=|C_{\alpha^{(0)}}|^2\in\R_{\geq 0}$ for every $\alpha^{(0)}$.
Since $W\neq 0$, this means $C_{\alpha^{(0)}}=\lambda_{\alpha^{(0)}}W$ where
$\lambda_{\alpha^{(0)}}:=|C_{\alpha^{(0)}}|^2/|W|^2\geq 0$.
Substituting back into \eqref{eq:real_part}:
$\lambda_{\alpha^{(0)}}|W|^2=\lambda_{\alpha^{(0)}}^2|W|^2$,
so $\lambda_{\alpha^{(0)}}(\lambda_{\alpha^{(0)}}-1)=0$, giving
$\lambda_{\alpha^{(0)}}\in\{0,1\}$ for each $\alpha^{(0)}$.
Finally, summing over all $\alpha^{(0)}$ and using $\sum_{\alpha^{(0)}}C_{\alpha^{(0)}}=W$, we get
\[
W = \sum_{\alpha^{(0)}}\lambda_{\alpha^{(0)}}W = W\sum_{\alpha^{(0)}}\lambda_{\alpha^{(0)}},
\]
so $\sum_{\alpha^{(0)}}\lambda_{\alpha^{(0)}}=1$ (since $W\neq 0$).
Since each $\lambda_{\alpha^{(0)}}\in\{0,1\}$ and they sum to $1$, exactly one
index $\alpha^*(u)$ satisfies $\lambda_{\alpha^*(u)}=1$, giving
$C_{\alpha^*(u)}(u)=W$ and $C_\alpha(u)=0$ for all $\alpha\neq\alpha^*(u)$.

\medskip
In both cases, for every $u\in\V$ at most one $C_\alpha(u)$ is nonzero.
If all $C_\alpha(u)=0$ then $\WHT_f(u)=0$.
Otherwise the unique nonzero coefficient satisfies
$C_{\alpha^*(u)}(u)=W=\WHT_{c_{r-1}}(u)$, so
\[
|\WHT_f(u)|
=\left|C_{\alpha^*(u)}(u)\cdot
\zetaa{k}^{\sum_{j=0}^{r-2}\alpha^*_j(u)2^{j\ell}}\right|
=|C_{\alpha^*(u)}(u)|=|\WHT_{c_{r-1}}(u)|\in\Lambda,
\]
where $\Lambda$ is the landscape level set of $c_{r-1}$.
Since $u\in\V$ was arbitrary and $|\WHT_f(u)|\in\{0\}\cup\Lambda$ for all
$u\in\V$, with $\Lambda$ consisting of values of the form $2^{m_i/2}v_i$
inherited from the landscape property of $c_{r-1}$ over $\Znl$,
it follows that $f$ is landscape over $\Znk$ with the same levels.
\end{proof}

\begin{corollary}
\label{cor:sparsity}
Let $k = \ell r$ with $\ell \geq 2$ and $r \geq 2$, and let 
$f = \sum_{j=0}^{r-1} 2^{j\ell} c_j : \V \to \Znk$ 
be a generalized Boolean function. For each $u \in \V$ and 
$\alpha \in (\Znl)^{r-1}$, define $
C_\alpha(u) = \sum_{x \in P_\alpha} \zetaa{\ell}^{c_{r-1}(x)} (-1)^{\langle u, x \rangle},
\quad\text{where}\quad
P_\alpha = \{x \in \V : c_j(x) = \alpha_j \text{ for } 0 \leq j \leq r-2\}.
$
Then $f$ is landscape with levels in $\Lambda = \{2^{m_1/2}v_1,\ldots,2^{m_t/2}v_t\}$ if and only if, for every $u \in \V$, either $C_\alpha(u)=0$ for all $\alpha \in (\Znl)^{r-1}$, or there is exactly one nonzero $\alpha^*(u)$ and $|C_{\alpha^*(u)}(u)|\in\Lambda$.
\end{corollary}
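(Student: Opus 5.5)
The plan is to prove both directions directly from the decomposition \eqref{eq:WHT_decomp},
\[
\WHT_f(u)=\sum_{\alpha\in(\Znl)^{r-1}} C_\alpha(u)\,\zetaa{k}^{\sum_{j=0}^{r-2}\alpha_j2^{j\ell}},
\]
together with the $\Q(\zetaa{\ell})$-linear independence of the basis $\mathcal{B}$ from Lemma~\ref{lem:basis}. The coefficients $C_\alpha(u)$ are sums of powers of $\zetaa{\ell}$ with signs, so they lie in $\Z[\zetaa{\ell}]\subset\Q(\zetaa{\ell})$. This is what allows linear independence to be used.

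For the forward direction, assume $f$ is landscape with levels in $\Lambda$, and reuse the argument in the proof of Theorem~\ref{thm:landscape_necessity}. Fix $u$. If $\WHT_f(u)=0$, linear independence forces every $C_\alpha(u)=0$. Otherwise, since $k=\ell r\ge 4$, Theorem~\ref{reg-landscape} gives $\WHT_f(u)=2^{m_*/2}v_*\zetaa{k}^{\rho_u}$. We then split into two cases.
\begin{itemize}
\item If $m_*$ is even, take the $2^\ell$-adic digits of $\rho_u$. This writes $\WHT_f(u)$ as a $\Q(\zetaa{\ell})$-multiple of a single basis element.
\item If $m_*$ is odd, use the identity $\sqrt2=\zetaa{k}^{2^{k-3}}-\zetaa{k}^{2^{k-2}+2^{k-3}}$. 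The two exponents differ by $2^{k-2}\equiv 0 \pmod{2^{k-\ell}}$ because $\ell\ge2$, so again only one basis element appears.
\end{itemize}
Comparing coefficients, exactly one $C_{\alpha^*(u)}(u)$ is nonzero. Since $|\zetaa{k}^{\cdot}|=1$, we get $|C_{\alpha^*(u)}(u)|=|\WHT_f(u)|\in\Lambda$.

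For the converse, assume that for each $u$ either all $C_\alpha(u)$ vanish or exactly one is nonzero with modulus in $\Lambda$. In the first case \eqref{eq:WHT_decomp} gives $\WHT_f(u)=0$. In the second, $\WHT_f(u)=C_{\alpha^*(u)}(u)\,\zetaa{k}^{\sum_j\alpha^*_j(u)2^{j\ell}}$, so $|\WHT_f(u)|=|C_{\alpha^*(u)}(u)|\in\Lambda$. Hence the set of nonzero Walsh magnitudes of $f$ is contained in $\Lambda$, which is a set of values of the form $2^{m_i/2}v_i$, and so $f$ is landscape with levels in $\Lambda$.

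The main obstacle is interpretive rather than technical. We must read ``landscape with levels in $\Lambda$'' as ``nonzero magnitudes lie in $\Lambda$'', not as ``every level in $\Lambda$ is attained''. If the intended meaning is that all levels are attained, then both directions transfer that equality verbatim, because the maps $u\mapsto|\WHT_f(u)|$ and $u\mapsto|C_{\alpha^*(u)}(u)|$ coincide.

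The one technical point to check is the odd case, where Theorem~\ref{reg-landscape} must apply with $k>2$. This holds because $\ell,r\ge2$ give $k\ge 4$, which also makes the representation of $\sqrt2$ above valid. The converse direction needs no regularity result at all.
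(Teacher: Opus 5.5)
Your argument is the paper's argument, step for step: the forward direction is the sparsity argument from the proof of Theorem~\ref{thm:landscape_necessity}, and the converse collapses \eqref{eq:WHT_decomp}. The converse is correct. The forward direction, however, fails at the step you and the paper share. That step invokes Theorem~\ref{reg-landscape} to write $\WHT_f(u)=2^{m_*/2}v_*\zetaa{k}^{\rho_u}$, which is only justified when $v_*=1$.

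When $v_*=1$ the argument works. The ideal $(\WHT_f(u))$ is then a power of the unique prime $(1-\zetaa{k})$ above $2$, and that prime is fixed by complex conjugation. So $|\WHT_f(u)|^2=2^{m_*}$ forces $\WHT_f(u)$ to be $2^{m_*/2}$ (using $\sqrt2\in\Z[\zeta_8]$ if $m_*$ is odd) times a unit of modulus $1$ in every embedding, i.e.\ times a root of unity.

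For $v_*>1$ there is no such rigidity, because odd primes can split into pairs swapped by conjugation. For example, $3=(1+\sqrt{-2})(1-\sqrt{-2})$ in $\Z[\zeta_8]$, and $4i(1+2\sqrt{-2})=-4i(1-\sqrt{-2})^2$ has modulus $12$ without being $12$ times a root of unity. This is not just a hole in the proof: the forward implication is false.

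Here is a counterexample. Take $n=5$, $k=4$, $\ell=r=2$, and write points as $(x,z)\in\F^4\times\F$. Define $f(x,0)=4x_1+4x_2+8x_3x_4$ and $f(x,1)=6+4x_1$ in $\Z_{16}$. Then:
\begin{itemize}
\item $c_0=2z$, $c_1(x,0)=x_1+x_2+2x_3x_4$, and $c_1(x,1)=1+x_1$.
\item The only nonempty cells are $P_0=\{z=0\}$ and $P_2=\{z=1\}$.
\item $C_0(u,w)=2(-1)^{u_3u_4}(1+i(-1)^{u_1})(1+i(-1)^{u_2})$, which always has modulus $4$.
\item $C_2(u,w)=8(-1)^w\, i\,(1+i(-1)^{u_1})$ if $u_2=u_3=u_4=0$, and $C_2(u,w)=0$ otherwise.
\end{itemize}
Since $\WHT_f=C_0+\zeta_8C_2$, you get $\WHT_f(0,w)=4i\mp 8\sqrt2$ and $\WHT_f(e_1,w)=4\pm 8\sqrt2\, i$, with $|\WHT_f|=4$ at every other point. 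So $f$ is landscape with $\Lambda=\{2^{4/2}\cdot 3,\,2^{4/2}\cdot 1\}$. Yet $C_0(0,0)=4i$ and $C_2(0,0)=8(i-1)$ are both nonzero. Adding a dummy variable gives an even-$n$ example.

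Theorem~\ref{thm:landscape_necessity} fails on the same example: $\WHT_{c_1}(0,0)=C_0+C_2=-8+12i$, and $|{-8}+12i|^2=208$ is not even of landscape form.

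Your forward argument, and the paper's, is valid exactly when all $v_i=1$, i.e.\ for gplateaued and gbent functions, where the regular form does hold. So your closing remark that the main obstacle is interpretive misses the real issue, which is this regularity claim.
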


\begin{proof}
$\Rightarrow$ (necessity): This follows directly from the proof of Theorem~\ref{thm:landscape_necessity}.

$\Leftarrow$ (sufficiency): Let $u\in\V$. We can distinguish between two cases.
First, if $C_\alpha(u)=0$ for all $\alpha \in (\Znl)^{r-1}$, then $\WHT_f(u) = \sum_{\alpha \in (\Znl)^{r-1}} C_\alpha(u) \cdot 
\zetaa{k}^{\sum_{j=0}^{r-2} \alpha_j 2^{j\ell}}=0.$
If there is exactly one nonzero $\alpha^*(u)$ and $|C_{\alpha^*(u)}(u)|\in\Lambda$, then
$\left|\WHT_f(u)\right| = \left|\sum_{\alpha \in (\Znl)^{r-1}} C_\alpha(u) \cdot 
\zetaa{k}^{\sum_{j=0}^{r-2} \alpha_j 2^{j\ell}}\right|
= \left|C_{\alpha^*(u)}(u) \cdot 
\zetaa{k}^{\sum_{j=0}^{r-2} (\alpha^*(u))_j 2^{j\ell}}\right|
=|C_{\alpha^*(u)}(u)|\in\Lambda$.
The claim is shown.
\end{proof}

\begin{remark}
\label{rem:singleton_cells}
A consequence of Corollary~\textup{\ref{cor:sparsity}} is that if there exist distinct $\alpha,\alpha' \in (\Znl)^{r-1}$ with $|P_\alpha| = |P_{\alpha'}| = 1$, then $f$  cannot be landscape: for any $u$ with $\langle u, x_\alpha - x_{\alpha'} \rangle = 0$ 
(where $P_\alpha = \{x_\alpha\}$, $P_{\alpha'} = \{x_{\alpha'}\}$), both cells  contribute nonzero terms $C_\alpha(u)$ and $C_{\alpha'}(u)$, violating sparsity.  In particular, if all lower components $c_0,\ldots,c_{r-2}$ are injective, then  every partition cell has size at most one and landscape is impossible. This  illustrates why the structural assumptions in Theorems~\textup{\ref{thm:plateaued_common_arg}} 
are genuinely necessary, as confirmed by Example~\textup{\ref{ex:counterexample_3to1}}.
\end{remark}

As a consequence, we can characterize gbent functions:

\begin{corollary}
\label{cor:iff}
Let $k = \ell r$ with $\ell \geq 2$ and $r \geq 2$, and let
$f: \V \to \Znk$ have $2^\ell$-adic decomposition
$f = \sum_{j=0}^{r-1} 2^{j\ell} c_j$.
For each $u \in \V$ and $\alpha \in (\Znl)^{r-1}$, define
$C_\alpha(u) = \sum_{x \in P_\alpha}
\zetaa{\ell}^{c_{r-1}(x)} (-1)^{\langle u,x \rangle}$.
Then $f$ is gbent over $\Znk$ if and only if for each $u \in \V$,
exactly one coefficient $C_\alpha(u)$ is nonzero.
\end{corollary}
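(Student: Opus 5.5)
The plan is to obtain the corollary as the special case $\Lambda=\{2^{n/2}\}$ of Corollary~\ref{cor:sparsity}, treating each direction separately through the expansion~\eqref{eq:WHT_decomp}.

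\emph{Necessity.} Assume $f$ is gbent over $\Znk$. Then $f$ is landscape with the single level $(m_1,v_1)=(n,1)$. Moreover $\WHT_f(u)\neq 0$ for every $u\in\V$. The proof of Theorem~\ref{thm:landscape_necessity}, in Case~A since $n$ is even, then gives
\[
\WHT_f(u)=2^{n/2}\zetaa{\ell}^{\beta_{r-1}}\cdot\zetaa{k}^{\sum_{j=0}^{r-2}\beta_j2^{j\ell}}.
\]
By the $\Q(\zetaa{\ell})$-linear independence of $\mathcal B$ (Lemma~\ref{lem:basis}), exactly one coefficient $C_{\alpha^*(u)}(u)$ is nonzero, namely the one with modulus $2^{n/2}$. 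The alternative ``all $C_\alpha(u)=0$'' is excluded because it would force $\WHT_f(u)=0$.

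\emph{Sufficiency.} Assume that for each $u$ exactly one $C_{\alpha^*(u)}(u)$ is nonzero. Then~\eqref{eq:WHT_decomp} collapses to $|\WHT_f(u)|=|C_{\alpha^*(u)}(u)|>0$ for every $u$. To conclude that $f$ is gbent, it remains to show $|C_{\alpha^*(u)}(u)|=2^{n/2}$.

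I would try to get this from the identity $C_\alpha=\WHT_{g_\alpha}$, where $g_\alpha=\zetaa{\ell}^{c_{r-1}}\mathbf 1_{P_\alpha}$. The sets $U_\alpha=\{u: C_\alpha(u)\neq0\}$ then partition $\V$. Parseval gives $\sum_{u\in U_\alpha}|C_\alpha(u)|^2=2^n|P_\alpha|$, and Theorem~\ref{thm:uncertainty} gives $|P_\alpha|\,|U_\alpha|\ge 2^n$. The hope is that summing over $\alpha$, together with $\sum_\alpha|P_\alpha|=\sum_\alpha|U_\alpha|=2^n$, forces equality in the uncertainty principle for every cell. Equality would make each $g_\alpha$ a modulated subgroup-coset indicator, whose transform is flat of modulus $|P_\alpha|$ on $U_\alpha$. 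I would then still need to check that $|P_\alpha|=2^{n/2}$.

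\emph{Main obstacle.} This last step is where I expect the argument to fail. The counting only yields $\sum_\alpha 1/|P_\alpha|\le 1$, which does not force equality. Indeed, when $c_0,\dots,c_{r-2}$ are constant there is a single cell, and the hypothesis merely says that $\WHT_{c_{r-1}}$ never vanishes. This is not the gbent condition.

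The step therefore has to be closed using the level information. Here I would invoke Corollary~\ref{cor:sparsity} with $\Lambda=\{2^{n/2}\}$, which supplies precisely the missing requirement $|C_{\alpha^*(u)}(u)|=2^{n/2}$. So the plan proves sufficiency under the reading that ``exactly one nonzero coefficient'' carries the gbent magnitude $2^{n/2}$ inherited from the landscape levels. I would also remark that, without that magnitude requirement, the sufficiency direction is false: the constant-component case above is a counterexample.
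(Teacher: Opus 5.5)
Your argument is correct and follows the paper's route. Necessity is exactly Case~A of the proof of Theorem~\ref{thm:landscape_necessity} combined with Lemma~\ref{lem:basis}. For sufficiency, the paper's one-line proof invokes Corollary~\ref{cor:sparsity} with $\Lambda=\{2^{n/2}\}$, and so tacitly uses the very condition $|C_{\alpha^*(u)}(u)|=2^{n/2}$ that the statement omits. You are therefore right that the ``if'' direction as literally stated is false, and that what the paper actually proves is your amended version.

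A concrete instance of your counterexample is $n=\ell=r=2$, $c_0\equiv 0$, $c_1=\mathbf{1}_{\{(1,1)\}}$. There the only nonzero coefficient $C_0(u)=\WHT_{c_1}(u)=\WHT_f(u)$ takes the values $3+i,\,1-i,\,1-i,\,-1+i$ at $u=(0,0),(0,1),(1,0),(1,1)$, so $f=4c_1$ is not gbent. Since $1$-sparsity gives $C_{\alpha^*(u)}(u)=\sum_\alpha C_\alpha(u)=\WHT_{c_{r-1}}(u)$, the correct statement can equivalently be phrased as ``$f$ is gbent iff $c_{r-1}$ is gbent over $\Znl$ and exactly one $C_\alpha(u)$ is nonzero for each $u$.''
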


\begin{proof}
Follows directly from Corollary \ref{cor:sparsity} taking $t=1,m_1=n,v_1=1$, as by Parseval the case $C_\alpha(u)=0$ for all $\alpha \in (\Znl)^{r-1}$ is excluded, since then $|\WHT_f(u)|=0$.
\end{proof}

While Theorem~\ref{thm:landscape_iff} already provides a 
sufficiency condition via a small subset of the full family of maps $F:(\Znl)^{r-1}\to\Z_{2^m}$, it is also useful to have sufficiency results for the smaller affine
subfamily $\{f_\beta\}_{\beta\in(\Znl)^{r-1}}$ alone, which involves only $2^{k-\ell}$ functions rather than $2^{k-\ell+1}+1$. As Example~\ref{ex:counterexample_3to1} shows, the affine subfamily alone does not suffice in general, and additional structural assumptions are genuinely necessary. The following  theorem identify precise conditions under which the affine subfamily characterizes  gplateaued functions. 
 
\begin{theorem}
\label{thm:plateaued_common_arg}
Let $n$ be an even integer and $k=\ell r$ with $\ell \ge 2$.
Let $f:\V\to\Znk$ have $2^\ell$-adic components $c_0,\dots,c_{r-1}$.
Suppose that for every $\beta\in(\Znl)^{r-1}$, the function
$g_\beta(x)=c_{r-1}(x)+\sum_{j=0}^{r-2}\beta_j\,c_j(x)$
is $s$-gplateaued over $\Znl$. Additionally, suppose that for every $u\in\V$,
letting
\[
\mathcal S(u)=\{\alpha\in(\Znl)^{r-1}: C_\alpha(u)\neq 0\},
\qquad
D(u)=\mathcal S(u)-\mathcal S(u),
\]
the following hold:
\begin{enumerate}
\item[\textup{(a)}] the nonzero coefficients $(C_\alpha(u))_{\alpha\in\mathcal S(u)}$
satisfy the common-argument hypothesis (Definition~\textup{\ref{def:common_argument}}); and
\item[\textup{(b)}] $\Stab\big(D(u)+D(u)\big)=\{0\}$ in $(\Znl)^{r-1}$.
\end{enumerate}
Then $f$ is $s$-gplateaued over $\Znk$.
\end{theorem}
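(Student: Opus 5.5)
The plan is to fix $u\in\V$ and read the family $\{\WHT_{g_\beta}(u)\}_{\beta}$ as the Fourier transform of the coefficient vector $\alpha\mapsto C_\alpha(u)$ on the group $G=(\Znl)^{r-1}$. Theorem~\ref{thm:overconstrained} together with hypotheses (a) and (b) should then force $|\mathcal S(u)|\le 1$. After that, the decomposition \eqref{eq:WHT_decomp} gives $|\WHT_f(u)|$ directly.

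\emph{Setting up the character sum.} Define $\mu_u:G\to\C$ by $\mu_u(\alpha)=C_\alpha(u)$, so $\supp(\mu_u)=\mathcal S(u)$. The characters of $G$ are $\chi_\beta(\alpha)=\zetaa{\ell}^{\langle\beta,\alpha\rangle}$ for $\beta\in(\Znl)^{r-1}$. Exactly as in \eqref{eq:fbeta}, grouping over the partition of Definition~\ref{def:partition} gives
\[
\WHT_{g_\beta}(u)=\sum_{\alpha\in\mathcal S(u)} C_\alpha(u)\,\chi_\beta(\alpha)=S_{\chi_\beta}.
\]
Since each $g_\beta$ is $s$-gplateaued, $|S_\chi|\in\{0,A\}$ for every $\chi\in\widehat G$, with $A=2^{(n+s)/2}$. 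If $\mathcal S(u)=\emptyset$, then $\WHT_f(u)=0$ by \eqref{eq:WHT_decomp} and there is nothing to prove. Otherwise $\mu_u\neq 0$, so by Fourier inversion some $S_\chi$ is nonzero. Hypothesis (a) is exactly the common-argument hypothesis, so Theorem~\ref{thm:overconstrained} applies.

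\emph{Sparsity.} By (b), $H=\Stab(D(u)+D(u))=\{0\}$. Hence $\pi$ is the identity and $\overline{\mathcal S}=\mathcal S(u)$, so Theorem~\ref{thm:overconstrained}(i) gives $|\mathcal S(u)|\le 2$. Suppose $\mathcal S(u)=\{\alpha,\alpha'\}$ and set $\gamma=\alpha'-\alpha\ne 0$.
\begin{itemize}
\item If $\ord(\gamma)\ge 3$, then $\gamma\ne-\gamma$ and $|D(u)|=3$, so Theorem~\ref{thm:overconstrained}(iii) yields a contradiction. Alternatively, one applies Theorem~\ref{thm:two_point_analysis} directly: the exceptional cube-root case is impossible because $C_\alpha(u)/C_{\alpha'}(u)>0$.
\item If $\ord(\gamma)=2$, then $D(u)=\{0,\gamma\}=D(u)+D(u)$, which is stabilized by the nontrivial subgroup $\{0,\gamma\}$, contradicting (b).
\end{itemize}
Hence $|\mathcal S(u)|\le 1$ for every $u$.

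\emph{Conclusion.} If $\mathcal S(u)=\{\alpha^*\}$, then by \eqref{eq:WHT_decomp}, $|\WHT_f(u)|=|C_{\alpha^*}(u)|$. Taking $\beta=0$, this equals $|\WHT_{g_0}(u)|\in\{0,2^{(n+s)/2}\}$. Therefore $f$ is $s$-gplateaued.

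The main obstacle is the two-point case with $\gamma$ of order $2$. There the common-argument hypothesis alone does not exclude a second nonzero coefficient: equal moduli give spectrum $\{0,2|C_\alpha|\}$, which is genuinely two-level. This is precisely where hypothesis (b) must be used, and I expect that to be the delicate point to state cleanly.
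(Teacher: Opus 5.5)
Your proposal is correct and follows the paper's proof essentially step for step. You view $\WHT_{g_\beta}(u)$ as the Fourier transform of $\alpha\mapsto C_\alpha(u)$ on $(\Znl)^{r-1}$, apply Theorem~\ref{thm:overconstrained} with (b) forcing $H=\{0\}$ to get $|\mathcal S(u)|\le 2$, exclude two points of order $\ge 3$ via the common-argument hypothesis and of order $2$ via (b), and read off $|\WHT_f(u)|=|\WHT_{g_0}(u)|$. The differences are cosmetic: you handle $\mathcal S(u)=\emptyset$ before invoking the theorem, and you cite part (iii) of Theorem~\ref{thm:overconstrained} for the order-$\ge 3$ case, where the paper applies Theorem~\ref{thm:two_point_analysis} directly (your stated alternative).
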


\begin{proof}
Fix $u\in\V$ and set
\[
\mathcal S(u)=\{\alpha\in(\Znl)^{r-1}: C_\alpha(u)\neq 0\}.
\]
By hypothesis, each $g_\beta$ is $s$-gplateaued over $\Znl$, i.e.
\[
|\WHT_{g_\beta}(u)|\in\{0,2^{(n+s)/2}\}\qquad\text{for all }\beta\in(\Znl)^{r-1}.
\]
By \eqref{eq:fbeta} we have
\[
\WHT_{g_\beta}(u)
=\sum_{\alpha\in(\Znl)^{r-1}}\zetaa{\ell}^{\sum_{j=0}^{r-2}\beta_j\alpha_j}\,C_\alpha(u),
\]
so $\WHT_{g_\beta}(u)$ is the Fourier transform of the finitely supported measure
\[
\mu_u=\sum_{\alpha\in\mathcal S(u)} C_\alpha(u)\,\delta_\alpha
\]
on the finite abelian group $G=(\Znl)^{r-1}$, evaluated at the character
$\chi_\beta(\alpha)=\zetaa{\ell}^{\langle\beta,\alpha\rangle}$.
For any $\alpha\neq0,$
as $\beta$ ranges over $(\Znl)^{r-1}$, the characters $\chi_\beta$ exhaust
$\widehat G$.

By (a), the nonzero coefficients of $\mu_u$ satisfy the common-argument hypothesis, and by
$s$-gplateauedness of all $g_\beta$ we have
$|\widehat\mu_u(\chi)|\in\{0,A\}$ for all $\chi\in\widehat G$, where $A=2^{(n+s)/2}$.
Therefore Theorem~\ref{thm:overconstrained} applies to $\mu_u$ on $G$ with this value of $A$,
yielding
\[
|\overline{\mathcal S}(u)|\le 2 \quad\text{in } G/H,
\qquad
H=\Stab\big(D(u)+D(u)\big),
\qquad
D(u)=\mathcal S(u)-\mathcal S(u).
\]
By (b) we have $H=\{0\}$, hence $G/H=G$ and consequently
$|\mathcal S(u)|\le 2$.

We claim in fact that $|\mathcal S(u)|\le 1$.
Indeed, suppose for contradiction that $|\mathcal S(u)|=2$, say
$\mathcal S(u)=\{\alpha,\alpha'\}$ with $\alpha\neq\alpha'$.
Write $z_1=C_\alpha(u)$, $z_2=C_{\alpha'}(u)$ and $\gamma=\alpha'-\alpha\in G$.
For $\chi\in\widehat G$ set
\[
S_\chi=z_1\chi(\alpha)+z_2\chi(\alpha').
\]
Then $S_{\chi_\beta}=\WHT_{g_\beta}(u)$ for all $\beta$, so by hypothesis
$|S_\chi|\in\{0,A\}$ for all $\chi\in\widehat G$.
Applying Theorem~\ref{thm:two_point_analysis} to $(\alpha,\alpha')$ in $G$,
the cases $\ord(\gamma)\ge 4$ are impossible (they would yield at least two distinct
nonzero magnitudes), and the exceptional $\ord(\gamma)=3$ case is also impossible under
the common-argument hypothesis (a), since then $z_1/z_2\in\R$ and hence
$-z_1/z_2\in\R$, which cannot be a primitive cube root of unity.
Therefore $\ord(\gamma)=2$.
But if $\ord(\gamma)=2$, then $D(u)=\{0,\pm\gamma\}=\{0,\gamma\}$ and hence
$D(u)+D(u)=\{0,\gamma\}$,
which is stabilized by translation by $\gamma$; 
$\{0,\gamma\}+\gamma=\{\gamma,0\}=\{0,\gamma\}$.
Thus $\gamma\in\Stab(D(u)+D(u))$ is a nonzero stabilizer, contradicting (b).
Hence $|\mathcal S(u)|\neq 2$, so indeed $|\mathcal S(u)|\le 1$.
If $|\mathcal S(u)|=0$, then all $C_\alpha(u)=0$ and \eqref{eq:WHT_decomp} gives
$\WHT_f(u)=0$.
If $|\mathcal S(u)|=1$, say $\mathcal S(u)=\{\alpha^*\}$, then
\[
\WHT_{g_0}(u)=\sum_{\alpha\in(\Znl)^{r-1}} C_\alpha(u)=C_{\alpha^*}(u),
\]
so $|C_{\alpha^*}(u)|\in\{0,A\}$.
Therefore, by Corollary \ref{cor:sparsity} ($t=1$), $f$ is $s$-gplateaued over $\Znk$.
\end{proof}
 
Specializing the sufficiency conditions to the gbent case ($s=0$) allows us to drop the stabilizer requirement, yielding the following characterization.

\begin{corollary}
\label{cor:main}
Let $n$ be an even integer and $k=\ell r$ with $\ell \geq 2$. Let $f: \V \to \Znk$ be a function with $2^\ell$-adic components
$c_0, \dots, c_{r-1}$. Then, if $c_{r-1}$ is gbent over $\Znl$, and for each $u \in \V$ the nonzero coefficients $C_\alpha(u)$ satisfy the common-argument hypothesis, then $f$ is gbent over $\Znk$.
\end{corollary}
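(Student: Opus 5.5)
The plan is to reduce to Corollary~\ref{cor:iff}: it suffices to show that for every $u\in\V$ exactly one coefficient $C_\alpha(u)$ is nonzero. I would not go through Theorem~\ref{thm:overconstrained}. Instead I would use a global Parseval counting argument, in which the common-argument hypothesis turns an identity between two sums of squares into a statement about nonnegative cross terms. This is also why no stabilizer condition like (b) of Theorem~\ref{thm:plateaued_common_arg} should be needed.

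\emph{Step 1 (local identity).} Fix $u$. Write $C_\alpha(u)=c_\alpha(u)e^{i\theta_u}$ with $c_\alpha(u)\ge 0$; by the common-argument hypothesis the phase $\theta_u$ is shared by all nonzero coefficients. Since $P_\alpha$ partitions $\V$, we have $\WHT_{c_{r-1}}(u)=\sum_\alpha C_\alpha(u)$, and therefore
\[
|\WHT_{c_{r-1}}(u)|^2=\Bigl(\sum_\alpha c_\alpha(u)\Bigr)^2=\sum_\alpha c_\alpha(u)^2+\sum_{\alpha\neq\alpha'}c_\alpha(u)c_{\alpha'}(u).
\]
Because $c_{r-1}$ is gbent over $\Znl$, the left-hand side equals $2^n$ for every $u$.

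\emph{Step 2 (cellwise Parseval).} For each $\alpha$, the function $u\mapsto C_\alpha(u)$ is the Walsh transform of $x\mapsto \mathbf 1_{P_\alpha}(x)\zetaa{\ell}^{c_{r-1}(x)}$. Parseval therefore gives $\sum_{u\in\V}|C_\alpha(u)|^2=2^n|P_\alpha|$. Summing over $\alpha$ and using $\sum_\alpha|P_\alpha|=2^n$ (Lemma~\ref{lem:partition}) yields
\[
\sum_{u}\sum_\alpha c_\alpha(u)^2=2^{2n}.
\]
Summing the identity of Step 1 over all $u$ gives the same total $2^n\cdot 2^n=2^{2n}$. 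Subtracting the two totals leaves $\sum_u\sum_{\alpha\ne\alpha'}c_\alpha(u)c_{\alpha'}(u)=0$.

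\emph{Step 3 (conclusion).} Every term in that double sum is nonnegative, so each term vanishes. Hence for each $u$ at most one $c_\alpha(u)$ is nonzero. Since $\sum_\alpha C_\alpha(u)=\WHT_{c_{r-1}}(u)\neq0$, at least one is nonzero. So exactly one coefficient $C_{\alpha^*(u)}(u)$ is nonzero, and it equals $\WHT_{c_{r-1}}(u)$, which has modulus $2^{n/2}$. Corollary~\ref{cor:iff}, or directly \eqref{eq:WHT_decomp}, then gives $|\WHT_f(u)|=2^{n/2}$ for all $u$, so $f$ is gbent over $\Znk$.

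The only delicate point is Step 2. One must check that Parseval applies cell by cell with the restricted function $\mathbf 1_{P_\alpha}\zetaa{\ell}^{c_{r-1}}$, which is routine. The rest is bookkeeping.
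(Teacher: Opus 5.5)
Your proof is correct, and it takes a different route from the paper.

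The paper never establishes sparsity. It notes that the common-argument hypothesis gives $\sum_\alpha|C_\alpha(u)|=|\WHT_{c_{r-1}}(u)|=2^{n/2}$. The triangle inequality in \eqref{eq:WHT_decomp} then yields $|\WHT_f(u)|\le 2^{n/2}$ for every $u$, and Parseval for $f$ itself over $\Znk$, $\sum_u|\WHT_f(u)|^2=2^{2n}$, forces equality everywhere.

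You instead apply Parseval cell by cell to $\mathbf 1_{P_\alpha}\zetaa{\ell}^{c_{r-1}}$. You compare the result with $\sum_u\bigl(\sum_\alpha|C_\alpha(u)|\bigr)^2=2^{2n}$, which is the same benchmark the paper uses. The nonnegative cross terms must then vanish, which gives $1$-sparsity directly.

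The two arguments are close relatives. Summing the cellwise identities is the same as Parseval for $f$, because $\sum_u C_\alpha(u)\overline{C_{\alpha'}(u)}=0$ for $\alpha\neq\alpha'$, the cells being disjoint.

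The paper's version is shorter. Yours delivers more structure. It shows that exactly one $C_\alpha(u)$ is nonzero and that it equals $\WHT_{c_{r-1}}(u)$, so that $\WHT_f(u)=\WHT_{c_{r-1}}(u)\,\zetaa{k}^{\sum_j\alpha^*_j(u)2^{j\ell}}$. This ties the result directly to Corollary~\ref{cor:iff} and Proposition~\ref{prop:dual}, and it shows at once that every $f_F$ is gbent as well.

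Your argument also never uses the exponents $\sum_j\alpha_j2^{j\ell}$. Extracting sparsity from the paper's argument would instead require the equality case of the triangle inequality together with injectivity of $\alpha\mapsto\sum_j\alpha_j2^{j\ell}$ from Lemma~\ref{lem:basis}.
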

\begin{proof}
Note that $c_{r-1}$ being gbent is necessary by
Theorem~\ref{thm:landscape_necessity} with $t=1$. For sufficiency, fix $u \in \V$. 
Since $c_{r-1}$ is gbent, $|\sum_{\alpha} C_\alpha(u)| = 2^{n/2}$. Under the common-argument hypothesis, $|\sum_{\alpha} C_\alpha(u)| = \sum_{\alpha} |C_\alpha(u)|$, so $\sum_{\alpha} |C_\alpha(u)| = 2^{n/2}$.  The Walsh transform of $f$ is $\WHT_f(u) = \sum_{\alpha} C_\alpha(u) \zeta_{2^k}^{E_\alpha}$, where $E_\alpha = \sum_{j=0}^{r-2} \alpha_j 2^{j\ell}$. By the triangle inequality, $|\WHT_f(u)| \leq \sum_{\alpha} |C_\alpha(u)| = 2^{n/2}$ for all $u \in \V$. By Parseval's identity, $\sum_{u \in \V} |\WHT_f(u)|^2 = 2^{2n}$.  Since there are $2^n$ terms in the sum and each satisfies $|\WHT_f(u)|^2 \leq 2^n$,  equality must hold for all $u$. Therefore, $|\WHT_f(u)| = 2^{n/2}$ for all $u$, 
so $f$ is gbent over $\Znk$.
\end{proof}
 We next show an example that illustrates why additional conditions are necessary, even in the case of gbents, and that even the gbentness of the $g_\beta$ in itself is not enough.
\begin{example}
\label{ex:counterexample_3to1}
Take $n=2$, $\ell=2$, $r=2$, so $k=4$ and $\V=\F^2$.
Let $c_0,c_1:\V\to\Z_4$ be defined by
\[
\begin{array}{c|cccc}
x & (0,0) & (0,1) & (1,0) & (1,1)\\\hline
c_0(x) & 0 & 0 & 1 & 3\\
c_1(x) & 0 & 1 & 0 & 3
\end{array}
\]
and set $f(x)=c_0(x)+4c_1(x)\in\Z_{16}$.
For $\beta\in\Z_4$, define $g_\beta(x)=c_1(x)+\beta\,c_0(x)\in\Z_4$.

\noindent\textbf{Claim~1.} For every $\beta\in\Z_4$, the function $g_\beta$
is gbent over $\Z_4$, with $|\WHT_{g_\beta}(u)|=2$ for all $u\in\V$.
This is verified by direct computation.

\noindent\textbf{Claim~2.} The function $f:\V\to\Z_{16}$ is not gbent.
Direct computation gives
\[
\bigl(|\WHT_f(0,0)|,\,|\WHT_f(0,1)|,\,|\WHT_f(1,0)|,\,|\WHT_f(1,1)|\bigr)
=\bigl(3.018\ldots,\,1.027\ldots,\,1.311\ldots,\,2.029\ldots\bigr),
\]
which is not constant, so $f$ is not gbent.

Therefore all $g_\beta$ are gbent (and therefore have  identical magnitudes), but $f$ is not gbent. The common-argument hypothesis fails here, and the partition cells $P_0=\{(0,0),(0,1)\}$ and $P_1=\{(1,0)\}$ and $P_3=\{(1,1)\}$ each have sizes $2,1,1$ respectively, so by Remark~\textup{\ref{rem:singleton_cells}} the existence of two singleton cells already prevents $f$ from being landscape. This confirms that additional conditions as 
the common-argument hypothesis in Theorem~\textup{\ref{thm:plateaued_common_arg}} 
are genuinely necessary.
\end{example}

\subsection{Verification strategies and complexity}
\label{sec:verification}

The $2^\ell$-adic characterization enables multiple verification strategies with
varying computational costs and structural requirements.
Theorem~\ref{thm:landscape_iff} provides a complete characterization of landscape
functions via a family of maps $F:(\Znl)^{r-1}\to\Z_{2^m}$, for any $1\leq m\leq\ell$.
The size of this family is less than $2^{\ell(r-1)+1}+1$, regardless of $m$, compared to the $2^{k-1}$ Boolean functions
required by Theorem~\ref{thm:binary_char}.
Choosing instead the \emph{affine subfamily}
$\{f_\beta\}_{\beta\in(\Znl)^{r-1}}$ (i.e., $F$ linear, corresponding to $m=\ell$
and $F(\alpha)=\langle\beta,\alpha\rangle$) reduces the count to
$2^{\ell(r-1)}=2^{k-\ell}$ checks, under the common-argument assumption.

The necessity direction --- that gbentness of $f$ implies gbentness of all $f_\beta$ and $f_F$
--- is unconditional and follows from Theorem~\ref{thm:landscape_necessity} and
Corollary~\ref{cor:main}(i). For the converse, Example~\ref{ex:counterexample_3to1}
demonstrates that gbentness of all $f_\beta$ does not imply gbentness of $f$ in general, so additional structural hypotheses are genuinely needed.

\begin{remark}
\label{rem:affine_not_sufficient}
The common-argument hypothesis in
\textup{\ref{thm:plateaued_common_arg}} cannot be omitted, even when
lower components are affine. When $c_0, \ldots, c_{r-2}$ are
$\mathbb{Z}_{2^\ell}$-affine, the partition cells $P_\alpha = x_\alpha + H$ are
affine cosets where $H = \ker(c_0, \ldots, c_{r-2})$. This gives
\[
C_\alpha(u) = (-1)^{\langle u, x_\alpha \rangle}
\sum_{h \in H} \zetaa{\ell}^{c_{r-1}(x_\alpha + h)} (-1)^{\langle u, h \rangle}.
\]
While the factor $(-1)^{\langle u, x_\alpha \rangle} \in \{\pm 1\}$ is real,
the sum over $H$ can have arbitrary complex argument depending on $c_{r-1}$.
Thus affine lower components alone do not guarantee common-argument, and
Example~\textup{\ref{ex:counterexample_3to1}} shows that without additional constraints gbentness
of all $f_\beta$ need not imply gbentness of $f$.
\end{remark}

\begin{proposition}
\label{thm:iff_basis}
If, for each $u \in \V$ the nonzero coefficients $C_\alpha(u)$ satisfy the common-argument hypothesis, and if  the lower components $c_0,\dots,c_{r-2}$ are $\Znl$-affine, 
$f$ is gbent over $\Znk$ if and only if all $r-1$ basis functions $f_j=c_{r-1}+c_j$, $j=0,\ldots,r-2$, are gbent over $\Znl$.
\end{proposition}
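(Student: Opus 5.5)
The necessity direction is immediate from Theorem~\ref{thm:landscape_necessity}. If $f$ is gbent over $\Znk$, it is landscape with the single level $(n,1)$. Part (ii) of that theorem then gives $|\WHT_{f_\beta}(u)|=|\WHT_f(u)|=2^{n/2}$ for every $\beta\in(\Znl)^{r-1}$. Taking $\beta=e_j$, the $j$-th unit vector, yields $f_{e_j}=c_{r-1}+c_j=f_j$, so each basis function $f_j$ is gbent. Neither the affine nor the common-argument hypothesis is used here.

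For sufficiency, I would reduce to Corollary~\ref{cor:main}, which needs only two things: that $c_{r-1}$ is gbent, and that the nonzero $C_\alpha(u)$ satisfy the common-argument hypothesis. The latter is assumed, so the whole task is to show that $c_{r-1}$ is gbent from the gbentness of $f_0,\dots,f_{r-2}$. The affine assumption is what makes this possible. Write $c_j(x)=L_j(x)+b_j$, where $L_j:\V\to\Znl$ is additive. Since $\V$ is an elementary abelian $2$-group, $2L_j(x)=L_j(2x)=0$, so $L_j$ takes values in $\{0,2^{\ell-1}\}$. Hence $L_j(x)=2^{\ell-1}\langle w_j,x\rangle$ for some $w_j\in\V$, and
\[
\zetaa{\ell}^{c_j(x)}=\zetaa{\ell}^{b_j}(-1)^{\langle w_j,x\rangle}.
\]
It follows that $\WHT_{f_j}(u)=\zetaa{\ell}^{b_j}\WHT_{c_{r-1}}(u+w_j)$. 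So gbentness of $f_j$ for even a single $j$ gives $|\WHT_{c_{r-1}}(v)|=2^{n/2}$ for all $v$, and therefore $c_{r-1}$ is gbent. (This also explains the remark that one basis function suffices.) Corollary~\ref{cor:main} then shows that $f$ is gbent.

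As a consistency check, every $f_\beta$ is then a shift of $c_{r-1}$ times a unimodular constant. This agrees with the affine structure: the cells $P_\alpha$ are cosets of $\bigcap_j\ker L_j$, so the vector $(c_0,\dots,c_{r-2})$ takes values only in an affine image of $(\Z_2)^{r-1}$ scaled by $2^{\ell-1}$.

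The main obstacle I expect is the step asserting that an additive map $\V\to\Znl$ factors through $2^{\ell-1}\F$. This holds only if ``$\Znl$-affine'' means a group homomorphism plus a constant. If the paper instead means affine in the integer ANF sense, $c_j(x)=b_j+\sum_i a_{ji}x_i$ with $a_{ji}\in\Znl$ and $x_i\in\{0,1\}$, the reduction fails, because $\zetaa{\ell}^{c_j}$ is then a product of factors $\zetaa{\ell}^{a_{ji}x_i}$ that are not characters. In that case I would fall back on the common-argument hypothesis alone. By Corollary~\ref{cor:main} it suffices to show $|\sum_\alpha C_\alpha(u)|=2^{n/2}$. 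I would try to obtain this from $\WHT_{f_j}(u)=\sum_\alpha \zetaa{\ell}^{\alpha_j}C_\alpha(u)$, combined with the triangle-inequality and Parseval argument of Corollary~\ref{cor:main}, applied to $f_j$ in place of $f$. This fallback step is where I expect the real difficulty to lie.
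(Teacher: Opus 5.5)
Your proof is correct and follows the paper's route: necessity comes from Theorem~\ref{thm:landscape_necessity} with $\beta=e_j$, and sufficiency comes from showing that $c_{r-1}=f_j-c_j$ is gbent and then invoking Corollary~\ref{cor:main}. Your explicit computation $\zetaa{\ell}^{c_j(x)}=\zetaa{\ell}^{b_j}(-1)^{\langle w_j,x\rangle}$, giving $\WHT_{f_j}(u)=\zetaa{\ell}^{b_j}\WHT_{c_{r-1}}(u+w_j)$, is exactly the justification the paper's step ``$f_j$ gbent and $-c_j$ affine implies $c_{r-1}$ gbent'' needs; the paper's aside that an affine $c_j$ is itself gbent is both unnecessary and false, and your worry about the ANF reading is moot, since the cosets of $\ker(c_0,\ldots,c_{r-2})$ in Remark~\ref{rem:affine_not_sufficient} show that the paper means homomorphism plus constant.
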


\begin{proof}
The necessity follows from Theorem~\ref{thm:landscape_necessity} by taking
$\beta=e_j$, for $j=0,\ldots,r-2$, since then
$f_{e_j}=c_{r-1}+c_j=f_j$.

For the converse, assume that all $r-1$ basis functions
$f_j=c_{r-1}+c_j$, $j=0,\ldots,r-2$,
are gbent over $\Znl$. Since each lower component $c_j$ is
$\Znl$-affine, it is gbent over $\Znl$. Fix any $j\in\{0,\ldots,r-2\}$. Then
$c_{r-1}=f_j-c_j=f_j+(-c_j)$.
Because $f_j$ is gbent and $-c_j$ is affine, it follows that
$c_{r-1}$ is gbent over $\Znl$.
By hypothesis, for each $u\in\V$, the nonzero coefficients
$C_\alpha(u)$ satisfy the common-argument hypothesis. Therefore all assumptions of Corollary~\ref{cor:main} are satisfied, and we conclude that $f$ is gbent over $\Znk$.
\end{proof}
 
Table~\textup{\ref{tab:verification_counts}} summarizes the computational cost of each verification strategy for gbent functions.

{\small
\begin{table}[h]
\centering
{\small
\caption{Verification complexity for gbent functions $f:\V\to\Znk$,
$k = \ell r$}
\label{tab:verification_counts}
\begin{tabular}{|l|l|c|}
\hline
\textbf{Method} & \textbf{Assumptions} & \textbf{Functions checked} \\ \hline
Binary decomposition~\cite{mtqwwf}
  & None (necessity+sufficiency)
  & $2^{2^{k-1}}$ \\ 
\hline
$2^\ell$-adic family, (Thm.~\ref{thm:landscape_iff})
  & None (sufficiency)
  & $< 2^{k-\ell+1}+1$ \\ \hline
Higher component (Cor.~\ref{cor:main})
  & \begin{tabular}{@{}l@{}}
      Common-argument
    \end{tabular}
  & 1 \\ \hline
\end{tabular}
}
\end{table}
}
\medskip
 
For instance, when $(k,\ell)=(12,4)$, verification drops from $2^{2^{11}}$ Boolean transforms to $2^9$ $2^\ell$-adic family checks, or to just $1$ function under common-argument assumptions.   
 
The underlying spectral interpretation is provided by
Corollary~\ref{cor:iff}: gbentness of $f$ is equivalent to $1$-sparsity of the coefficients $C_\alpha(u)$  (that is, the existence of a unique $C_\alpha(u)\neq0)$ for each $u\in\V$.
The verification strategies enforce this sparsity either globally ($2^\ell$-adic family) or economically (basis test under phase alignment). Overall, the $2^\ell$-adic framework yields
exponential reductions in verification complexity when structural conditions are met, with the strongest savings arising when the
common-argument hypothesis are present.

\section{Structural properties of the decomposition}
\label{sec:properties}

Having established the $2^\ell$-adic characterization of gbent, gplateaued, and landscape functions, we now investigate how this decomposition interacts with key structural and cryptographic properties. We show that the decomposition preserves duality and derivative properties, while enabling precise analysis of differential spectra, and it shows a predictable behaviour for the Maiorana-McFarland class structure.

\subsection{Interaction with duality}

The dual function of a gbent function has a natural interpretation in terms of the $2^\ell$-adic decomposition.

\begin{proposition}
\label{prop:dual}
Let $f: \V \to \Znk$ be a gbent function with dual $\widetilde{f}$. Let $\{c_j\}$ and $\{\widetilde{c}_j\}$ be their respective $2^\ell$-adic components. For each $u \in \V$, the lower $r-1$ components of $\widetilde{f}(u)$ are determined by the unique nonzero partition cell $P_{\alpha^*(u)}$ of $f$,
$(\widetilde{c}_0(u), \ldots, \widetilde{c}_{r-2}(u)) = \alpha^*(u)$.
\end{proposition}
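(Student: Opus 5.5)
The plan is to compare two expressions for $\WHT_f(u)$ in the $\Q(\zetaa{\ell})$-basis $\mathcal{B}$ of Lemma~\ref{lem:basis}. One expression comes from the dual, and the other from the partition expansion \eqref{eq:WHT_decomp}. Since $f$ is gbent with dual $\widetilde f$, we have $\WHT_f(u)=2^{n/2}\zetaa{k}^{\widetilde f(u)}$ for every $u\in\V$. Here $n$ is even, which is the standing assumption of Section~\ref{sec:main}, so $2^{n/2}\in\Q$. We write the $2^\ell$-adic expansion of the integer $\widetilde f(u)\in\Znk$ as $\widetilde f(u)=\sum_{j=0}^{r-2}\widetilde c_j(u)2^{j\ell}+\widetilde c_{r-1}(u)2^{(r-1)\ell}$. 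Using $\zetaa{k}^{2^{(r-1)\ell}}=\zetaa{\ell}$, this gives
\[
\WHT_f(u)=\Bigl(2^{n/2}\zetaa{\ell}^{\widetilde c_{r-1}(u)}\Bigr)\cdot\zetaa{k}^{\sum_{j=0}^{r-2}\widetilde c_j(u)2^{j\ell}} .
\]
This writes $\WHT_f(u)$ as a single element of $\mathcal{B}$ times a nonzero coefficient in $\Q(\zetaa{\ell})$. The basis element is the one indexed by $(\widetilde c_0(u),\ldots,\widetilde c_{r-2}(u))$.

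Next, \eqref{eq:WHT_decomp} gives $\WHT_f(u)=\sum_\alpha C_\alpha(u)\zetaa{k}^{\sum_j\alpha_j2^{j\ell}}$. Each $C_\alpha(u)$ is a $\pm$-sum of powers of $\zetaa{\ell}$, so it lies in $\Z[\zetaa{\ell}]\subseteq\Q(\zetaa{\ell})$. By the linear independence in Lemma~\ref{lem:basis}, the coefficient vector of $\WHT_f(u)$ with respect to $\mathcal{B}$ is unique. Comparing the two expressions, $C_\alpha(u)=0$ for every $\alpha\neq(\widetilde c_0(u),\ldots,\widetilde c_{r-2}(u))$. For that particular index, $C_\alpha(u)=2^{n/2}\zetaa{\ell}^{\widetilde c_{r-1}(u)}\neq0$.

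This is exactly Case~A in the proof of Theorem~\ref{thm:landscape_necessity} (equivalently Corollary~\ref{cor:iff}). Hence the unique index $\alpha^*(u)$ with $C_{\alpha^*(u)}(u)\neq 0$, that is, the unique cell $P_{\alpha^*(u)}$ contributing to $\WHT_f(u)$, equals $(\widetilde c_0(u),\ldots,\widetilde c_{r-2}(u))$, which is the claim. As a by-product, $C_{\alpha^*(u)}(u)=2^{n/2}\zetaa{\ell}^{\widetilde c_{r-1}(u)}$, so the top digit of the dual is also determined by that cell.

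I do not expect a serious obstacle. The only points needing care are that $2^{n/2}$ is rational, which holds because $n$ is even, and that the $2^\ell$-adic digits of $\widetilde f(u)$ are uniquely determined, which follows from the injectivity argument in Lemma~\ref{lem:basis}. Together these make the basis comparison legitimate.
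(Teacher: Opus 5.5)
Your proposal is correct and is essentially the paper's argument: both write $\WHT_f(u)$ in the $\Q(\zetaa{\ell})$-basis $\mathcal{B}$ of Lemma~\ref{lem:basis} in two ways, once via the dual and once via \eqref{eq:WHT_decomp}, and compare coefficients. The only difference is ordering: the paper first invokes Corollary~\ref{cor:iff} to get the single active cell and then matches it against $\epsilon_u 2^{n/2}\zetaa{k}^{\widetilde f(u)}$, whereas you get the sparsity and the identification $\alpha^*(u)=(\widetilde c_0(u),\ldots,\widetilde c_{r-2}(u))$ together from the dual's defining identity, which is slightly more self-contained.
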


\begin{proof}
Since $f$ is gbent, Corollary~\ref{cor:iff} gives that for each $u\in\V$
there exists a unique $\alpha^*(u)\in(\Znl)^{r-1}$ such that
$C_{\alpha^*(u)}(u)\neq 0$ and $C_\alpha(u)=0$ for all $\alpha\neq\alpha^*(u)$
(when $\WHT_f(u)\neq 0$; the case $\WHT_f(u)=0$ cannot occur since $f$ is gbent).
From Equation~\eqref{eq:WHT_decomp},
\[
\WHT_f(u)
= C_{\alpha^*(u)}(u)\cdot\zetaa{k}^{\sum_{j=0}^{r-2}\alpha_j^*(u)2^{j\ell}}.
\]
Since $f$ is gbent and $k\geq 4$, Theorem~\ref{reg-landscape} gives
$\WHT_f(u)=\epsilon_u 2^{n/2}\zetaa{k}^{\widetilde{f}(u)}$
for some $\epsilon_u\in\{\pm 1,\pm i\}$.
Since $C_{\alpha^*(u)}(u)\in\Q(\zetaa{\ell})$ and
$\zetaa{k}^{\sum_{j=0}^{r-2}\alpha_j^*(u)2^{j\ell}}$ is a basis element
over $\Q(\zetaa{\ell})$ by Lemma~\ref{lem:basis}, comparing with
$\epsilon_u 2^{n/2}\zetaa{k}^{\widetilde{f}(u)}$ and writing
$\widetilde{f}(u)=\sum_{j=0}^{r-1}\widetilde{c}_j(u)2^{j\ell}$,
linear independence forces
\[
\sum_{j=0}^{r-2}\alpha_j^*(u)2^{j\ell}
\equiv\sum_{j=0}^{r-2}\widetilde{c}_j(u)2^{j\ell}\pmod{2^{(r-1)\ell}},
\]
hence $\widetilde{c}_j(u)=\alpha_j^*(u)$ for $j=0,\ldots,r-2$.
\end{proof}

 \subsection{Maiorana--McFarland functions}

The Maiorana--McFarland construction is one of the most important classes of
generalized bent functions. Motivated by the generalized modular-valued setting
of Kumar, Scholtz, and Welch~\cite{ksw}, we consider here the following
\emph{binary-input specialization} 
$f(x,y)=\langle x,\pi(y)\rangle+g(y)$,  $(x,y)\in \F^m\times \F^m$,
where $\pi:\F^m\to\F^m$ is a permutation, $g:\F^m\to\Znk$, and
$\langle x,\pi(y)\rangle=\sum_{i=1}^m x_i\pi(y)_i \pmod{2^k}$,
with $x_i,\pi(y)_i\in\{0,1\}$. Thus the integer
$N(x,y):=\sum_{i=1}^m x_i\pi(y)_i\in\{0,\ldots,m\}$
is well-defined before reduction modulo $2^k$, and
$\langle x,\pi(y)\rangle\equiv N(x,y)\pmod{2^k}$.
We emphasize that, in this binary-input specialization, higher $2^\ell$-adic
digits may be nontrivial whenever $N(x,y)\ge 2^\ell$, and carries may occur when
adding $N(x,y)$ and $g(y)$.

We now continue with the consequences of the general $2^\ell$-adic theory for this
class.

\begin{theorem}
\label{thm:MM}
Let
$f(x,y)=\langle x,\pi(y)\rangle+g(y)$
be a gbent function from $\F^{2m}\to\Znk$, where $\pi:\F^m\to\F^m$ is a
permutation, $g:\F^m\to\Znk$, and
$\langle x,\pi(y)\rangle=\sum_{i=1}^m x_i\pi(y)_i\pmod{2^k}$ with
$x_i,\pi(y)_i\in\{0,1\}$.
Let
$f=\sum_{j=0}^{r-1}2^{j\ell}c_j$
 and 
$g=\sum_{j=0}^{r-1}2^{j\ell}g_j$
be the $2^\ell$-adic decompositions of $f$ and $g$, respectively, where
$c_j:\F^{2m}\to\Znl$ and $g_j:\F^m\to\Znl$.
Then:
\begin{enumerate}
\item[\textup{(i)}] The components $c_j$ are the $2^\ell$-adic digits of
$f(x,y)\equiv N(x,y)+g(y)\pmod{2^k}$, hence are well-defined functions
$c_j:\F^{2m}\to\Znl$. If $\phi_j(x,y)\in\Znl$ denotes the $j$-th $2^\ell$-adic
digit of the integer $N(x,y)$ alone, then in general
\[
c_j(x,y)\equiv \phi_j(x,y)+g_j(y)+\lfloor{\frac{\phi_{j-1}(x,y)+g_{j-1}(y)}{2^{\ell}}}\rfloor \pmod{2^{\ell}},
\]

\item[\textup{(ii)}] For every $\beta=(\beta_0,\ldots,\beta_{r-2})\in(\Znl)^{r-1}$,
the derived function
$f_\beta(x,y):=c_{r-1}(x,y)+\sum_{j=0}^{r-2}\beta_j\,c_j(x,y)$
is landscape over $\Znl$ with the same set of landscape levels as $f$
(viewed over $\Znk$). In particular, if $f$ is gbent, then every $f_\beta$ is
gbent over $\Znl$.
\end{enumerate}
\end{theorem}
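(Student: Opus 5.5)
Part (i) is bookkeeping, and part (ii) is Theorem~\ref{thm:landscape_necessity} specialized to the Maiorana--McFarland setting.

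For (i), first I note that $N(x,y)=\sum_i x_i\pi(y)_i$ is an honest nonnegative integer in $\{0,\dots,m\}$. Hence $f(x,y)$ is the residue of the integer $N(x,y)+g(y)$ modulo $2^k$, where $g(y)$ is read as its representative in $\{0,\dots,2^k-1\}$. By uniqueness of the $2^\ell$-adic expansion of residues modulo $2^k$, the digits $c_j(x,y)\in\Znl$ are well defined.

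To get the displayed formula, I write $N=\sum_j \phi_j 2^{j\ell}$ and $g=\sum_j g_j2^{j\ell}$ and add them digit by digit in base $2^\ell$, as in the school addition algorithm. The $j$-th digit of the sum is then $\phi_j+g_j+\kappa_{j-1}\pmod{2^\ell}$, where $\kappa_{-1}=0$ and $\kappa_{j-1}$ is the carry out of position $j-1$. Any digits of $N$ at positions $\ge r$ disappear upon reduction modulo $2^k$, and they affect nothing below them. The displayed formula records the carry as $\lfloor(\phi_{j-1}+g_{j-1})/2^\ell\rfloor$. This agrees with the true carry whenever no carry propagates in from position $j-2$, which is the sense in which the statement says ``in general.'' I would state explicitly that the exact carry is $\kappa_{j-1}=\lfloor(\phi_{j-1}+g_{j-1}+\kappa_{j-2})/2^\ell\rfloor$, and remark that the formula as written is the one-step carry approximation, which is exact whenever $\phi_{j-2}+g_{j-2}+\kappa_{j-3}<2^\ell$. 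I expect this recursive-carry subtlety to be the only real obstacle. If the intended reading is literal, I would prove the formula under that no-double-carry proviso, for instance when $m<2^\ell$, so that $\phi_j=0$ for $j\ge1$.

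For (ii), I apply Theorem~\ref{thm:landscape_necessity} with $n=2m$, replacing $\V$ by $\F^{2m}$. The hypotheses $k=\ell r$, $\ell\ge2$, $r\ge2$ are the standing assumptions of the section. The function $f$ is gbent, so $|\WHT_f(u)|=2^{m}$ for all $u$, and therefore $f$ is landscape with the single level $(2m,1)$. The theorem then gives $|\WHT_{f_\beta}(u)|=|\WHT_f(u)|$ for every $u$ and every $\beta$. Thus each $f_\beta$ is landscape over $\Znl$ with the same levels as $f$. In the gbent case this means $|\WHT_{f_\beta}(u)|=2^{(2m)/2}$ for all $u$, i.e.\ every $f_\beta$ is gbent over $\Znl$.

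The argument uses no property of $\pi$ beyond the gbentness of $f$. The explicit digit description from (i) is not needed for (ii); it only identifies the $c_j$ concretely, for instance via the partition cells $P_\alpha$.
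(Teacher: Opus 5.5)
Your argument for (ii) and your well-definedness argument in (i) are exactly the paper's. Your carry diagnosis is also correct, and sharper than the paper, which extrapolates the formula from a two-digit carry example. Precisely, the one-step carry $\lfloor(\phi_{j-1}+g_{j-1})/2^\ell\rfloor$ differs from the true carry $\kappa_{j-1}=\lfloor(\phi_{j-1}+g_{j-1}+\kappa_{j-2})/2^\ell\rfloor$ exactly when $\kappa_{j-2}=1$ and $\phi_{j-1}+g_{j-1}=2^\ell-1$. So the displayed formula is always exact for $r=2$ (the only carry it uses comes out of position $0$, which receives none), and it can fail only for $r\ge 3$.

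The step that fails is your proposed instance of the no-double-carry proviso. Taking $m<2^\ell$ makes $\phi_j=0$ for $j\ge 1$, but it does not stop a carry out of position $0$ from propagating through a digit $g_{j-1}(y)=2^\ell-1$. Take $\ell=2$, $r=3$, $m=1$, $N(x,y)=1$ and $g(y)=15$ (digits $3,3,0$). Then $N+g=16$, so $c_2(x,y)=1$, while the formula gives $c_2\equiv 0+0+\lfloor 3/4\rfloor=0$. The proviso is therefore a condition on $g$, not on $m$. The clean fix is to state (i) with the recursive carry $\kappa_{j-1}$, or to restrict the one-step formula to $r=2$.

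On the ``literal reading'': the statement as written is vacuous, because for $k\ge 2$ no function of this binary-input form is gbent. Summing over $x$ gives
$\WHT_f(0,v)=\sum_y \zetaa{k}^{g(y)}(-1)^{\langle v,y\rangle}2^{m-w(y)}(1+\zetaa{k})^{w(y)}$, where $w(y)$ is the Hamming weight of $\pi(y)$. Since $\pi$ is a permutation, Parseval in $v$ yields $\sum_v|\WHT_f(0,v)|^2=2^m(4+|1+\zetaa{k}|^2)^m>2^{3m}$. This is incompatible with $|\WHT_f(0,v)|=2^m$ for all $v$.

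Hence your argument for (ii), like the paper's, is a valid implication whose premise is never met. The only substantive content of the theorem is the hypothesis-free digit bookkeeping in (i), and that is exactly where the carry correction is needed.
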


\begin{proof}
For part~\textup{(i)}, each value $f(x,y)\in\Znk$ has a unique $2^\ell$-adic
expansion modulo $2^k$, so the component functions $c_j:\F^{2m}\to\Znl$ are
well-defined. Equivalently, identifying $g(y)\in\Znk$ with its representative in
$\{0,\ldots,2^k-1\}$ and using $N(x,y)\in\{0,\ldots,m\}$, the components are the
$2^\ell$-adic digits of $N(x,y)+g(y)$ modulo $2^k$.

The failure of a componentwise formula can occur because of carries. For example,
take $\ell=1$, $k=2$ (so $r=2$), and suppose $N(x,y)=1$ and $g(y)=1$. Then
$\phi_0=1$, $\phi_1=0$, while $g_0=1$, $g_1=0$. But
$N(x,y)+g(y)=2=0+1\cdot 2$,
so $(c_0,c_1)=(0,1)$, whereas $(\phi_0+g_0,\phi_1+g_1)=(2,0)$ before reduction
in the lower block; the carry from block $0$ changes the higher digit. 
This translates to the formula
$c_j(x,y)\equiv \phi_j(x,y)+g_j(y)+\lfloor{\frac{\phi_{j-1}(x,y)+g_{j-1}(y)}{2^{\ell}}}\rfloor \pmod{2^{\ell}}$.

Part~\textup{(ii)} is an immediate application of
Theorem~\ref{thm:landscape_necessity}\textup{(i)} to the $2^\ell$-adic
decomposition of $f$. Since $f$ is gbent, it is landscape, and hence every
$f_\beta$ is landscape over $\Znl$ with the same levels. In particular, all
$f_\beta$ are gbent over $\Znl$.
\end{proof}

\begin{remark}
The key distinction from the classical Boolean M-M situation is that the
binary-input modular inner product here is not restricted to $\{0,1\}$:
$N(x,y)=\sum_{i=1}^m x_i\pi(y)_i\in\{0,\ldots,m\}$.
Hence its $2^\ell$-adic digits may contribute at several levels, and the addition
$N(x,y)+g(y)$ generally produces inter-block carries. Consequently, one should not
expect a simple formula $c_j=\phi_j+g_j$ without additional hypotheses. Nevertheless, part~\textup{(ii)} shows that the derived functions $f_\beta$ remain gbent unconditionally whenever $f$ is gbent.
\end{remark}

\subsection{Derivatives and algebraic degree}

The $t$-order derivative structure is perfectly preserved by the $2^\ell$-adic decomposition, enabling direct analysis of algebraic degree.

\begin{proposition}
\label{prop:derivatives}
Let $f: \V \to \Znk$ have $2^\ell$-adic components $c_0, \ldots, c_{r-1}$ where $k = \ell r$. Then for all $a_1, \ldots, a_t \in \V$ with $t \geq 1$,
\[
D_{a_1,\ldots,a_t}f(x) = \sum_{j=0}^{r-1} (D_{a_1,\ldots,a_t}c_j(x)) \cdot 2^{j\ell} \pmod{2^k}.
\]
\end{proposition}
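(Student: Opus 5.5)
The plan is to treat the derivative as a $\Z$-linear operator on integer-valued lifts and to reduce modulo $2^k$ only at the very end. First I fix integer representatives: for each $j$ and each $y\in\V$, let $\tilde c_j(y)\in\{0,\ldots,2^\ell-1\}$ be the standard representative of $c_j(y)$. Then the integer $\tilde f(y):=\sum_{j=0}^{r-1}\tilde c_j(y)2^{j\ell}$ lies in $\{0,\ldots,2^k-1\}$ and is the standard representative of $f(y)$. This is just the uniqueness of the $2^\ell$-adic expansion, which was already used in the proof of Lemma~\ref{lem:basis}.

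Next I apply the definition of the $t$-order derivative to these lifts. Write $\Delta\tilde f(x):=\sum_{S\subseteq[t]}(-1)^{|S|}\tilde f\bigl(x+\sum_{i\in S}a_i\bigr)\in\Z$. Since $\Delta$ is a finite alternating sum of evaluations, it is $\Z$-linear in the function. Substituting $\tilde f=\sum_j 2^{j\ell}\tilde c_j$ and exchanging the two finite sums gives the integer identity
\[
\Delta\tilde f(x)=\sum_{j=0}^{r-1}2^{j\ell}\,\Delta\tilde c_j(x),
\qquad
\Delta\tilde c_j(x):=\sum_{S\subseteq[t]}(-1)^{|S|}\tilde c_j\Bigl(x+\sum_{i\in S}a_i\Bigr)\in\Z .
\]
Reducing this modulo $2^k$ gives $D_{a_1,\ldots,a_t}f(x)$ on the left and the claimed expression on the right, provided $D_{a_1,\ldots,a_t}c_j(x)$ is read as the integer alternating sum $\Delta\tilde c_j(x)$.

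The only real obstacle is this interpretive point about well-definedness. If $D c_j$ is first reduced modulo $2^\ell$, then multiplying by $2^{j\ell}$ is well defined modulo $2^k$ only for $j=r-1$, because $2^{(r-1)\ell}\cdot2^\ell=2^k$. For $j<r-1$, changing the residue of $D c_j$ by a multiple of $2^\ell$ shifts the term $2^{j\ell}D c_j$ by $2^{(j+1)\ell}$, which is nonzero modulo $2^k$; these are exactly the inter-block carries. So the proof should state explicitly that each $D c_j$ is taken as an integer alternating sum of the lifted values $\tilde c_j$, with values in $[-2^{t-1}(2^\ell-1),\,2^{t-1}(2^\ell-1)]$, and reduced only after the weighted sum is formed.

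In the last paragraph I would add the consequence used for algebraic degree. With this convention, reducing the identity modulo $2^\ell$ gives $D_{a_1,\ldots,a_t}c_0\equiv D_{a_1,\ldots,a_t}f\pmod{2^\ell}$. Moreover, if every $\Delta\tilde c_j$ vanishes identically, then $D_{a_1,\ldots,a_t}f\equiv0$. Both statements are immediate from the displayed integer identity.
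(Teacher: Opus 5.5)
Your proof is correct and follows the same route as the paper: substitute $f=\sum_j 2^{j\ell}c_j$ into the alternating sum defining $D_{a_1,\ldots,a_t}$ and exchange the two finite sums. Your convention that $D_{a_1,\ldots,a_t}c_j$ means the integer alternating sum of the standard digit lifts, reduced modulo $2^k$ only after weighting by $2^{j\ell}$, is the reading the paper's computation relies on without stating it, and you are right that it is needed: reducing modulo $2^\ell$ first makes the terms with $j<r-1$ ill-defined (e.g.\ $\ell=1$, $r=2$, $t=1$, $c_1\equiv 0$, $c_0(x)=0$, $c_0(x+a)=1$ gives $D_af(x)\equiv 3\pmod 4$, but premature reduction gives $1$).
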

\begin{proof}
By definition, $D_{a_1,\ldots,a_t}f(x)$ is computed in the additive group $\Znk$, and the identity $f(z)=\sum_{j=0}^{r-1} c_j 2^{j\ell}$ holds in $\Znk$ for every $z$.
Thus,
\begin{align*}
D_{a_1,\ldots,a_t}f(x) &= \sum_{S \subseteq [t]} (-1)^{|S|} f\left(x + \sum_{i \in S} a_i\right) 
= \sum_{S \subseteq [t]} (-1)^{|S|} \sum_{j=0}^{r-1} c_j\left(x + \sum_{i \in S} a_i\right) 2^{j\ell} \\
&= \sum_{j=0}^{r-1} \left(\sum_{S \subseteq [t]} (-1)^{|S|} c_j\left(x + \sum_{i \in S} a_i\right)\right) 2^{j\ell} 
= \sum_{j=0}^{r-1} D_{a_1,\ldots,a_t}c_j(x) \cdot 2^{j\ell} \pmod{2^k},
\end{align*}
where $[t] = \{1, \ldots, t\}$ and the sum is over all subsets $S \subseteq [t]$.
\end{proof}

\begin{remark}
There is no carry issue in Proposition~\textup{\ref{prop:derivatives}}, since unlike the pointwise $2^\ell$-adic digit decomposition of a sum such as $N(x,y)+g(y)$, the operator $D_{a_1,\ldots,a_t}$  is an additive $\mathbb{Z}$-linear combination of values of $f$ in the abelian group $\Znk$. Hence it commutes with the fixed expansion $f=\sum_{j=0}^{r-1} c_j 2^{j\ell}$.
\end{remark}

\begin{corollary}
\label{cor:second_order}
Let $f: \V \to \Znk$ have $2^\ell$-adic components $c_0, \ldots, c_{r-1}$. Then $f$ satisfies the second-order derivative property (i.e., $D_{a,b}f$ is constant for all $a,b \in \V$) if and only if each component $c_j$ satisfies this property over $\Znl$. Equivalently, $f$ has algebraic degree at most $2$ over $\Znk$ if and only if each $c_j$ has algebraic degree at most $2$ over $\Znl$.
\end{corollary}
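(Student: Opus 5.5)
The plan is to derive the statement directly from Proposition~\ref{prop:derivatives} with $t=2$, which gives, for all $a,b\in\V$ and $x\in\V$,
\[
D_{a,b}f(x)=\sum_{j=0}^{r-1} D_{a,b}c_j(x)\,2^{j\ell}\pmod{2^k}.
\]
The care needed is that each $D_{a,b}c_j(x)$ is computed in $\Znl$, while the identity above is in $\Znk$. So I would first make the identification precise. Choose integer representatives $c_j(z)\in\{0,\dots,2^\ell-1\}$ and let $\delta_j(x)\in\{-2^{\ell+1},\dots,2^{\ell+1}\}$ be the integer alternating sum $c_j(x)-c_j(x+a)-c_j(x+b)+c_j(x+a+b)$. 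Then $D_{a,b}f(x)\equiv\sum_j \delta_j(x)2^{j\ell}\pmod{2^k}$, and $D_{a,b}c_j(x)\equiv\delta_j(x)\pmod{2^\ell}$.

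For the direction ``each $c_j$ quadratic $\Rightarrow$ $f$ quadratic'' I would argue componentwise. If every $D_{a,b}c_j$ is constant modulo $2^\ell$, then each $\delta_j(x)$ is constant modulo $2^\ell$. However, $\delta_j(x)2^{j\ell}$ is then only determined modulo $2^{(j+1)\ell}$, not modulo $2^k$. Here lies the main obstacle: the carries produced by the integer (not modular) values of the lower $\delta_j$ can vary with $x$ and spill into higher digits. I would try to control this by working inductively from the top digit down, or by replacing the digit-level claim with the degree-level claim.

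For the degree-level claim I would use the standard fact that the algebraic degree over $\Z_{2^k}$ of a function is at most $d$ iff all $(d+1)$-th order derivatives vanish identically. I would then apply Proposition~\ref{prop:derivatives} with $t=3$, and pair it with the converse: if $f$ has degree $\le 2$, then the top digit $c_{r-1}=\lfloor f/2^{k-\ell}\rfloor$ and lower digits are obtained from $f$ by reduction and shifts, for which the generalized ANF (expansion in products of the binary coordinates $a_i$ with integer coefficients) lets me track degrees.

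Concretely, the key steps in order are: (1) state the vanishing-derivative criterion for degree in the modular setting; (2) show that $D_{a,b,c}f\equiv 0$ implies the lowest digit satisfies $D_{a,b,c}c_0\equiv0\pmod{2^\ell}$ by reducing mod $2^\ell$; (3) subtract $c_0$, divide by $2^\ell$ (legitimate after showing the carry term $(\delta_0-\overline{\delta_0})/2^\ell$ is itself a derivative expression), and induct on $r$; (4) for the reverse, sum the component identities.

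I expect step (3), the carry analysis, to be the genuine difficulty. If it fails, I would fall back to proving the ``iff'' only for the exact, carry-free identity of Proposition~\ref{prop:derivatives}, interpreting ``each $c_j$ satisfies the property'' as $D_{a,b}c_j$ constant when lifted to $\Z$-valued alternating sums.
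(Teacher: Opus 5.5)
\textbf{The statement is false, so your plan cannot be completed.} The carry problem you flag is the reason, and it is not something to engineer around: it makes the ``if'' direction false. So your step (4), ``sum the component identities,'' cannot work. The paper's proof fails for the same reason. It is exactly the one-line reading of Proposition~\ref{prop:derivatives} you were suspicious of. That identity only makes sense with integer alternating sums $\delta_j$, and constancy of $\delta_j$ modulo $2^\ell$ says nothing about $\delta_j 2^{j\ell}$ modulo $2^k$.

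Here is a counterexample. Take $\ell=r=2$, $k=4$, $n=2$, $c_0(x)=x_1$, $c_1=0$, so $f(x)=x_1\in\Z_{16}$. The first coordinate of $x+e_1$ is $1-x_1$. From this, $D_{a,b}c_0(x)=0$ unless $a_1=b_1=1$, in which case $D_{a,b}c_0(x)=4x_1-2\equiv 2\pmod 4$. So both components have constant second derivatives over $\Z_4$. Yet $D_{e_1,e_1}f(x)=2f(x)-2f(x+e_1)=4x_1-2$, which takes the values $14$ and $2$ in $\Z_{16}$.

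Your step (1) is also false in this setting. Derivatives use addition in $\V$ while values lie in $\Z_{2^k}$, so they need not lower the $\Z_{2^k}$-degree. For example, $D_{e_1,e_1,e_1}x_1=8x_1-4\not\equiv 0\pmod{2^k}$ for $k\ge 3$, although $x_1$ has degree $1$. The same function shows that the corollary's ``Equivalently'' (second-derivative property versus degree at most $2$) fails for $k\ge 3$.

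Your fallback does not rescue the statement either. Demanding that the integer-lifted $D_{a,b}c_j$ be constant forces $c_j$ itself to be constant: for $a=b$, the quantity $2(c_j(x)-c_j(x+a))$ changes sign under $x\mapsto x+a$, so it must vanish. The ``only if'' direction then fails for $f=4x_1\in\Z_{16}$. Its second derivatives are constant ($0$ or $8$), while $c_1=x_1$ is not constant.

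What does survive, for $\ell\ge 2$, is ``$f$ has the property $\Rightarrow$ each $c_j$ has it,'' but it needs a different argument. Constancy of $D_{a,a}f=2(f(x)-f(x+a))$ for all $a$ forces $f=c+2^{k-2}L+2^{k-1}h$, with $c$ constant, $L$ linear Boolean and $h$ Boolean. Hence $c_0,\dots,c_{r-2}$ are constant and $D_{a,b}f\equiv 2^{k-\ell}D_{a,b}c_{r-1}\pmod{2^k}$, which gives the claim for $c_{r-1}$.
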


\begin{proof}
By Proposition~\ref{prop:derivatives} with $t=2$, we have $D_{a,b}f(x) = \sum_{j=0}^{r-1} (D_{a,b}c_j(x)) \cdot 2^{j\ell}$. The function $f$ satisfies the second-order derivative property if and only if $D_{a,b}f$ is constant for all $a, b$, which holds if and only if each $D_{a,b}c_j$ is constant, equivalent to each $c_j$ having algebraic degree at most $2$.
\end{proof}

\subsection{Differential properties}

We now analyze how the differential spectrum interacts with the $2^\ell$-adic decomposition. We start with a bound for the component differentials.

\begin{proposition}
\label{prop:differential_bound}
Let $f: \V \to \Znk$ have $2^\ell$-adic components $c_0, \ldots, c_{r-1}$. Then for $a \in \V \setminus \{0\}$ and $b \in \Znk$ with $2^\ell$-adic decomposition $b = \sum_{j=0}^{r-1} b_j 2^{j\ell}$,
\[
\Delta_f(a,b) \leq \min_{0 \leq j \leq r-1} \Delta_{c_j}(a, b_j).
\]
\end{proposition}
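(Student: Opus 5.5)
The plan is to show that the solution set of the equation for $f$ sits inside the solution set of each component equation. Concretely, write
\[
\mathcal{X}_f(a,b)=\{x\in\V: f(x+a)-f(x)=b\}
\quad\text{and}\quad
\mathcal{X}_{c_j}(a,b_j)=\{x\in\V: c_j(x+a)-c_j(x)=b_j \text{ in } \Znl\}.
\]
The aim is to prove $\mathcal{X}_f(a,b)\subseteq\mathcal{X}_{c_j}(a,b_j)$ for every $j$. Taking cardinalities then gives $\Delta_f(a,b)\le\Delta_{c_j}(a,b_j)$ for each $j$, and hence the inequality with the minimum.

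For the inclusion I would start from Proposition~\ref{prop:derivatives} with $t=1$, which gives
\[
D_a f(x)=\sum_{j=0}^{r-1}\bigl(D_a c_j(x)\bigr)\,2^{j\ell}\pmod{2^k}.
\]
Suppose $x\in\mathcal{X}_f(a,b)$, so the left side equals $b=\sum_j b_j2^{j\ell}$. Lift each $D_ac_j(x)\in\Znl$ to its representative $d_j\in\{0,\dots,2^\ell-1\}$. Then $\sum_j d_j2^{j\ell}$ lies in $\{0,\dots,2^k-1\}$, and it is congruent to $b$ modulo $2^k$. Both sides are therefore genuine base-$2^\ell$ expansions of the same residue. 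The uniqueness argument in the proof of Lemma~\ref{lem:basis}, where the digit blocks $j\ell,\dots,(j+1)\ell-1$ do not overlap, would then give $d_j=b_j$ for all $j$. This says exactly that $x\in\mathcal{X}_{c_j}(a,b_j)$.

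The $j=0$ case is unconditional and I would dispose of it first. Reducing $f(x+a)-f(x)\equiv b$ modulo $2^\ell$ kills every term with $j\ge1$ and leaves $c_0(x+a)-c_0(x)\equiv b_0\pmod{2^\ell}$. Hence $\Delta_f(a,b)\le\Delta_{c_0}(a,b_0)$ holds in all cases.

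The main obstacle is the digit-matching step for $j\ge1$. The issue is that Proposition~\ref{prop:derivatives} is an identity in $\Znk$. The true difference $f(x+a)-f(x)$, computed digit by digit from $c_j(x+a)$ and $c_j(x)$, can involve a borrow from block $j-1$ into block $j$. In that case the $j$-th digit of $b$ equals $c_j(x+a)-c_j(x)-\epsilon_{j}(x)$ with $\epsilon_j(x)\in\{0,1\}$, rather than $D_ac_j(x)$ itself. So the identification $d_j=b_j$ above is only legitimate once one knows that no borrows occur.

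To handle this, I would first try to show that the borrow $\epsilon_j(x)$ is forced to vanish on $\mathcal{X}_f(a,b)$. The route would be an induction on $j$: once $c_{j-1}(x+a)-c_{j-1}(x)=b_{j-1}$ is established, one checks when the subtraction in block $j-1$ wraps around. If this cannot be arranged in general, I would restrict the inclusion to the subset of $\mathcal{X}_f(a,b)$ on which all borrows vanish. I would also record exactly which additional hypothesis on $f$ (for example, $c_{j-1}(x+a)\ge c_{j-1}(x)$ as integers on the relevant set) makes the inequality hold for that $j$.

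This is the step I expect to require the most care, and it is where the statement as worded may need such a hypothesis for $j\ge1$.
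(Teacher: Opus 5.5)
\paragraph{The $j\ge 1$ case cannot be fixed.} You were right to worry about $j\ge 1$. The inclusion $\mathcal{X}_f(a,b)\subseteq\mathcal{X}_{c_j}(a,b_j)$ fails there, and so does the proposition. Take $n=2$ and $\ell=r=2$ (so $k=4$), and let $f(x)=1-x_1\in\Z_{16}$ with $x_1\in\{0,1\}$. Then $c_0=f$ (values $0,1$ in $\Z_4$) and $c_1\equiv 0$. Let $a=(1,0)$. For the two $x$ with $x_1=0$ we get $f(x+a)-f(x)=-1\equiv 15$, so $\Delta_f(a,15)=2$. But $15=3+3\cdot 4$ gives $b_1=3$ and $\Delta_{c_1}(a,3)=0$, so $\Delta_f(a,15)>\min_j\Delta_{c_j}(a,b_j)$. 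The borrow out of block $0$ is exactly what breaks the inclusion.

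This is why your planned induction forcing $\epsilon_j(x)=0$ on $\mathcal{X}_f(a,b)$ cannot succeed. We have $\epsilon_j(x)=1$ precisely when $\sum_{i<j}c_i(x+a)2^{i\ell}<\sum_{i<j}c_i(x)2^{i\ell}$ as integers, and this is not determined by $b_0,\dots,b_{j-1}$. For instance, in $\Z_4$ the pairs $(c_0(x),c_0(x+a))=(1,0)$ and $(0,3)$ both give $b_0=3$, but only the first borrows.

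Your second paragraph is also invalid as written. Proposition~\ref{prop:derivatives} is correct only if $c_j(x+a)-c_j(x)$ is kept as an integer in $(-2^\ell,2^\ell)$. Once it is reduced into $\Znl$, multiplying by $2^{j\ell}$ with $j<r-1$ is not well defined modulo $2^k$. In the example, $d_0=3$ and $d_1=0$, but $3\neq 15=D_af(x)$.

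\paragraph{What can be proved.} The paper's proof does not get around this. It asserts $c_j(x+a)-c_j(x)\equiv b_j \pmod{2^\ell}$ for every $j$ ``since the $2^\ell$-adic representation is unique''. That is the same digit-matching step you rightly refused to take for granted. Along your lines, the following statements do hold:
\begin{itemize}
\item Your $j=0$ argument is correct, giving $\Delta_f(a,b)\le\Delta_{c_0}(a,b_0)$. This is all that the second claim of Corollary~\ref{cor:differential_uniform} actually uses.
\item Reducing modulo $2^{(j+1)\ell}$ bounds $\Delta_f(a,b)$ by the differential of the truncation $\sum_{i\le j}2^{i\ell}c_i$ over $\Z_{2^{(j+1)\ell}}$, evaluated at $b \bmod 2^{(j+1)\ell}$.
\item Since $\epsilon_j(x)\in\{0,1\}$, one gets $\Delta_f(a,b)\le\Delta_{c_j}(a,b_j)+\Delta_{c_j}(a,b_j+1)$ for every $j$.
\item If $2^{j\ell}\mid b$, then on $\mathcal{X}_f(a,b)$ the truncations $\sum_{i<j}c_i2^{i\ell}$ at $x+a$ and at $x$ coincide. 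So no borrow enters block $j$, and $\Delta_f(a,b)\le\Delta_{c_j}(a,b_j)$ does hold for that $j$ (for all $j$ when $b=0$).
\end{itemize}
The right outcome of your proposal is therefore a counterexample together with one of these corrected statements, not a proof of the proposition as worded.
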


\begin{proof}
By definition, $\Delta_f(a,b) = |\{x \in \V : f(x+a) - f(x) \equiv b \pmod{2^k}\}|$. 
Write $f(x) = \sum_{j=0}^{r-1} c_j(x) 2^{j\ell}$ and $b = \sum_{j=0}^{r-1} b_j 2^{j\ell}$ with $b_j \in \Znl$. Then
$$
f(x+a) - f(x) = \sum_{j=0}^{r-1} [c_j(x+a) - c_j(x)] 2^{j\ell}.
$$
For this to equal $b$ modulo $2^k$, we need $c_j(x+a) - c_j(x) \equiv b_j \pmod{2^\ell}$ for each $j = 0, \ldots, r-1$, since the $2^\ell$-adic representation is unique modulo $2^k$ when $k = \ell r$.
Therefore,
$$
\{x : f(x+a) - f(x) \equiv b \pmod{2^k}\} = \bigcap_{j=0}^{r-1} \{x : c_j(x+a) - c_j(x) \equiv b_j \pmod{2^\ell}\},
$$
which gives $\Delta_f(a,b) \leq \min_{j} \Delta_{c_j}(a,b_j)$.
\end{proof}

\begin{theorem}
\label{thm:differential_affine}
Let $f: \V \to \Znk$ have $2^\ell$-adic components $c_0, \ldots, c_{r-1}$. For $\beta \in (\Znl)^{r-1}$, let $g_\beta(x) = c_{r-1}(x) + \sum_{j=0}^{r-2}\beta_j c_j(x)$. Then for $a \in \V \setminus \{0\}$ and $b \in \Znl$,
\[
\Delta_{g_\beta}(a,b) = \sum_{\substack{\delta \in (\Znl)^r \\ \delta_{r-1} + \sum_{j=0}^{r-2}\beta_j \delta_j \equiv b \pmod{2^\ell}}} \left|\left\{x \in \V : \forall j, c_j(x+a) - c_j(x) = \delta_j\right\}\right|.
\]
\end{theorem}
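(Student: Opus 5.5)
The plan is a direct counting argument: partition the solution set of $g_\beta(x+a)-g_\beta(x)\equiv b$ according to the full vector of component differences. Fix $a\neq 0$ and $b\in\Znl$. For each $x\in\V$ define
\[
\delta(x)=\bigl(c_0(x+a)-c_0(x),\ldots,c_{r-1}(x+a)-c_{r-1}(x)\bigr)\in(\Znl)^r ,
\]
with all differences computed in $\Znl$. The sets $T_\delta=\{x\in\V:\delta(x)=\delta\}$, $\delta\in(\Znl)^r$, form a partition of $\V$. This is the same well-definedness argument as in Lemma~\ref{lem:partition}: every $x$ lies in $T_{\delta(x)}$, and distinct $\delta$ give disjoint sets.

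Next, I would use that $g_\beta$ is a $\Znl$-linear combination of the $c_j$ with coefficient $1$ on $c_{r-1}$. Because the coefficients $\beta_j$ are constants, for every $x$
\[
g_\beta(x+a)-g_\beta(x)\equiv \bigl(c_{r-1}(x+a)-c_{r-1}(x)\bigr)+\sum_{j=0}^{r-2}\beta_j\bigl(c_j(x+a)-c_j(x)\bigr)\pmod{2^\ell}.
\]
This is simply distributivity in the ring $\Znl$. As in the remark after Proposition~\ref{prop:derivatives}, no carries arise, since everything lives in $\Znl$ rather than in digits of $\Znk$. It follows that the value of the derivative at $x$ is determined by $\delta(x)$: it equals $L_\beta(\delta(x))$, where $L_\beta(\delta)=\delta_{r-1}+\sum_{j=0}^{r-2}\beta_j\delta_j$.

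From this, $x$ is a solution of $g_\beta(x+a)-g_\beta(x)=b$ exactly when $\delta(x)$ lies in the affine hyperplane $\{\delta: L_\beta(\delta)\equiv b\}$. So the solution set is the disjoint union of the cells $T_\delta$ over that hyperplane. Taking cardinalities and using disjointness gives the stated formula, and cells with $T_\delta=\emptyset$ contribute zero.

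The argument is elementary and I do not expect a genuine obstacle. The only point needing care is stating clearly that all arithmetic in the definition of $\delta$ and of $g_\beta$ is in $\Znl$, so that linearity holds exactly and no interaction with the $2^\ell$-adic carry structure of $f$ over $\Znk$ occurs. It is also worth noting that the $\delta$ ranging over the full group $(\Znl)^r$, rather than only realized values, is harmless.
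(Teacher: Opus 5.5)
Your proposal is correct and matches the paper's proof: partition $\V$ into the cells $S_\delta$ according to the vector of component differences, use $\Znl$-linearity of $g_\beta$ to see that its derivative at $x$ is $\delta_{r-1}(x)+\sum_{j=0}^{r-2}\beta_j\delta_j(x)$, and sum the cell sizes over those $\delta$ satisfying the constraint. Your remarks that all arithmetic happens in $\Znl$, so no carries arise, and that empty cells contribute zero, are accurate.
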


\begin{proof}
By definition, $\Delta_{g_\beta}(a,b) = |\{x : g_\beta(x+a) - g_\beta(x) = b \pmod{2^\ell}\}|$. Expanding gives
\[
g_\beta(x+a) - g_\beta(x) = [c_{r-1}(x+a) - c_{r-1}(x)] + \sum_{j=0}^{r-2}\beta_j [c_j(x+a) - c_j(x)].
\]
Let $\delta_j(x) = c_j(x+a) - c_j(x) \in \Znl$ for each $j$. The condition $g_\beta(x+a) - g_\beta(x) = b$ becomes
\[
\delta_{r-1}(x) + \sum_{j=0}^{r-2}\beta_j \delta_j(x) \equiv b \pmod{2^\ell}.
\]
Partitioning $\V$ according to the tuple $\delta(x) = (\delta_0(x), \ldots, \delta_{r-1}(x)) \in (\Znl)^r$, for each fixed $\delta = (\delta_0, \ldots, \delta_{r-1})$, define
\[
S_\delta = \left\{x \in \V : \forall j \in \{0,\ldots,r-1\}, c_j(x+a) - c_j(x) = \delta_j\right\}.
\]
The sets $\{S_\delta\}_{\delta \in (\Znl)^r}$ partition $\V$, so
\[
\Delta_{g_\beta}(a,b) = \sum_{\substack{\delta \in (\Znl)^r \\ \delta_{r-1} + \sum_{j=0}^{r-2}\beta_j \delta_j \equiv b \pmod{2^\ell}}} |S_\delta|,
\]
completing the proof.
\end{proof}

\begin{remark}
Theorem~\textup{\ref{thm:differential_affine}} reveals that the differential properties of $g_\beta$ are completely determined by the joint differential distribution of the component functions. The sets $S_\delta$ capture the simultaneous differential behavior of all components, and the differential of $g_\beta$ is obtained by summing the sizes of those cells where the weighted combination of component differentials equals the target difference.
\end{remark}

\begin{corollary} 
\label{cor:differential_uniform}
If each component $c_j: \V \to \Znl$ is differentially $\delta$-uniform, then each $g_\beta$ is differentially at most $\delta \cdot 2^{\ell(r-1)}$-uniform, and $f$ is at most 
differentially $\delta$-uniform.
\end{corollary}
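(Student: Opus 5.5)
The corollary has two claims, and I will treat them separately, taking the second (about $f$) first since it is immediate.

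\emph{Claim about $f$.} Proposition~\ref{prop:differential_bound} gives, for every $a\neq 0$ and every $b=\sum_j b_j2^{j\ell}\in\Znk$,
\[
\Delta_f(a,b)\le \min_j \Delta_{c_j}(a,b_j)\le \Delta_{c_{r-1}}(a,b_{r-1})\le \delta .
\]
Taking the maximum over $a\neq 0$ and $b$ shows that $f$ is at most differentially $\delta$-uniform. No further structure is needed; in fact any single component suffices for this bound.

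\emph{Claim about $g_\beta$.} Here I will use the exact formula of Theorem~\ref{thm:differential_affine}. It writes $\Delta_{g_\beta}(a,b)$ as a sum of $|S_\delta|$ over those $\delta\in(\Znl)^r$ with $\delta_{r-1}+\sum_{j\le r-2}\beta_j\delta_j\equiv b$.

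\begin{enumerate}
\item For each choice of the first $r-1$ coordinates $(\delta_0,\ldots,\delta_{r-2})$, the constraint determines $\delta_{r-1}$ uniquely. So at most $2^{\ell(r-1)}$ tuples $\delta$ contribute to the sum.
\item For each contributing $\delta$, we have $S_\delta\subseteq\{x: c_{r-1}(x+a)-c_{r-1}(x)=\delta_{r-1}\}$. Hence $|S_\delta|\le \Delta_{c_{r-1}}(a,\delta_{r-1})\le\delta$.
\item Multiplying gives $\Delta_{g_\beta}(a,b)\le \delta\cdot 2^{\ell(r-1)}$ for all $a\neq 0$ and all $b$.
\end{enumerate}

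I expect no real obstacle; the only point needing care is the counting in step 1.

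A sharper alternative would use a different index: let $j^*$ be any index with $\beta_{j^*}$ invertible, or $j^*=r-1$. One could then try to bound $|S_\delta|$ via $c_{j^*}$ and count the free coordinates. That route is not needed, since the crude count above already gives the stated bound.

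One subtlety to flag: the notation $\delta$ serves both as the uniformity parameter and as the tuple index in Theorem~\ref{thm:differential_affine}. In the written proof I will rename the tuple, for example to $\eta$.
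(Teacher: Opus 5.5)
Your argument for $g_\beta$ is correct and matches the paper's. The constraint determines $\delta_{r-1}$, leaving $2^{\ell(r-1)}$ tuples, and each $|S_\delta|\le\Delta_{c_{r-1}}(a,\delta_{r-1})\le\delta$. The step for $f$, however, rests on a false inequality.

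You invoke Proposition~\ref{prop:differential_bound}, as the paper's proof does. But that proposition's proof treats the $2^\ell$-adic digits of $f(x+a)-f(x)$ as the digit-wise differences $c_j(x+a)-c_j(x)$, which ignores borrows. This is valid only for $j=0$. For $j\ge 1$, writing $f(x+a)=f(x)+b$ gives $c_j(x+a)-c_j(x)\in\{b_j,\,b_j+1\}$, depending on a carry from the lower digits.

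Here is a concrete counterexample. Take $\ell=r=2$ (so $k=4$), and let $f:\V\to\Z_{16}$ depend only on the first coordinate $x_1$, with $f=3$ if $x_1=0$ and $f=4$ if $x_1=1$. Let $a=e_1$ be the first unit vector.
\begin{itemize}
\item $f(x+a)-f(x)$ equals $1$ or $15$, so $\Delta_f(e_1,1)=2^{n-1}$.
\item The target $b=1$ has top digit $b_1=0$.
\item Since $c_1(x)=x_1$, we get $c_1(x+a)-c_1(x)\in\{1,3\}$, so $\Delta_{c_1}(e_1,0)=0$.
\end{itemize}
Thus $\Delta_f(a,b)\le\Delta_{c_{r-1}}(a,b_{r-1})$ fails, and your remark that ``any single component suffices'' is wrong. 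Through $c_{r-1}$, or any $c_j$ with $j\ge 1$, the carry argument only gives $\Delta_f(a,b)\le\Delta_{c_j}(a,b_j)+\Delta_{c_j}(a,b_j+1)\le 2\delta$.

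The fix is to use the lowest component instead. Since $f\equiv c_0\pmod{2^\ell}$, the condition $f(x+a)-f(x)\equiv b\pmod{2^k}$ implies $c_0(x+a)-c_0(x)\equiv b_0\pmod{2^\ell}$. Hence $\Delta_f(a,b)\le\Delta_{c_0}(a,b_0)\le\delta$ for all $a\neq 0$ and all $b$. This proves the claim about $f$, and it needs only $c_0$ to be differentially $\delta$-uniform.
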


\begin{proof}
For each $\delta\in(\Znl)^r$, the set $S_\delta$ has size at most $\delta$ by the uniformity assumption on each $c_j$.
For fixed $a$ and $b$, the constraint $\delta_{r-1}+\sum_{j=0}^{r-2}\beta_j\delta_j\equiv b\pmod{2^\ell}$
fixes $\delta_{r-1}$ in terms of $(\delta_0,\ldots,\delta_{r-2})$,
so exactly $2^{\ell(r-1)}$ tuples $\delta$ satisfy it.
Hence $\Delta_{g_\beta}(a,b)\leq \delta\cdot 2^{\ell(r-1)}$.
For $f$, Proposition~\ref{prop:differential_bound} gives
$\Delta_f(a,b)\leq\delta$ directly.
\end{proof}

\section{Relationship to classical S-boxes}
\label{sec:classical-sboxes}

The constructions presented in this paper provide systematic methods for building
generalized Boolean functions with prescribed Walsh spectral properties. A natural
question arises: do existing cryptographic S-boxes, when viewed as generalized Boolean
functions $f: \mathbb{F}_2^n \to \mathbb{Z}_{2^k}$ via the canonical binary
decomposition $f(x) = \sum_{i=0}^{k-1} f_i(x) \cdot 2^i$, already possess these
properties? In this section, we provide empirical evidence that the answer is negative,
thereby validating the necessity of our constructive approach.

We analyzed several widely-used cipher S-boxes, including PRESENT~\cite{present},
GIFT~\cite{gift}, PRINCE~\cite{prince}, and SKINNY~\cite{skinny}. For each S-box
$S: \mathbb{F}_2^4 \to \mathbb{F}_2^4$, we computed its Walsh-Hadamard transform as
a function $f: \mathbb{F}_2^4 \to \mathbb{Z}_{16}$.
We then examined whether the magnitudes $|\WHT_f(u)|$ satisfy the integer-valued property required by our characterization theorems for generalized bent, generalized plateaued, or landscape functions.

Standard cipher S-boxes, when viewed as generalized Boolean functions $f: \mathbb{F}_2^n \to \mathbb{Z}_{2^k}$, exhibit non-integer Walsh magnitude values and therefore do not satisfy the characterizations established in this paper.

\begin{example}[PRESENT S-box analysis]
\label{ex:present_analysis}
The PRESENT S-box~\cite{present}, viewed as $f: \mathbb{F}_2^4 \to \mathbb{Z}_{16}$,
has a Walsh spectrum containing $15$ distinct magnitude values. A sample of these
magnitudes includes
$|\WHT_f(0)| = 0.000$,  $|\WHT_f(1)| \approx 1.104$,  $|\WHT_f(2)| \approx 1.298$, 
$|\WHT_f(3)| \approx 2.054$,  $|\WHT_f(4)| \approx 2.299$,  $|\WHT_f(5)| = 2\sqrt{2}$.
This S-box was designed to optimize Boolean cryptographic properties (nonlinearity, differential uniformity, algebraic immunity), not the specific component relationships
required by our characterization theorems. By Theorem~\textup{\ref{reg-landscape}}, any landscape function over
$\Znk$ must have Walsh magnitudes in
$\{2^{m/2}v : m\in\N_0,\, v\in 2\N_0+1\}$.
The observed non-integer magnitudes confirm that $f$ is not landscape, and in particular is neither gbent nor gplateaued.
Moreover, by the contrapositive of Theorem~\textup{\ref{thm:landscape_necessity}},
there must exist some  
$F: (\Znl)^{r-1} \to \Z_{2^m}$ for which 
$f_F(x) = c_{r-1}(x) + 2^{\ell-m}\,F(c_0(x),\ldots,c_{r-2}(x))$ fails to be landscape over $\Znl$ with the same magnitudes as $f$, confirming that the component relationships required by our
characterization theorems are absent in Boolean-optimized designs.
\end{example}

Interestingly, both PRESENT and GIFT S-boxes exhibit the Walsh magnitude
$2\sqrt{2}$, a quadratic algebraic number. This value is the only
``simple'' algebraic number appearing in their spectra; all other magnitudes are
higher-degree algebraic numbers from the cyclotomic field $\mathbb{Q}(\zeta_{16})$.
The appearance of $2\sqrt{2} = 2^{3/2}$ may be related to half-bent structure, as
it lies between the fully bent magnitude $2^{n/2} = 4$ and the plateaued structure.

The failure of classical S-boxes to satisfy the generalized landscape property has
a clear algebraic explanation. Standard cipher S-boxes are designed as Boolean
vectorial functions $(f_0, \ldots, f_{k-1}): \mathbb{F}_2^n \to \mathbb{F}_2^k$,
where each component $f_i$ is optimized independently for cryptographic properties
such as high nonlinearity (resistance to linear cryptanalysis), low differential
uniformity (resistance to differential cryptanalysis), and algebraic complexity
(resistance to algebraic attacks). These design criteria are orthogonal to the
spectral structure required by our theorems.

Specifically, for a function $f: \mathbb{F}_2^n \to \mathbb{Z}_{2^k}$ to be
generalized bent, Corollary~\ref{cor:iff} requires $1$-sparsity of the partition
coefficients $C_\alpha(u)$ for each $u \in \V$: exactly one coefficient can be
nonzero. For $f$ to be $s$-gplateaued, Theorem~\ref{thm:plateaued_common_arg}
requires the nonzero coefficients to satisfy the common-argument hypothesis and the
derived functions $g_\beta$ to be $s$-gplateaued over $\Znl$. For $f$ to be
landscape, Theorem~\ref{thm:landscape_necessity} requires  at least every function in the
affine space $c_{r-1} + \langle c_0, \ldots, c_{r-2}\rangle$ to be landscape over
$\Znl$ with identical Walsh magnitudes. None of these conditions are imposed by
classical S-box design methodologies.

Furthermore, when viewed as a function to $\mathbb{Z}_{2^k}$, the Walsh transform
involves $2^k$-th roots of unity, so the magnitudes $|\WHT_f(u)|$ are in general
algebraic numbers of high degree over $\mathbb{Q}$. For these magnitudes to belong
to the set $\{2^{m/2}v : m \in \N_0, v \in 2\N_0+1\}$ as required for landscape
functions (Theorem~\ref{reg-landscape}), the phases $\zeta_{2^k}^{f(x)}$ must align
in very specific geometric configurations---precisely the configurations guaranteed
by our construction theorems via the $2^\ell$-adic decomposition, but absent in
Boolean-optimized S-boxes. This confirms that the $2^\ell$-adic perspective provides
genuinely new construction tools, complementary to classical Boolean design
methodology.

\section{Conclusion and future research directions}
\label{sec:conclusion}

We have introduced the $2^\ell$-adic decomposition as a powerful technique for
analyzing and constructing generalized Boolean functions with prescribed cryptographic
properties, combining additive combinatorics with Galois theory of cyclotomic
extensions.

Our foundational contribution is Theorem~\ref{thm:overconstrained}, proving
unconditionally that two-level magnitude spectra force extreme sparsity under the
common-argument hypothesis, with Theorem~\ref{thm:overconstrained_multilevel}
providing a conditional multi-level extension. 
Our main results on the $2^\ell$-adic decomposition are  Theorem~\ref{thm:landscape_necessity} and Theorem~\ref{thm:landscape_iff}.  Theorem~\ref{thm:landscape_necessity} establishes
unconditionally that if $f: \V \to \Znk$ is landscape, then every function in the
affine space $c_{r-1} + \langle c_0, \ldots, c_{r-2} \rangle$ is landscape over $\Znl$ with identical Walsh magnitudes, and, furthermore,  in general,  any function  $f_F(x) = c_{r-1}(x) + 2^{\ell-m}\,F(c_0(x),\ldots,c_{r-2}(x))$, where $F: (\Znl)^{r-1} \to \Z_{2^m}$, for any $1\leq m\leq\ell$, is landscape over $\Znl$ with identical Walsh magnitudes. A complete characterization via a small subset of maps
$f_F$
is established in Theorem~\ref{thm:landscape_iff}, and a spectral sparsity interpretation in Corollary \ref{cor:sparsity}. Sufficiency
under natural structural assumptions is established 
when the nonzero partition coefficients satisfy the common-argument hypothesis (Theorem~\ref{thm:plateaued_common_arg}
for gplateaued functions). For gbent functions, Corollary~\ref{cor:main} and and Proposition~\ref{thm:iff_basis} synthesize these into a complete (conditional) characterization, Corollary~\ref{cor:iff} provides the spectral
interpretation as $1$-sparsity of the coefficients $C_\alpha(u)$.
Example~\ref{ex:counterexample_3to1} demonstrates that these structural assumptions are genuinely necessary.

We showed that the decomposition does not preserve the Maiorana--McFarland class (Theorem~\ref{thm:MM})
but that it preserves duality
(Proposition~\ref{prop:dual}), quadraticity (Corollary~\ref{cor:second_order}), and enables precise differential analysis (Theorem~\ref{thm:differential_affine}).
Our analysis of classical cipher S-boxes (Section~\ref{sec:classical-sboxes})
confirms that standard Boolean-optimized designs exhibit non-integer Walsh magnitudes, validating that the $2^\ell$-adic perspective provides genuinely new construction tools complementary to classical methodology.

Several research directions emerge. From a constructive perspective, developing
systematic algorithms to build gbent and gplateaued functions exploiting the
$2^\ell$-adic structure would enhance practical utility, particularly for
implementations where arithmetic modulo $2^\ell$ is more natural than bit operations.
The connections to algebraic number theory merit further exploration: Can gbent
properties be characterized via ideal structures in cyclotomic rings $\Z[\zeta_{2^k}]$?
Do different tower extensions $\Q(\zeta_{2^\ell}) \subset \Q(\zeta_{2^k})$ correspond
to different cryptographic properties? Extensions to $p$-adic decompositions for odd
primes $p$, to vectorial generalized bent functions, and to a deeper understanding
of when the common-argument hypothesis can be verified efficiently represent
important open directions. Finally, while Theorem~\ref{thm:landscape_iff} provides a complete
characterization of landscape functions without structural assumptions via a small subset of the family of maps $F$, a sufficiency theorem based on the \emph{affine subfamily} $\{f_\beta\}_{\beta\in(\Znl)^{r-1}}$ alone --- without additional hypotheses on the lower components --- remains open.
Example~\ref{ex:counterexample_3to1} shows such a result is impossible in full generality, but identifying the weakest sufficient condition that bridges the affine subfamily and the full family is an intriguing open problem.

The $2^\ell$-adic decomposition reveals that generalized bent, plateaued, and
landscape functions possess rich hierarchical structure previously hidden by binary
representation. By exploiting this structure through sparsity arguments and
cyclotomic field theory, we have provided both theoretical insights and practical
verification tools. The interplay between algebraic structure, spectral properties,
and cryptographic utility emerges as a central theme: necessity results show that
spectral constraints force algebraic structure, while sufficiency results demonstrate
that algebraic structure guarantees spectral properties.

\vspace{.5cm}

\noindent
\section*{Acknowledgements} The authors would like to thank the organizers of the Bent Camp 2025 at Sabanc{\i} University for making it possible for us to meet and start working on the problems we discussed in this paper.

\end{document}